\numberwithin{equation}{section}
\setlist[enumerate,1]{label={\rm(\arabic*)}, ref={\rm\arabic*}} 
\newtheorem{thm}{Theorem}[section]
\newtheorem{Theorem}[thm]{Theorem}
\newtheorem{Lemma}[thm]{Lemma}
\newtheorem{Proposition}[thm]{Proposition}
\newtheorem{Corollary}[thm]{Corollary}
\newtheorem{Claim}[thm]{Claim}
\theoremstyle{definition}
\newtheorem{Definition}[thm]{Definition}
\newtheorem{Setup}[thm]{Setup}
\theoremstyle{remark}
\newtheorem{Example}[thm]{Example}
\newtheorem{Remark}[thm]{Remark}
\newlist{propenum}{enumerate}{1}
\setlist[propenum]{label={\rm(\roman*)}, ref=\theProposition.(\roman*)}
\newlist{defenum}{enumerate}{1}
\setlist[defenum]{label=(\alph*), ref=\theDefinition.(\alph*)}
\newcommand{\lra}{\longrightarrow}
\newcommand*{\lowsim}{\vbox to 0pt{\vss\hbox{$\scriptstyle\sim$}\vskip-2pt}}
\newcommand*{\isomarrow}{\xrightarrow{\lowsim}}
\newcommand*{\longisomarrow}{\stackrel{\lowsim}{\lra}}
\newcommand*{\longhookrightarrow}{\lhook\joinrel\longrightarrow}
\newcommand{\xrightrightarrows}[2][]{\mathrel{
 \raise.40ex\hbox{$
       \ext@arrow 3095\rightarrowfill@{\phantom{#1}}{#2}$}
 \setbox0=\hbox{$\ext@arrow 0359\rightarrowfill@{#1}{\phantom{#2}}$}
 \kern-\wd0 \lower.40ex\box0}}
\newcommand{\longrightrightarrows}{\xrightrightarrows{\hphantom{0pt}}}
\newcommand{\wh}{\widehat}
\newcommand{\B}{\mathbb{B}}
\newcommand{\C}{\mathbb{C}}
\newcommand{\G}{\mathbb{G}}
\newcommand{\N}{\mathbb{N}}
\renewcommand{\O}{\mathcal{O}}
\newcommand{\Q}{\mathbb{Q}}
\newcommand{\m}{\mathfrak{m}}
\newcommand{\mg}{\mathfrak{g}}
\newcommand{\im}{\operatorname{im}}
\newcommand{\End}{\operatorname{End}}
\newcommand{\GL}{\operatorname{GL}}
\newcommand{\Perf}{\operatorname{Perf}}
\newcommand{\Spec}{\operatorname{Spec}}
\newcommand{\Spa}{\operatorname{Spa}}
\newcommand{\id}{{\operatorname{id}}}
\newcommand{\cts}{{\operatorname{cts}}}
\newcommand{\an}{{\mathrm{an}}}
\newcommand{\et}{{\operatorname{\acute{e}t}}}
\newcommand{\proet}{{\operatorname{pro\acute{e}t}}}
\newcommand{\qproet}{{\operatorname{qpro\acute{e}t}}}
\newcommand{\wt}{\widetilde}
\renewcommand{\lim}{\varprojlim}
\newcommand{\cH}{{\ifmmode \check{H}\else{\v{C}ech}\fi}}
\newcommand{\tf}{[\tfrac{1}{p}]}
\newcommand{\aeq}{\stackrel{a}{=}}
\newcommand{\ad}{\mathrm{ad}}
\DeclareMathOperator{\HOM}{\mathscr{H}\hspace{-0.3em}\mathrm{\textit{om}}}
\newcommand{\Ext}{\operatorname{Ext}}
\newcommand{\coim}{\operatorname{coim}}
\newcommand{\BCH}{\mathrm{BCH}}
\newcommand{\Rep}{\mathrm{Rep}}
\newcommand{\Spf}{\operatorname{Spf}}
\newcommand{\mG}{\mathcal{G}}
\newcommand{\etqcqs}{{\operatorname{\acute{e}t,qcqs}}}
\newcommand{\Sous}{\mathrm{Sous}}
\newcommand{\Dmd}{\mathrm{Dmd}}
\newcommand{\supth}[1]{\ensuremath{#1^{\mathrm{th}}}}
\title{$\boldsymbol{G}$-torsors on perfectoid spaces}
\author{Ben Heuer}
\address{Institute for Algebra, Number Theory and Discrete Mathematics, Leibniz University Hannover, Welfengarten 1, 30167 Hannover, Germany} 
\email{ben.heuer@math.uni-hannover.de}
\begin{document}



\maketitle

\begin{prelims}

\DisplayAbstractInEnglish

\bigskip

\DisplayKeyWords

\medskip

\DisplayMSCclass

\end{prelims}


\newpage

\setcounter{tocdepth}{1}

\tableofcontents

	
\section{Introduction}
	Let $G$ be a smooth group scheme over a field $K$, and let $X$ be a $K$-scheme. Then by a well-known theorem of Grothendieck \cite[Th\'eor\`eme~11.7, Remark~11.8]{Grothendieck_BrauerIII}, the functor
	\[ \left\{ \text{$G$-torsors on $X_{\et}$}\right\}\longisomarrow \left\{\text{$G$-torsors on $X_{\mathrm{fppf}}$}\right\}\]
	is an equivalence of categories. This article studies a similar functor in $p$-adic geometry. 
	
	Let $K$ be a non-Archimedean field over $\Q_p$, and let $X$ be an adic space over $K$, considered as a diamond in the sense of Scholze \cite[Section~15]{etale-cohomology-of-diamonds}.
        Then $X$ has an \'etale topology and a much finer $v$-topology; see \cite[Section~14]{etale-cohomology-of-diamonds}.
	Let $G$ be any rigid analytic group variety over $K$, not necessarily commutative, considered as a diamond. In this article, we study the functor
	\[ \left\{ \text{$G$-torsors on $X_{\et}$}\right\}\longhookrightarrow \left\{\text{$G$-torsors on $X_{v}$}\right\},\]
	where a $G$-torsor is a sheaf with a $G$-action that is locally isomorphic to $G$ (see \Cref{s:G-torsors}). Let
	$\nu\colon X_v\to X_{\et}$ be the natural morphism of sites; then this functor is fully faithful under the mild technical assumption that $\nu_\ast \mathbb G_a=\G_a$ (\textit{e.g.}~by
        results of Scholze and Kedlaya--Liu, this holds when $X$ is a perfectoid space or a semi-normal rigid space); see \Cref{p:fully-faithful-G-torsors-et-to-v}. The question whether the functor is essentially surjective can be phrased in terms of $\nu$
	by asking whether $R^1\nu_{\ast}G$  vanishes.
	
	\medskip
	
	In this $p$-adic situation, the  analogy to Grothendieck's theorem works best when $X$ is a perfectoid space: indeed,
        in this case, torsors under $G=\G_a$ on $X_{\et}$ and $X_{v}$ agree; namely, Scholze proves that  $R\nu_{\ast}\G_a=\G_a$; see \cite[Proposition 8.8]{etale-cohomology-of-diamonds}.
	By a theorem of Kedlaya--Liu,  vector bundles on $X_{\et}$ and $X_v$ also agree; see \cite[Theorem~3.5.8]{KedlayaLiu-II} (\textit{cf.}
        \cite[Lemma 17.1.8]{ScholzeBerkeleyLectureNotes}), which is the case of $G=\GL_n$. This implies the case of linear algebraic $G$ by the Tannakian formalism; see \cite[Section~19.5]{ScholzeBerkeleyLectureNotes}.
	Our main result is the following generalisation of all of these statements.
	
	\begin{Theorem}\label{t:G-torsors-on-perfectoid-spaces}
		Let $X$ be a perfectoid space, and let $G$ be a rigid group over $K$. The functor
			\[ \left\{ \text{$G$-torsors on $X_{\et}$}\right\}\longisomarrow \left\{\text{$G$-torsors on $X_{v}$}\right\}\]
			is an equivalence of categories. If\, $G$ is commutative, then we more generally have $R\nu_{\ast}G=G$.
	\end{Theorem}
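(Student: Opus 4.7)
The plan is to reduce the theorem to Scholze's additive vanishing $R\nu_{\ast}\G_a = \G_a$ in two steps: first to torsors under a small open subgroup $U \subseteq G$, and then via a filtration of $U$ whose subquotients are powers of $\G_a$.

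Full faithfulness is already recorded in the excerpt, so one only needs essential surjectivity: every $v$-torsor under $G$ is \'etale-locally trivial. Using the companion result announced in the abstract---on any adic space $X$, any $v$-topological $G$-torsor admits a reduction of structure group to a fixed open subgroup $U \subseteq G$, \'etale-locally on $X$---I am free to replace $G$ by any sufficiently small open subgroup $U$. By the general structure theory of rigid analytic groups, one can arrange that, as a rigid space, $U \cong \B^n$ with group law given by a convergent formal group law; by $\BCH$ one can moreover identify $U$ with an open disc in its Lie algebra $\mg$, pinning down the group structure as a deformation of $\G_a^n$.

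The formal group structure then equips $U$ with a descending filtration by normal open subgroups $U = U_0 \supseteq U_1 \supseteq U_2 \supseteq \cdots$ defined by congruences modulo powers of a pseudo-uniformizer, with $\bigcap_j U_j = 1$ and with abelian subquotients $U_j/U_{j+1} \cong \G_a^n$. Scholze's theorem gives $R^i\nu_{\ast}\G_a = 0$ for $i \geq 1$ on perfectoid $X$, hence \'etale-local triviality of $v$-torsors under each $U_j/U_{j+1}$. I would then induct on $j$, using the non-abelian cohomology sequence attached to $1 \to U_j/U_{j+1} \to U/U_{j+1} \to U/U_j \to 1$ to propagate \'etale-local triviality of torsors through each layer, and finally pass to the inverse limit $U = \lim_j U/U_j$. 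The commutative statement $R\nu_{\ast}G = G$ would then follow by the same filtration argument, using the abelian long exact sequence in place of the non-abelian $H^1$ exact sequence to inductively deduce vanishing of all higher $R^i\nu_{\ast}$.

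The main obstacle I expect is the interaction between the inverse limit $U = \lim_j U/U_j$ and the cohomology: one needs a Mittag--Leffler / derived-completeness statement ensuring $H^1_v(X, U) = \lim_j H^1_v(X, U/U_j)$, together with good control over how intermediate twists by torsors in the non-abelian induction affect the layers $U_j/U_{j+1}$. On a perfectoid $X$ this should ultimately be controlled by the (almost) vanishing of higher cohomology of the structure sheaf on affinoid perfectoid covers, but ensuring it works uniformly in the non-abelian setting---where the sheaves of which one takes an inverse limit are themselves twisted by the torsors being produced---is likely the technical heart of the argument.
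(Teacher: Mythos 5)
Your overall strategy matches the paper's: reduce to torsors under a small rigid open subgroup $U\subseteq G$ (via the reduction-of-structure-group result), filter $U$ by congruence subgroups with abelian graded pieces, establish vanishing of $H^1_v$ of the graded pieces on affinoid perfectoid spaces, and conclude by a Mittag--Leffler argument in the inverse limit. Your flagging of the inverse-limit step as a genuine issue is also correct; the paper handles it with a Milnor $R^1\varprojlim$ sequence exactly as you anticipate.

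There is, however, a genuine error in the identification of the graded pieces. Writing $U\cong \B^n$ with a convergent formal group law and filtering by congruence subgroups $U=U_0\supseteq U_1\supseteq\cdots$ does produce abelian subquotients once the layers are close enough (this is the $\mathrm{BCH}$ estimate), but they are \emph{not} $\G_a^n$. They are torsion $\O^+$-modules: of the shape $(\O^+/\varpi)^n$, or with the paper's normalisation $\mg^+_k = p^k\m\,\mg^+$, of the shape $(\m\O^+/\varpi\m)^n$. One quick way to see that $\G_a^n$ cannot occur: $\G_a$ is not quasi-compact, while $U$ and every $U_j$ is a closed ball, so every subquotient is quasi-compact.

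This is not cosmetic: it means the cohomological input you need is not $R\nu_\ast\G_a=\G_a$ but the \emph{honest} (not merely almost) vanishing of $H^1_v(X,\O^+/\varpi)$-type groups on affinoid perfectoid $X$. That does not follow formally from the $\G_a$ result or from Scholze's almost acyclicity of $\O^+$; the paper explicitly flags the honest statement as non-trivial even in degree zero, and establishes it via the approximation-property machinery of \cref{s:approximation} (culminating in \cref{p:approx-property-implies-et-v} and the Corollary after it), which is the technical heart of the article. Concretely, one combines (i) the honest comparison $H^1_\et(X,\O^+/\varpi)=H^1_v(X,\O^+/\varpi)$ from the approximation argument, (ii) almost acyclicity of $\O^+/\varpi$ on affinoid perfectoid, and (iii) the $\m$-twisted filtration, whose graded pieces are of the form $\m\otimes(-)$ so the almost statement becomes an honest one. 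Your closing gesture toward ``the (almost) vanishing of higher cohomology of the structure sheaf'' is therefore in the right direction but not strong enough: almost vanishing alone would only trivialise the torsor up to almost isomorphism, which is not what the theorem asserts.

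Your worry about intermediate twists in the non-abelian induction is a reasonable a priori concern but does not arise in the paper's organisation of the argument: one only ever propagates triviality forward, showing that a $\overline G_s$-torsor whose image in $H^1_v(X,\overline G_r)$ is trivial is itself trivial, by exactness of the pointed-set sequence at the distinguished point. No twisting of the graded pieces is required.
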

        
	Our method is different to that of the aforementioned works, and also gives a new proof of Kedlaya--Liu's theorem.
	One interesting new case is $G=\GL_n(\O^+)$, which says that the categories of finite locally free $\O^+$-modules on $X_{\et}$ and $X_v$ agree. 
	
	As another application, recall that for any locally Noetherian adic space $X$ over $\Q_p$ (for example a rigid space), we have a hierarchy of topologies
	\[ X_v\lra X_\qproet\lra X_{\proet}\lra X_\et.\]
	Here $X_\qproet$ is the quasi-pro-\'etale site, which makes sense without the assumption that $X$ is locally Noetherian.
	Since the three topologies on the left are locally perfectoid, we deduce the following.

	\begin{Corollary}
		Let $X$ be any adic space over $K$; then the categories of\, $G$-torsors on $X_{v}$ and  $X_{\qproet}$ $($and $X_\proet$ if\,~$X$ is locally Noetherian$)$ are equivalent. 
	\end{Corollary}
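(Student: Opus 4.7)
The plan is to bootstrap from \cref{t:G-torsors-on-perfectoid-spaces} using the observation, noted in the excerpt, that $X_\qproet$ (and, when $X$ is locally Noetherian, also $X_\proet$) is locally perfectoid.

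First I would treat the special case where $X = U$ is itself perfectoid. The chain of morphisms of sites $U_v \to U_\qproet \to U_\et$ induces pullback functors on $G$-torsor categories
\[\{G\text{-torsors on }U_\et\} \to \{G\text{-torsors on }U_\qproet\} \to \{G\text{-torsors on }U_v\},\]
each of which is fully faithful: morphisms between $G$-torsors are determined by descent data in $G$ over a common trivialising cover, and this is computed identically in each topology since $G$ is a diamond, hence a sheaf in all three sites with the same values on common test objects. By \cref{t:G-torsors-on-perfectoid-spaces} the composite is an equivalence, so the standard categorical fact that two fully faithful functors whose composite is an equivalence must each be equivalences implies that both arrows in the chain are equivalences individually. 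In particular, $\qproet$- and $v$-torsors agree on any perfectoid space.

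For general $X$, essential surjectivity of the pullback from $X_\qproet$ to $X_v$ would then follow by descent along a perfectoid cover: given a $v$-torsor $T$ on $X$, pick a $\qproet$-cover $\{U_i \to X\}$ with each $U_i$ perfectoid. The restriction $T|_{U_i}$ is a $v$-torsor on a perfectoid space, so by the perfectoid case is trivialised by some $\qproet$-cover $\{V_{ij} \to U_i\}$. The composed family $\{V_{ij} \to X\}$ is then a $\qproet$-cover of $X$ trivialising $T$, so that the restriction of the $v$-sheaf $T$ to $X_\qproet$ is a $\qproet$-torsor whose pullback to $X_v$ recovers $T$. Fully faithfulness would follow by the same local-to-global argument: morphisms between two $\qproet$-torsors on $X$ are encoded by descent data over a common $\qproet$-trivialising cover (obtained by refining two perfectoid trivialising covers), and this descent data is identical whether interpreted in $X_\qproet$ or $X_v$. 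The pro-étale case in the locally Noetherian setting proceeds verbatim, using a perfectoid basis of $X_\proet$.

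The argument is thus almost entirely formal descent, with the only substantive input being \cref{t:G-torsors-on-perfectoid-spaces}. The main point requiring care will be to confirm that restriction of $v$-sheaves along a perfectoid $\qproet$-cover behaves well enough to carry out the descent uniformly in all three topologies; beyond this bookkeeping I do not anticipate any genuine obstacle.
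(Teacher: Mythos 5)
Your proposal is correct and follows essentially the same route as the paper, which simply notes that $X$ admits a quasi-pro-\'etale (resp.\ pro-\'etale, in the locally Noetherian case) cover by a perfectoid space and invokes \cref{t:G-torsors-on-perfectoid-spaces} together with descent. You merely spell out the formal bookkeeping (the two-out-of-three argument for fully faithful functors and the cocycle transfer along a perfectoid $\qproet$-cover) that the paper leaves implicit.
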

	
\subsection{$\boldsymbol{G}$-torsors on rigid spaces}
Now let $X$ be a rigid space. Then it is known that there are in general many more \mbox{$v$-topological} $G$-torsors than there are \'etale ones: for example, if $K$ is a complete algebraically closed extension of $\Q_p$ and $X$ is smooth over $K$, then already for $G=\G_a$, a key step in Scholze's construction of the Hodge--Tate spectral sequence is a canonical isomorphism on $X_\et$
\[R^1\nu_{\ast}\G_a=\Omega_X(-1), \]
where $\Omega_X(-1)$ is a Tate twist of the K\"ahler differentials on $X$; see  
\cite[Proposition~3.23]{Scholze2012Survey}.

It is also know for $G=\GL_n$ that \'etale vector bundles and $v$-vector bundles on $X$ are not the same; see \textit{e.g.}~\cite{heuer-v_lb_rigid} for the case of $G=\G_m$. The difference is closely related to the $p$-adic Simpson correspondence: as we explain in \cref{s:generalised-representations}, there is an equivalence
\[ \{ \text{finite locally free $\O^+$-modules on $X_{v}$}\}\longisomarrow \{\text{generalised representations on $X$}\},\]
where, following Faltings \cite{Faltings_SimpsonI}, generalised representations are compatible systems $(V_n)_{n\in \N}$ of  finite locally free  $\O^+/p^n$-modules $V_n$ on $X_{\et}$. The equivalence is similar in spirit to the equivalence between lisse $\Q_l$-sheaves on a scheme and locally free $\Q_l$-modules on the pro-\'etale site in the sense of Bhatt--Scholze \cite[Section~1]{bhatt-scholze-proetale}. The proof hinges on the following. 

\begin{Proposition}
	We have an isomorphism $R\nu_{\ast}(\O^+/p^n)=\O^+/p^n$ $($already before passing to the almost category$)$ and $R^1\nu_{\ast}\GL_n(\O^+/p^n)=1$. In particular, we have an equivalence
\[\nu^{\ast}\colon \Big\{ \begin{array}{c}\text{finite locally free}\\\text{$\O^+/p^n$-modules on $X_\et$}\end{array}\Big\}\longisomarrow \Big\{ \begin{array}{c}\text{finite locally free}\\\text{$\O^+/p^n$-modules on $X_v$}\end{array}\Big\}.\]
\end{Proposition}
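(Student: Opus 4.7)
I would prove the proposition in two steps, reducing each assertion to the perfectoid case handled by \cref{t:G-torsors-on-perfectoid-spaces}.

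For the first assertion $R\nu_\ast(\O^+/p^n)=\O^+/p^n$, the natural starting point is the short exact sequence of abelian $v$-sheaves
\[
0 \to \B^+ \xrightarrow{p^n} \B^+ \to \O^+/p^n \to 0,
\]
where $\B^+$ denotes the rigid analytic closed unit disc group (whose underlying $v$-sheaf coincides with $\O^+$). Applying $R\nu_\ast$ and unwinding the long exact sequence reduces the claim to controlling multiplication by $p^n$ on $R^i\nu_\ast\B^+$. On a perfectoid space this is the commutative case of \cref{t:G-torsors-on-perfectoid-spaces}, giving $R\nu_\ast\B^+ = \B^+$ and hence the desired vanishing directly. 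To descend this to the \'etale site of a general adic $X$, I would use that $X_v$ is locally perfectoid: choose an \'etale-local pro-\'etale perfectoid cover, and exploit that on sufficiently small \'etale opens the relevant Galois tower trivialises, so that the continuous group cohomology of $\O^+/p^n$ on the cover vanishes via a Shapiro-style argument. A \v{C}ech-to-derived spectral sequence on this cover then concludes.

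For the second assertion $R^1\nu_\ast\GL_n(\O^+/p^n)=1$, I would use the congruence filtration of $\GL_n(\O^+/p^n)$ by the normal subgroups $1 + p^i M_n(\O^+/p^n)$, whose successive graded pieces for the inner filtration are abelian additive groups isomorphic to $M_n(\O^+/p) \cong (\O^+/p)^{n^2}$. By the first assertion applied to $\O^+/p$, these graded pieces have vanishing $R^i\nu_\ast$ for $i \geq 1$; iterated nonabelian long exact sequences then give $R^1\nu_\ast(1 + pM_n(\O^+/p^n)) = 1$. The outer short exact sequence
\[
1 \to 1 + pM_n(\O^+/p^n) \to \GL_n(\O^+/p^n) \to \GL_n(\O^+/p) \to 1
\]
reduces the problem to the residual base case $R^1\nu_\ast\GL_n(\O^+/p) = 1$.

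The main obstacle I anticipate is this residual case. I would attempt it by tilting: on perfectoid covers, $\O^+/p \cong \O^{+\flat}/\varpi^p$ for a suitable pseudo-uniformiser $\varpi$, reducing to characteristic $p$, where the Frobenius on $\O^{+\flat}$ affords extra flexibility; alternatively, one can apply \cref{t:G-torsors-on-perfectoid-spaces} to the rigid group $\GL_n$ directly (giving $R^1\nu_\ast\GL_n(\O^+) = 1$ on perfectoid covers) and then reduce modulo $p$ via a congruence filtration argument run in the opposite direction. The equivalence of categories is then a formal consequence: full faithfulness from $\nu_\ast(\O^+/p^n) = \O^+/p^n$, and essential surjectivity from the vanishing of $R^1\nu_\ast\GL_n(\O^+/p^n)$.
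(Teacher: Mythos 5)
Your plan diverges substantially from the paper's proof, and there are two genuine gaps in it. The paper proves this proposition (Proposition~\ref{p:H^1-GL_n-mod-p-agrees-for-et-and-v} together with the Corollary after Proposition~\ref{p:approx-property-implies-et-v}) by establishing that $\O^+/\varpi$ and $\GL_n(\O^+/\varpi)$ satisfy the \emph{approximation property} of Definition~\ref{d:approx-property} (this is Lemma~\ref{p:GL_n(O^+/p^n)-in-proet-site}), and then invoking the general machinery of Proposition~\ref{p:approx-property-implies-et-v}, which shows that any sheaf with the approximation property has $R^m\nu_\ast F=1$ for $m\geq 1$. That machinery is the technical heart: it is a six-step tilde-limit argument using strictly totally disconnected covers, \v{C}ech approximation over affinoid tilde-limits, and a delicate induction on cohomological degree. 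Your proposal does not engage with any of this; instead it tries to bootstrap from the perfectoid statement, and the descent step is where it breaks.

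Concretely, for the first assertion: the short exact sequence $0\to\O^+\xrightarrow{p^n}\O^+\to\O^+/p^n\to 0$ and the claim $R\nu_\ast\O^+=\O^+$ on perfectoid spaces are fine, but the passage to a general sousperfectoid or rigid $X$ is the actual content. Your sketch says to choose an \'etale-local pro-\'etale perfectoid cover, note that the Galois tower ``trivialises'', and kill continuous group cohomology by Shapiro. This does not work: for a pro-finite-\'etale Galois cover $X_\infty\to X$ with profinite Galois group $N$, restricting to a smaller \'etale open only replaces $N$ by an open subgroup, which does not trivialise the tower, and $\O^+/p^n(X_\infty)$ is not an induced/coinduced module, so Shapiro gives nothing. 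The groups $H^i_{\cts}(N,\O^+/p^n(X_\infty))$ are genuinely nonzero and are exactly the sort of thing the approximation argument is designed to kill on an \'etale cover, via a mechanism (approximating $v$-covers by split \'etale covers over strictly totally disconnected spaces, then descending through tilde-limits) that is entirely absent from your sketch. Without that input, your \v{C}ech-to-derived spectral sequence has uncontrolled $E_2$-terms on the row $j=0$.

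For the second assertion, the congruence filtration reducing $\GL_n(\O^+/p^n)$ to $\GL_n(\O^+/p)$ plus abelian pieces is fine, but both of your proposed routes for the residual case $R^1\nu_\ast\GL_n(\O^+/p)=1$ have problems. The route through $R^1\nu_\ast\GL_n(\O^+)=1$ runs the nonabelian long exact sequence in the wrong direction: for the exact sequence $1\to 1+pM_n(\O^+)\to\GL_n(\O^+)\to\GL_n(\O^+/p)\to 1$, the induced sequence of pointed sets stops at $H^1(\GL_n(\O^+))\to H^1(\GL_n(\O^+/p))$ with no assertion of surjectivity, so vanishing of $R^1\nu_\ast\GL_n(\O^+)$ gives no information about $R^1\nu_\ast\GL_n(\O^+/p)$. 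The paper's logic is precisely the reverse: one first proves the mod-$p$ vanishing, and then the integral statement $R^1\nu_\ast\GL_n(\O^+)=1$ is deduced from it in Theorem~\ref{t:loc-free-O^+-modules} via Lemma~\ref{l:local-freeness--in-the-limit-in-the-v-topology}. In particular, citing Theorem~\ref{t:loc-free-O^+-modules} here would be circular. The tilting alternative also does not help: the paper's proof of the mod-$\varpi$ statement is characteristic-agnostic (it is stated for an arbitrary non-archimedean field and pseudo-uniformiser), so passing to characteristic $p$ buys you nothing, and you do not indicate what would happen there. In short, the residual case needs a genuinely new idea, and in the paper that idea is the approximation property.
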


One reason why we are interested in $v$-topological $G$-torsors under general rigid groups $G$ is for generalisations of the $p$-adic Simpson correspondence to more general non-abelian coefficients, as explored in \cite{heuer-sheafified-paCS}: these relate $v$-topological $G$-bundles to $G$-Higgs bundles. The relevance of \cref{t:G-torsors-on-perfectoid-spaces} in this context is that it shows that $v$-topological $G$-bundles are ``locally small''; namely, they admit reductions of structure groups to small open subgroups.

\subsection{Reduction of structure group}
The technical heart of this article is the study of sheaves $F$ that ``commute with tilde-limits''. 

\begin{Definition}
	A sheaf $F$ on the big \'etale site of sousperfectoid spaces over $K$ is said to satisfy the \emph{approximation property} if for any affinoid perfectoid tilde-limit $X\sim \varprojlim_{i\in I} X_i$ of affinoid spaces $X_i$ such that $\varinjlim_{i\in I} \O(X_i)\to \O(X)$ has dense image, $F(X)=\varinjlim_{i\in I} F(X_i)$.
\end{Definition}

Examples of such sheaves include $\O^+/p^n$ and thus also $\GL_n(\O^+/p^n)$.  We show the following. 

\begin{Theorem}
		Let $F$ be a sheaf of groups satisfying the approximation property. Then $F$ is already a $v$-sheaf and 
			$R^1\nu_{\ast} F=1$. If\, $F$ is a sheaf of abelian groups, then $R\nu_{\ast} F=F$.	
\end{Theorem}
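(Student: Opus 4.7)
The plan is to exploit the approximation property to reduce $v$-topological questions on $F$ to étale ones, via tilde-limit presentations of pro-étale covers of affinoid perfectoid spaces. The central geometric input is that, for any affinoid perfectoid $X$, every $v$-cover can be refined by an affinoid perfectoid quasi-pro-étale cover $Z \to X$ presented as $Z \sim \varprojlim_i Z_i$ with each $Z_i \to X$ an étale cover by an affinoid sousperfectoid, and the iterated fibre products $Z^n := Z \times_X \cdots \times_X Z$ inherit compatible tilde-limit presentations $Z^n \sim \varprojlim_i Z_i^n$ satisfying the dense-image hypothesis on functions.

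First I would check that $F$ is a $v$-sheaf. It suffices to verify $v$-descent for $Z \to X$ as above. Termwise application of the approximation property to the \v{C}ech nerve gives $F(Z^{\bullet}) = \varinjlim_i F(Z_i^{\bullet})$. Since $F$ is an étale sheaf, each $F(Z_i^{\bullet})$ is exact in degrees $0$ and $1$ by ordinary étale descent, and filtered colimits preserve this exactness. For the non-abelian statement $R^1 \nu_{\ast} F = 1$, I would take a $v$-$F$-torsor $T$ on an affinoid perfectoid $X$ and pick a trivialising $v$-cover refined by $Z \sim \varprojlim_i Z_i$. The transition cocycle lies in $F(Z \times_X Z)$ and, by approximation, descends to some $F(Z_i \times_X Z_i)$; after enlarging $i$, the cocycle identity in $F(Z \times_X Z \times_X Z)$ descends as well. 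The cocycle then defines an étale $F$-torsor $T_i$ on $X$ with $\nu^{\ast} T_i \cong T$, and since étale torsors trivialise étale-locally by definition, so does $T$.

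For the abelian case, I would upgrade to a hypercover argument to handle higher cohomology. Any class in $H^n(X_v, F)$ is representable on a $v$-hypercover, which can be refined by a pro-étale hypercover $Z_{\bullet} \sim \varprojlim_i Z_{i,\bullet}$ presented as a tilde-limit of étale hypercovers. Termwise approximation gives $F(Z_{\bullet}) = \varinjlim_i F(Z_{i,\bullet})$, and since cohomology of complexes commutes with filtered colimits, $H^n(F(Z_{\bullet})) = \varinjlim_i H^n(F(Z_{i,\bullet}))$. Passing to cofinal systems of hypercovers yields the key identification $H^n(X_v, F) = H^n(X_{\et}, F)$; since the presheaf $U \mapsto H^n(U_{\et}, F)$ sheafifies to zero on $X_{\et}$ for $n \geq 1$, the same follows for $H^n(X_v, F)$, giving $R^n \nu_{\ast} F = 0$ and hence $R\nu_{\ast} F = F$.

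The main obstacle I anticipate lies in the construction of the tilde-limit hypercover: one must ensure that pro-étale hypercovers presented compatibly as tilde-limits of étale hypercovers form a cofinal system among $v$-hypercovers, and that every fibre product in the nerve satisfies the dense-image hypothesis required by the approximation property. The cleanest way to guarantee the latter is probably to build the hypercover inductively by choosing, at each level, a strictly totally disconnected affinoid perfectoid cover presented as a tilde-limit, since this keeps the analysis inside the regime where the approximation property applies and where $v$-covers split --- the latter greatly simplifying the bookkeeping of \v{C}ech-to-derived degeneration on the fibre products $Z^n$.
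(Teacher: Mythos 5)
Your central geometric claim --- that ``every $v$-cover can be refined by an affinoid perfectoid quasi-pro-\'etale cover $Z\to X$ presented as $Z\sim\varprojlim_i Z_i$ with each $Z_i\to X$ an \'etale cover'' --- is false, and this gap is fatal to the argument as written. The problem is already visible for $X$ strictly totally disconnected: such an $X$ has essentially no nontrivial \'etale covers (they all split), yet it has plenty of nontrivial $v$-covers, e.g.\ $\Spa(C')\to\Spa(C)$ for a proper extension of algebraically closed perfectoid fields. No pro-\'etale cover refines such a $v$-cover. Since strictly totally disconnected spaces are precisely the base case one must handle to run an induction over pro-\'etale covers, the refinement you propose to rely on cannot be set up there. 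The hypercover version has the same problem: a $v$-hypercover of a strictly totally disconnected space cannot in general be refined by a pro-\'etale one.

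What the paper actually does around this obstacle is the key idea you are missing. For strictly totally disconnected $X$, given a $v$-cover $Y\to X$ by an affinoid perfectoid, one approximates $Y\sim\varprojlim Y_i$ by sousperfectoid spaces $Y_i$ rational open in $\mathbb B^d\times X$ (\cref{l:v-cover-approximate-by-balls}), and the crucial point is that each $Y_i\to X$ \emph{admits a splitting} even though $Y_i\to X$ is \emph{not} an \'etale cover of $X$. One can thus compare \v{C}ech cohomology for $Y\to X$ with the split $Y_i\to X$. Because $Y_i\to X$ is not a cover in the \'etale topology, one cannot directly conclude that a \v{C}ech cocycle for $Y_i\to X$ lands in $F(X)$; the paper introduces an auxiliary ``smooth'' topology $X_{\mathrm{sm}}$ (covers are \'etale maps composed with ball projections) to make $Y_i\to X$ a cover and then uses that on a strictly totally disconnected $X$ every smooth cover is refined by an \'etale one, so the smooth and \'etale sheafifications agree. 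Only after establishing the result for strictly totally disconnected $X$ does one pass to general sousperfectoid $X$ by approximating the strictly totally disconnected pro-\'etale cover $\widetilde X\sim\varprojlim X_i$ of \cref{l:proet-Colmez-cover}. Without the split-but-not-\'etale approximants and the smooth-topology bridge (or some substitute), both your $v$-sheaf argument and your $R^1$/higher-cohomology arguments break at the strictly totally disconnected base case.
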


The relevance to our study of $G$-torsors is then the following.

\begin{Proposition}\label{prop16}
	Let $G$ be any rigid group. Let $U\subseteq G$ be any rigid open subgroup, not necessarily normal. Then $G/U$ satisfies the approximation property.
\end{Proposition}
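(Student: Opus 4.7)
The plan is to deduce the approximation property for $G/U$ from the approximation property for $G$ itself, crucially using that $U \subseteq G$ is open.

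As a preliminary, I would first establish approximation for $G$. Since $G$ is a rigid analytic group, it is covered by affinoid opens of topologically finite type. A section of such an affinoid over $Y$ is given by finitely many functions in $\O^+(Y)$ satisfying polynomial relations, and density of $\varinjlim \O(X_i) \to \O(X)$ produces approximate solutions over some $X_j$; smoothness of $G$ over $K$ then lets one pass from approximate to exact solutions by an Elkik-style lifting in the sousperfectoid setting, yielding $G(X) = \varinjlim G(X_i)$. The same argument applies to the open subgroup $U$ itself.

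The second ingredient is a topological lemma: if $f \in G(X)$ factors through $U$, then for any lift $f_j \in G(X_j)$ there exists $k \geq j$ such that $f_j|_{X_k}$ factors through $U$. Indeed, openness of $U$ makes $Z_j := |X_j| \setminus f_j^{-1}(|U|)$ a closed subset of $|X_j|$, and its preimages $Z_k \subseteq |X_k|$ for $k \geq j$ form a cofiltered inverse system of closed spectral subspaces whose limit equals the preimage of $Z_j$ in $|X| = \varprojlim |X_k|$. This preimage is empty since $f$ factors through $U$, and because inverse limits of nonempty spectral spaces along spectral maps are nonempty, some $Z_k$ itself must be empty.

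With these ingredients, approximation for $G/U$ follows. Given $s \in (G/U)(X)$, étale-locally on the affinoid perfectoid $X$ we obtain a lift $g \in G(X')$ for some étale cover $X' \to X$. By standard approximation for étale covers of affinoid perfectoid tilde-limits, we may arrange $X' \sim \varprojlim X'_i$ to arise by base change from an étale cover $X'_{i_0} \to X_{i_0}$. The descent datum $p_1^*g \cdot p_2^*g^{-1} \in G(X' \times_X X')$ lies in $U$. Lifting $g$ to $g_j \in G(X'_j)$ via approximation for $G$ and applying the topological lemma to $p_1^*g_j \cdot p_2^*g_j^{-1}$ over the affinoid perfectoid tilde-limit $X' \times_X X' \sim \varprojlim X'_j \times_{X_j} X'_j$, this cocycle factors through $U$ for $j$ large. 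Hence $g_j$ descends to a section $s_j \in (G/U)(X_j)$ mapping to $s$. Injectivity is analogous, lifting the $U$-valued difference of two lifts and applying approximation for $U$. The main obstacle is the topological lemma: without openness of $U$, the descent cocycle cannot be forced into $U$ at a finite stage.
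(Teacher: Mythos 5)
Your argument contains a genuine gap: the preliminary claim that $G$ itself satisfies the approximation property, i.e.\ that $G(X) = \varinjlim_i G(X_i)$, is false. Already for $G = \G_a$ this would say $\O(X) = \varinjlim_i \O(X_i)$, but the hypothesis of the tilde-limit only gives that this map has \emph{dense} image, not that it is surjective. Concretely, for $X = \Spa(\C_p\langle T^{1/p^\infty}\rangle)$ with $X_i = \Spa(\C_p\langle T^{1/p^i}\rangle)$, an element such as $\sum_n p^n T^{1/p^n} \in \O^+(X)$ does not lie in $\varinjlim \O^+(X_i)$. The same problem persists for any positive-dimensional $G$ and for $U$ (so your injectivity argument, which invokes ``approximation for $U$'', inherits the same defect). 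The Elkik-style (cotangent complex) lifting you invoke does not give an exact lift of a given $g \in G(X)$ to some $G(X_j)$: it produces, for each $k$, some $g_j \in G(X_j)$ whose pullback to $X$ agrees with $g$ only modulo $p^{k-t}$. Turning such an \emph{approximate} lift into an \emph{equality} in $G/U$ is precisely where openness of $U$ must be used, and this step is absent from your write-up.

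That said, the underlying philosophy is sound and is essentially what the paper does: one produces approximate lifts via the cotangent-complex lifting of Illusie and Gabber--Ramero (this is the rigorous version of your ``Elkik-style lifting''), and then shows that the difference $x^{-1}x_k$ between the original section and its approximation lands in $U(X)$ for $k \gg 0$ because it tends to the identity and $U$ is open; the injectivity part and your ``topological lemma'' using spectral spaces and StacksProject Tag 0A2W is exactly the paper's argument. So the fix is local: replace ``exact lift'' by ``lift agreeing modulo $p^k$'', and insert the key observation that for $k$ large enough the resulting error section lands in $U$. Also note that the paper works directly with $\varinjlim G(X_i) \to G(X)/U(X)$ and afterwards sheafifies, which avoids the extra étale-cover layer and cocycle bookkeeping you introduce; your route, once the approximation step is corrected, would also close but is somewhat longer.
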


For example, both $\O^+/p^n$ and $\GL_n(\O^+/p^n)$ are of this form.
We use Proposition~\ref{prop16} to show that any $G$-torsor on $X$ admits a reduction of structure group to $U$ on an \'etale cover. 

\begin{Theorem}\label{p:reduction-of-structure-group-intro}
	Let $G$ be a rigid group over $K$, and let $U\subseteq G$ be a rigid open subgroup. Let $X$ be a sousperfectoid space over $K$, and let $\nu\colon X_v\to X_{\et}$ be the natural morphism of sites. Then the natural map
	\[ R^1\nu_{\ast}U\lra R^1\nu_{\ast}G\]
	is surjective. If\, $G$ is commutative, we more generally have 
	$R^k\nu_{\ast}U= R^k\nu_{\ast}G$
	for all $k\geq 1$.
\end{Theorem}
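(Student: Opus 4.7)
The plan is to derive this from the short exact sequence of $v$-sheaves
\[ 1 \to U \to G \to G/U \to 1 \]
together with the preceding Proposition, which gives that $G/U$ satisfies the approximation property, so that by the preceding Theorem it is a $v$-sheaf with $R^1\nu_{\ast}(G/U) = 1$ (and, in the commutative case, $R\nu_{\ast}(G/U) = G/U$).

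Applying $R\nu_{\ast}$ yields the usual five-term exact sequence of pointed sheaves on $X_\et$,
\[ \nu_{\ast}G \to \nu_{\ast}(G/U) \to R^1\nu_{\ast}U \to R^1\nu_{\ast}G \to R^1\nu_{\ast}(G/U), \]
and the vanishing of the last term forces $R^1\nu_{\ast}U \to R^1\nu_{\ast}G$ to be surjective. Geometrically, this amounts to the following: given a $G$-torsor $P$ on $Y_v$ for some étale $Y \to X$, form the associated twisted sheaf $Q := P \times^{G}(G/U)$, a $v$-sheaf that is $v$-locally isomorphic to $G/U$. Then ``$R^1\nu_{\ast}(G/U)=1$'' means that $Q$ admits a section after passing to an étale cover of $Y$, and such a section is precisely a reduction of structure group of $P$ to $U$.

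In the commutative case everything is abelian, so the long exact sequence continues indefinitely. Since $R^k\nu_{\ast}(G/U) = 0$ for all $k \geq 1$, I immediately read off $R^k\nu_{\ast}U \isomarrow R^k\nu_{\ast}G$ for all $k \geq 2$. For $k = 1$, surjectivity is already established; for injectivity I need the boundary map $\nu_{\ast}(G/U) \to R^1\nu_{\ast}U$ to vanish, equivalently $\nu_{\ast}G \to \nu_{\ast}(G/U)$ to be surjective on $X_\et$. This follows because $G \to G/U$ is étale surjective as a morphism of rigid spaces ($U \subseteq G$ being open), so sections of $G/U$ lift étale-locally to sections of $G$.

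The main technical obstacle is giving the non-abelian five-term sequence a precise meaning when $U$ is not normal in $G$, since then $G/U$ is only a sheaf of $G$-sets and the preceding Theorem is stated for sheaves of groups. I would work around this by interpreting ``$R^1\nu_{\ast}(G/U) = 1$'' directly as the statement that any $v$-sheaf which is $v$-locally isomorphic to $G/U$ as a $G$-set becomes isomorphic to $G/U$ on an étale cover; this is the right-shaped consequence of the approximation property, and, applied to the twisted form $Q = P \times^G (G/U)$ of a $G$-torsor $P$ on $X_v$, it gives the required reduction to $U$ étale-locally on $X$, which is exactly the surjectivity of $R^1\nu_{\ast}U \to R^1\nu_{\ast}G$.
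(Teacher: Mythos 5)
Your approach is the same as the paper's, down to using the twisted coset sheaf $\overline{E}=E\times^G(G/U)$ and the observation that a section of $\overline{E}$ over an étale cover is equivalent to a reduction of structure group of $E$ to $U$ (the paper's Lemma~\ref{l:section-of-fibre-bundle}). Your commutative boundary-map argument for injectivity at $k=1$, via surjectivity of $\nu_\ast G\to\nu_\ast(G/U)$, is also correct and slightly more explicit than what the paper writes down.

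There is, however, a real gap in your final paragraph. You claim that the statement ``any $v$-sheaf which is $v$-locally isomorphic to $G/U$ as a $G$-set becomes isomorphic to $G/U$ on an étale cover'' is ``the right-shaped consequence of the approximation property.'' It is not a formal consequence: \cref{p:approx-property-implies-et-v} yields $R^1\nu_\ast F=1$ only for sheaves of \emph{groups} $F$ (parts 2 and 3), while for the sheaf of \emph{sets} $G/U$ it supplies only the $v$-sheaf property (part 1). Proving that a twisted form of $G/U$ acquires a section étale-locally needs its own argument, which the paper gives in Lemma~\ref{l:section-over-et-cover}: one first reduces to strictly totally disconnected $X$, trivializes $E$ on an affinoid perfectoid $v$-cover $Y\to X$, writes $\overline{E}(Z)$ as an equalizer involving $G/U$ evaluated on Čech fibre products of $Y$, approximates $Y$ and its fibre products by sousperfectoid spaces via \cref{l:v-cover-approximate-by-balls} and \cref{l:product-of-tilde-limits}, and only then invokes the approximation property of $G/U$ (\cref{p:approx-property-for-G/U}) to conclude $\overline{E}(Y)=\varinjlim\overline{E}(Y_i)$, so that a section descends to some split $Y_i\to X$; the general $X$ then follows by a second round of approximation along the pro-étale cover of \cref{l:proet-Colmez-cover}. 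Your proof needs to supply this argument rather than cite the proposition.
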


For non-commutative $G$, the map $R^1\nu_{\ast}U\to R^1\nu_{\ast}G$ is not in general an isomorphism.

The idea for the proof of  \cref{t:G-torsors-on-perfectoid-spaces} is now that by the theory of $p$-adic Lie groups, there is a large supply of open subgroups of $G$. If $G$ is commutative, then these are isomorphic to open subgroups of the Lie algebra of $G$ via the exponential, and one can deduce the result from the case of $\G_a$. In general, the exponential does not respect the group structure, but the relation to the Lie algebra suffices to trivialise $G$-torsors \'etale-locally by inductive lifting.

\medskip

Conceptually speaking, the main aim of this work is to launch a systematic study of \mbox{$G$-torsors} on adic spaces with a view towards generalisations and reformulations of the \mbox{$p$-adic} Simpson correspondence. Apart from the above results, we therefore prove some further foundational results  on $G$-torsors. We continue our study in \cite{heuer-sheafified-paCS}, where based on the results of this article, we give an explicit description of the sheaf $R^1\nu_{\ast}G$ on smooth rigid spaces, and use this to construct analytic moduli spaces of $G$-torsors on rigid spaces.

\section*{Setup and notation}
Throughout, let $p$ be a prime and $K$ a non-Archimedean field of residue characteristic $p$. We fix a subring $K^+\subseteq K$ of integral elements, which we often drop from notation. Let $\O_K$ be the ring of integers of $K$ and $\mathfrak m_K$ its maximal ideal. We also denote this by $\mathfrak m$ if $K$ is clear from context. For any $\epsilon>0$ for which there exists an element $a\in K$ with $|a|=|p|^\epsilon$, we denote by $p^\epsilon \mathfrak m_K$ the ideal of $\O_K$ which is the image of $\mathfrak m_K\subseteq \O_K$ under the multiplication by $a$ morphism $a\cdot \colon \O_K\to \O_K$.  This only depends on $\epsilon$: indeed, it is explicitly given by
$p^\epsilon\mathfrak m_K=\{x\in K \text{ s.t.\ }|x|<|p|^{\epsilon}\}$. We note that the right-hand side makes sense for any $\epsilon>0$, so by a mild abuse of notation, we can use this as a definition for $p^\epsilon\mathfrak m_K$ for any $\epsilon>0$.

 By an adic space over $K$, we mean an adic space over $\Spa(K,K^+)$ in the sense of Huber. All adic spaces occurring in this article will be adic spaces over $K$; in particular, they will automatically be Tate.
  For any adic space $X$, we use the \'etale site $X_{\et}$ in the sense of Kedlaya--Liu \cite[Definition 8.2.16]{KedlayaLiu-rel-p-p-adic-Hodge-I}.
By a rigid space over~$K$, we mean an adic space locally of topologically finite type over $\Spa(K,K^+)$. One example we use several times is the closed ball $\B^d=\Spa(K\langle T_1,\dots,T_d\rangle)$. We use perfectoid spaces in the sense of \cite{perfectoid-spaces}, as well as Scholze's category of diamonds; see  \cite{etale-cohomology-of-diamonds}.
Most adic spaces we consider throughout will be sousperfectoid in the sense of \cite[Section~6.3]{ScholzeBerkeleyLectureNotes} and~\cite{HK_sheafiness}: examples of such are smooth rigid spaces, perfectoid spaces, and products thereof. 

Throughout, we will want to work with adic spaces which are locally given by affinoid adic spaces $\Spa(A,A^+)$ such that for any finite \'etale morphism of Huber pairs $(A,A^+)\to (B,B^+)$, the pair $(B,B^+)$ is again sheafy. In order to express this, we follow Kedlaya--Liu and work with pre-adic spaces in the sense of \cite[Definition~8.2.3]{KedlayaLiu-rel-p-p-adic-Hodge-I}. Let us say that an adic space $X$ is \'etale-sheafy if for any pre-adic space $Y$ with an \'etale morphism of pre-adic spaces $Y\to X$, the space $Y$ is also adic.
For example, when $X$ is sousperfectoid, any pre-adic space $Y\to X$ is even a sousperfectoid adic space; hence sousperfectoid spaces are \'etale-sheafy. 

For any adic space $X$ over $K$, we consider the associated locally spatial diamond $X^\diamondsuit$ in the sense of \cite[Section~15]{etale-cohomology-of-diamonds}.
  As we have fixed a structure map to $K$, we may regard $X^\diamondsuit$ as a sheaf on the $v$-site $\Perf_{K,v}$ of perfectoid spaces over $K$. Then diamondification identifies the \'etale sites, see \cite[Lemma 15.6]{etale-cohomology-of-diamonds}, of $X$ and $X^\diamondsuit$. We may therefore often switch back and forth between $X$ and its associated diamond $X^\diamondsuit$. However,  there is one subtlety: on the diamantine side, we have a structure sheaf $\O$ on $X^\diamondsuit$ induced from that on $\Perf_{K,v}$.  We caution that we currently do not in general know whether the equivalence $X^\diamondsuit_\et=X_\et$ identifies the structure sheaves on both sides for sousperfectoid $X$, but this is true when $X$ is a perfectoid space or a semi-normal rigid space (see \cite[Theorem 8.7]{etale-cohomology-of-diamonds}, \cite[Proposition 10.2.3]{ScholzeBerkeleyLectureNotes}).

  We denote by $\Sous_{K}$  the category of sousperfectoid spaces over $K$.  We make this into a big \'etale site $\Sous_{K,\et}$ by equipping it with the \'etale topology. Second, we can also make sense of $v$-sheaves on $\Sous_{K}$: we define a morphism $X\to Y$ in $\Sous_{K}$ to be a $v$-cover if the associated morphism $X^\diamondsuit\to Y^\diamondsuit$ is. This does not make $\Sous_{K}$ into a site because the fibre product $X^\diamondsuit\times_{Y^\diamondsuit}X^\diamondsuit$ may not be represented by an object of $\Sous_{K}$. But this fibre product is still a diamond; hence it is covered by a perfectoid space in $\Sous_{K}$, which will suffice to define what it means to be a $v$-sheaf on $\Sous_{K}$ (see \Cref{d:approx-property}\eqref{d:ap-2}). We denote by $\mathrm{Dmd}_{K,v}$ the larger category of diamonds over $K$ equipped with the $v$-topology. As any diamond admits a $v$-cover by a perfectoid space, the categories of $v$-sheaves on $\Perf_{K,v}$, $\Sous_{K}$ and $\mathrm{Dmd}_{K,v}$ are all equivalent.

\subsection*{Acknowledgements}
We thank Johannes Ansch\"utz, Gabriel Dospinescu, Arthur-C\'esar Le Bras, Lucas Gerth, Ian Gleason, Lucas Mann, Peter Scholze, Annette Werner, Daxin Xu and Bogdan Zavyalov for very helpful conversations. We thank the referee for very helpful comments on an earlier version.
  
\section{Generalised representations on rigid spaces}\label{s:generalised-representations}

Before we discuss $G$-torsors for general rigid groups $G$, we study in this section the simpler case of \mbox{$v$-vector} bundles, that is, $\GL_n$-torsors on $X_v$. While this simplifies the setting, the overall line of argument is already the same as in the general case.
We begin by giving an alternative description of $v$-vector bundles that is of interest in the $p$-adic Simpson correspondence: the equivalence between $v$-vector bundles and Faltings' generalised representations.

\subsection{Generalised representations}

We start by adapting generalised representations as defined by Faltings, see \cite[Section~2 and Theo\-rem~3]{Faltings_SimpsonI}, from the algebraic setting of log schemes to an analytic setting: throughout this section, let $K$ be any non-Archimedean extension of $\Q_p$, and let $X$ be any \'etale-sheafy adic space over $K$. We recall that ``\'etale-sheafy'' was defined in the previous section and means that for any \'etale morphism of pre-adic spaces $Y\to X$ in the sense of \cite[Definition 8.2.16]{KedlayaLiu-rel-p-p-adic-Hodge-I}, the pre-adic space $Y$ is an adic space.

\begin{Definition}\leavevmode
	\begin{enumerate}
		\item  A \emph{generalised representation} on $X$ is a system $(M_k)_{k\in \N}$ of finite locally free $\O^+/p^k$ modules $M_k$ on $X_{\et}$ with isomorphisms $M_{k+1}/p^k\isomarrow M_k$ for all $k$. 
		
		\item The isogeny category is the localisation of the category of generalised representations on $X$ at multiplication by $p$.
		A \emph{generalised $\Q_p$-representation} on $X$ is a system of generalised representations $(M_{U_i})_{U_i\in \mathfrak U}$ on some \'etale cover $\mathfrak U$ of $X$ with isomorphisms $M_{U_i|U_i\times_X U_j}\to M_{U_j|U_j\times_X U_i}$ in the isogeny category that satisfy the cocycle condition.
	\end{enumerate}
\end{Definition}

\begin{Remark}
	The name ``generalised representation'' stems from the following observation: fix any base-point $x\in X(\overline{K})$, and consider  the \'etale fundamental group $\pi_1(X):=\pi^{\et}_1(X,x)$. Then to any semilinear continuous representation $\rho\colon \pi_1(X)\to \GL(E)$ on a finite free $\O_{K}$-module $E$, we can associate a generalised representation: namely, by continuity, the reduction $\rho_k\colon \pi_1(X)\to \GL(E/p^k)$ factors through a finite quotient of $\pi_1(X)$, corresponding to a finite \'etale cover $X'\to X$. We can then regard $\rho_k$ as a descent datum for a finite locally free $\O^+/p^k$-module $M_k$ along $X'\to X$, by defining $M_k$ on any $U\in X_{\et}$ to be
	\[ M_k(U):=\left\{x\in E(U\times_XX')/p^k\mid \gamma^\ast x= \rho^{-1}(\gamma)x \text{ for all }\gamma\in \pi_1(X)\right\}.\]
	The system $(M_k)_{k\in \N}$ then defines a generalised representation in the above sense.
\end{Remark}

The main goal of this short section is to prove the following equivalent characterisation.

\begin{Proposition}\label{t:generalised-reps-are-v-bundles}
  Let $X$ be any \'etale-sheafy adic space over a non-Archimedean field $K$ over $\Q_p$  $($e.g.\ $X$ can be any rigid spaces or sousperfectoid\,$)$.
	\begin{enumerate}
		\item\label{t:gravb-1} The morphism of sites $\nu\colon X_{v}\to X_{\et}$ induces a natural equivalence of categories
		\begin{align*}
			\{ \text{finite locally free $\O^+$-modules on $X_{v}$}\}&\lra \{ \text{generalised representations on $X$}\}\\
			V&\longmapsto \left(\nu_{\ast}\left(V/p^k\right)\right)_{k\in\N}.
		\end{align*}
		Moreover, these are equivalent to the category of finite locally free $\O^+$-modules on $X_{\qproet}$, and if\, $X$ is locally Noetherian also to that of finite locally free $\O^+$-modules on $X_{\proet}$.
		\item\label{i:generalised-Qp-reps} By localising at multiplication by $p$, this defines an equivalence of categories
		\begin{align*}
			\{ \text{finite locally free $\O$-modules on $X_{v}$}\}&\lra \{ \text{generalised $\Q_p$-representations on $X$}\}.
		\end{align*} 
		Moreover, these are equivalent to the category of finite locally free $\O$-modules on $X_{\qproet}$, and if\, $X$ is locally Noetherian also to that of finite locally free $\O$-modules on $X_{\proet}$.
	\end{enumerate}
\end{Proposition}

\begin{Definition}
	We also call a finite locally free $\O$-module on $X_{v}$  a \emph{$v$-vector bundle}. We then call finite locally free $\O$-modules on $X_{\et}$ ``\'etale vector bundles'' to clarify the topology.
\end{Definition}

From this perspective, one could reverse-engineer Faltings' definition of generalised $\Q_p$-representations by saying that they describe $v$-vector bundles purely in terms of $X_{\et}$.

\begin{Remark}
  The idea that generalised representations on a rigid space $X$ are vector bundles on $X_{\proet}$  is mentioned or implicit in other works:  in the arithmetic setting of smooth rigid spaces over discretely valued fields, it is hinted at by Liu--Zhu \cite{LiuZhu_RiemannHilbert} (see Remark~2.6). In more general analytic  settings, the idea appears in the works of W\"urthen \cite[Introduction, p.~4006]{wuerthen_vb_on_rigid_var} and Mann--Werner \cite{MannWerner_LocSys_p-adVB}, and a related result appears in the work of Morrow--Tsuji (\textit{cf.} \cite[Theorem 5.7]{MorrowTsuji}) in a good reduction setting. In more algebraic-geometric settings, related statements have been shown by Xu  \cite[Section~3.29]{XuTransport_parallele} and Yang--Zuo \cite[Lemma~2.10]{YangZuo-padicSimpson}.
	
	 \cref{t:generalised-reps-are-v-bundles} now provides a result in the full generality of adic spaces over $K$, free from any choice of integral model, or additional assumptions on $X$ or $K$. We note that this analytic setting introduces some additional subtleties due to the difference between $\O^+/p^k$-modules on $X_{\et}$ and modules on the reduction mod $p^k$ of some given formal model.
	
	The step to finite locally free $\O^+$-modules in the $v$-topology is newer, although we build on work of \cite{MannWerner_LocSys_p-adVB}. This step is relevant because $X_{\proet}$  is only defined for locally Noetherian $X$, while the $v$-topology is available more generally, which is useful even for rigid spaces, \textit{e.g.}~in the relative setting studied in \cite{heuer-sheafified-paCS}.
\end{Remark}

The proof will take up the entire section as we will also discuss some related topics to prepare the more general version for $G$-torsors.
The basic idea is to compare locally free $\O^+/p$-modules for various topologies. For this we can relax the setup of this section. 

\medskip

Let $K$ be any non-Archimedean field, not necessarily over $\Q_p$, and let $\varpi\in \O_K$ be any pseudo-uniformiser of $K$. The following proposition is the technical heart of this section. 

\begin{Proposition}\label{p:H^1-GL_n-mod-p-agrees-for-et-and-v}
	Let $X$ be an adic space over $K$ which is \'etale-sheafy.  Then for $m=0,1$ the map
	\begin{equation}\label{eq:H^mGL}
	H^m_{\et}(X,\GL_r(\O^+/\varpi))\longisomarrow H^m_{v}(X,\GL_r(\O^+/\varpi))
	\end{equation}
	is an isomorphism for all $r\in \N$. The analogous statement holds for $\GL_r(\O^+/\varpi\m)$.  
	
	In particular, the functor
	\[\nu^{\ast}\colon \Big\{ \begin{array}{c}\text{finite locally free}\\\text{$\O^+/\varpi$-modules on $X_\et$}\end{array}\Big\}\lra \Big\{ \begin{array}{c}\text{finite locally free}\\\text{$\O^+/\varpi$-modules on $X_v$}\end{array}\Big\}\]
	is an equivalence of categories, where $\nu\colon X_v\to X_{\et}$ is the natural morphism.
 \end{Proposition}

\begin{proof}
	To see the isomorphism  \eqref {eq:H^mGL} for $m=0$, it suffices to see that $\nu_{\ast}\GL_r(\O^+/\varpi)=\GL_r(\O^+/\varpi)$.
	
	Second, to see that the functor $\nu^\ast$ is fully faithful, it suffices to see that $\nu_{\ast}(\O^+/\varpi)=\O^+/\varpi$, \textit{i.e.} that~$H^0_{\et}(X,\O^+/\varpi)=H^0_{v}(X,\O^+/\varpi)$
	is an (honest, not just almost) isomorphism:
	indeed, for any two finite locally free $\O^+/\varpi$-modules $E_1$ and $E_2$ on $X_{\et}$, we have $\HOM(\nu^{\ast}E_1,\nu^{\ast}E_2)=\nu^{\ast}\HOM(E_1,E_2)$, and locally where the locally free $\O^+/\varpi$-module $\HOM(E_1,E_2)$ becomes trivial, the statement then follows.
	
	Third, to see the essential surjectivity, it suffices to see that $R^1\nu_{\ast}\GL_r(\O^+/\varpi)=1$. By passing to isomorphism classes, this will also imply the isomorphism \eqref{eq:H^mGL} for $m=1$.
	
	All of these three statements, and their analogues for $\O^+/\varpi\m$, will be proved by \Cref{p:approx-property-implies-et-v} below. Namely, they follow by approximation arguments, which we now axiomatise and discuss in detail in the following subsection.
	
	\subsection{Approximation of non-abelian sheaf cohomology}\label{s:approximation}
	For the proof of \cref{p:H^1-GL_n-mod-p-agrees-for-et-and-v}, we will use ideas from \cite[Section~3]{Scholze_p-adicHodgeForRigid}, \cite[Proposition~14.7]{etale-cohomology-of-diamonds}, \cite[Section~2]{MannWerner_LocSys_p-adVB}, \cite[Theorem 2.18]{heuer-Picard-good-reduction}. We  shall axiomatise the argument as we will apply it several times in the following section. Throughout this subsection, $K$ is any non-Archimedean field.
	We begin by recalling some properties of affinoid tilde-limits of adic spaces.
        
	\begin{Definition}[\textit{cf.} {\cite[Equation~(2.4.1)]{huber2013etale} and \cite[Section~2.4]{ScholzeWeinstein}}]\label{d:tilde-limits}
		Let  $(X_i)_{i\in I}$ be a cofiltered inverse system of affinoid adic spaces over $K$, and let $X$ be an affinoid adic space with compatible maps $X\to X_i$ for all $i\in I$. We write
		\[ X\sim \varprojlim_{i\in I} X_i\]
		if $|X|=\varprojlim |X_i|$ and if there is a cover of $X$ by affinoid opens $U$ for which the induced map $\varinjlim_{U_i}\O(U_i)\to \O(U)$
		has dense image, where the direct limit runs through all $i\in I$ and all affinoid opens  $U_i\subseteq X_i$ through which $U\to X_i$ factors.
	\end{Definition}

        \begin{Lemma}\label{l:tilde-lim-2-lim-on-etqcqs}
		In the situation of \cref{d:tilde-limits}, we have $X^\diamondsuit=\varprojlim_{i\in I} X_i^\diamondsuit$ for the associated diamonds. For the associated quasi-compact quasi-separated \'etale sites, we have 
		\[ X_{\etqcqs}=2\text{-}\varinjlim_{i\in I}X_{i,\etqcqs}.\]
	\end{Lemma}
        
	\begin{proof}
		The first part is \cite[Proposition~2.4.5]{ScholzeWeinstein}, the second  \cite[Proposition~11.23]{etale-cohomology-of-diamonds}.
	\end{proof}

	\begin{Definition}\label{d:tilde-limit-approx-version}
		In the situation of \cref{d:tilde-limits}, we write 
		\[ X\approx \varprojlim_{i\in I} X_i\]
		if already $\varinjlim_{i}\O(X_i)\to \O(X)$ has dense image.
	\end{Definition}

        One reason why this stronger notion is useful is the following elementary observation. 

        \begin{Lemma}\label{l:pointwise-approximation}
		Let $I$ and $J$ be directed sets, and let $(X_{i,j})_{i,j\in I\times J}$ be an inverse system of affinoid adic spaces over $K$, indexed over $I\times J$. Suppose that we have affinoid tilde-limits $X_i\approx \varprojlim_{j\in J} X_{i,j}$ for all $i$ as well as $X\approx \varprojlim_{i\in I}X_i$. Then $X\approx\varprojlim_{i,j\in I\times J} X_{i,j}$.
	\end{Lemma}
        
	\begin{proof}
		The condition on topological spaces is clear; the one on global sections follows by sequential approximation of elements.
	\end{proof}

	The crucial technical property of $\GL_n(\O^+/\varpi)$ that we use to prove \cref{p:H^1-GL_n-mod-p-agrees-for-et-and-v} is the following. 

	\begin{Lemma}\label{p:GL_n(O^+/p^n)-in-proet-site}
		Let $X\approx \varprojlim_{i\in I} X_i$ be an affinoid perfectoid tilde-limit of affinoid adic spaces over $K$. Then
		\[ H^0_{\et}(X,\GL_n(\O^+/\varpi))=\varinjlim_{i\in I} H^0_{\et}(X_i,\GL_n(\O^+/\varpi)).\]
		The analogous statement for $\GL_n(\O^+/\varpi\m)$ also holds.
	\end{Lemma}

        \begin{proof}
		Consider the Cartesian diagram of sheaves of sets
		\[\begin{tikzcd}
			\GL_n(\O^+/\varpi) \arrow[d,"\det"] \arrow[r] & M_n(\O^+/\varpi) \arrow[d,"\det"] \\
			(\O^+/\varpi)^\times \arrow[r] & \O^+/\varpi\rlap{.}
		\end{tikzcd}\]
		The statement holds for the bottom row by \cite[Lemma 3.10]{heuer-diamantine-Picard}. It thus also holds for $M_n(\O^+/\varpi)$, and thus for the top left since $H^0$ preserves finite limits.
		
		The statement for $\O^+/\varpi\m=\varinjlim_{\epsilon\to 0} \O^+/\varpi^{1+\epsilon}$ follows by taking colimits.
	\end{proof}
	
	For later applications, we more generally consider sheaves satisfying an analogous property in a more general setting. 
	
	\begin{Setup}\label{setup}
		Throughout the rest of this subsection, let $S$ be an adic space over $K$, and let $\mathcal C$ be a full subcategory of  adic spaces over $S$ satisfying the following conditions:
	\begin{enumerate}
		\item\label{ax1} $\mathcal C$ contains every perfectoid space $X$ over $S$ as well as $X\times \B^d$ for every $d\in \N$.
		\item\label{ax2} For any $X\in \mathcal C$ and any \'etale morphism of pre-adic spaces $Y\to X$ in the sense of \cite[Definition 8.2.16]{KedlayaLiu-rel-p-p-adic-Hodge-I}, the pre-adic space $Y$ is an adic space (\textit{i.e.}~$X$ is \'etale-sheafy) and $Y$ is contained in $\mathcal C$.
	\end{enumerate}
	We endow $\mathcal C$ with the \'etale topology.
	The requirement in \Cref{setup}.(2)
        ensures that this is well defined.
	\end{Setup}

        For example, $\mathcal C$ could be the big \'etale site $\Sous_{S,\et}$ of sousperfectoid spaces over $S$, which satisfies axioms~\eqref{ax1} and~\eqref{ax2} by \cite[Proposition 6.3.3]{ScholzeBerkeleyLectureNotes}. In particular, for $S=\Spa(K)$ it could be $\Sous_{K,\et}$.

        \begin{Definition}\label{d:approx-property}
		Let $\mathcal C$ be a site of adic spaces as in \Cref{setup}. Let $F$ be a sheaf on $\mathcal C$.
		\begin{enumerate}
		\item\label{d:ap-1}  We say that $F$ satisfies the \emph{approximation property} on $\mathcal C$ if for any affinoid perfectoid tilde-limit $X\approx \varprojlim_{i\in I} X_i$ of affinoid adic spaces in $\mathcal C$, we have \[F(X)=\varinjlim_{i\in I} F(X_i).\]
		\item\label{d:ap-2} Diamondification (see \cite[Section~15]{etale-cohomology-of-diamonds}) defines a natural morphism of sites $\lambda\colon \Dmd_{K,v}\to \mathcal C$. We call $F$ a \emph{$v$-sheaf on $\mathcal C$} if the natural map $F\to \lambda_{\ast}\lambda^\ast F$ is an isomorphism. Since any object of $\Dmd_{K,v}$ admits a cover by a perfectoid space over $S$, hence an object of $\mathcal C$, this means explicitly that for any $X,Y,Z\in \mathcal C$ and any $v$-covers $Y^\diamondsuit\to X^\diamondsuit$ and  $Z^\diamondsuit\to Y^\diamondsuit\times_{X^\diamondsuit}Y^\diamondsuit$,
		\[ F(X)\lra F(Y)\longrightrightarrows F(Z)\]
		is an equaliser diagram.
		\end{enumerate}
	\end{Definition}	
        
        The goal of this subsection is to show the following. 

        \begin{Proposition}\label{p:approx-property-implies-et-v}	Let $\mathcal C$ be a site of adic spaces as in \Cref{setup}.
		Let $F$ be a sheaf of sets on $\mathcal C$ satisfying the approximation property in the sense of \cref{d:approx-property}\,\eqref{d:ap-1}. Then:
		\begin{enumerate}
			\item\label{p:apiev-1} $F$ is already a $v$-sheaf.
			\item\label{p:apiev-2} If\, $F$ is a sheaf of groups, then for any $X$ in $\mathcal C$, we have
			$R^1\nu_{\ast} F=1$ for $\nu\colon X_v\to X_{\et}$. 
			\item\label{p:apiev-3} If\, $F$ is a sheaf of abelian groups, then $R^m\nu_{\ast} F=1$ for any $m\geq 1$.	
		\end{enumerate}
	\end{Proposition}

        The application we have in mind for the purpose of this section is to show the following. 

        \begin{Corollary}\label{cor215}
	  Let $X$ be any sousperfectoid space; then the map
          \[H^n_{\et}(X,\O^+/\varpi)\lra H^n_{v}(X,\O^+/\varpi)\]
		is an $($honest, not just almost\,$)$ isomorphism for all $n\geq 0$ .
	\end{Corollary}

        This is already non-trivial for $n=0$ and $X$ perfectoid, where almost acyclicity on affinoid perfectoid spaces (see \cite[Proposition 7.13]{perfectoid-spaces} for the \'etale topology, \cite[Proposition 8.8]{etale-cohomology-of-diamonds} for the $v$-topology) \textit{a priori} only implies an almost isomorphism. For rigid $X$, it is related to \cite[Lemma 4.2]{Scholze_p-adicHodgeForRigid}, which implies the analogous result for $X_{\proet}$. For strictly totally disconnected $X$, Corollary~\ref{cor215} recovers \cite[Proposition 2.13]{MannWerner_LocSys_p-adVB} by a similar proof.

	\begin{proof}[Proof of \cref{p:approx-property-implies-et-v}]
		We start with a few technical lemmas about the situation. 

	        \begin{Lemma}\label{l:approx-prop-and-cohom}
			Let $F$ be a sheaf of $($not necessarily abelian$)$ groups on $\mathcal C$ satisfying the approximation property. Then for $m=0,1$ and any $X\approx \varprojlim_{i\in I} X_i$ as in \cref{d:approx-property},
			\[H^m_{\et}(X,F)=\varinjlim_{i\in I} H^m_{\et}(X_i,F).\]
			If\, $F$ is abelian, this holds for any $m\geq 0$.
		\end{Lemma}

                \begin{proof}
			For $m=0$ this holds by definition. For $m\geq 1$ it follows by a \cH\ argument: following \cite[Definition 3.6]{heuer-diamantine-Picard}, let us call a morphism of affinoid adic spaces standard-\'etale if it is a successive composition of finite \'etale maps and rational open immersions. Such maps form a basis for the \'etale topology; hence every class in $H^m_{\et}(X,F)$ is trivialised by a standard-\'etale cover $X'$ of $X$. By \cref{l:tilde-lim-2-lim-on-etqcqs}, this cover arises via pullback from some $X_i'\to X_i$. For $j\geq i$, let $X_j':=X_j\times_{X_i}X_i'$; then by 
			\cite[Lemma~3.13]{heuer-diamantine-Picard}, we have a tilde-limit relation $X'\approx \varprojlim X_j'$, so this fulfils the assumptions of the lemma, and we therefore have $H^0_{\et}(X',F)=\varinjlim H^0_{\et}(X'_j,F)$. The statement for $H^m$ now follows by comparing \cH\ cohomology for the covers $X'\to X$ and $X'_j\to X_j$ for $j\geq i$.
		\end{proof}

                \begin{Lemma}\label{l:v-cover-approximate-by-balls}
			 Any morphism $f\colon Y\to X$ of affinoid perfectoid spaces over $K$ arises as the tilde-limit $Y\approx\varprojlim_{i\in I} Y_i$ of a cofiltered inverse system of sousperfectoid spaces $Y_i\subseteq \B^{d}\times X\to X$ that are rational open in unit balls over $X$. If moreover $X$ is strictly totally disconnected and $f$ is a $v$-cover, then each morphism $Y_i\to X$ admits a splitting.
		\end{Lemma}

                \begin{proof}
			This fact is used in the proof of \cite[Proposition 14.7]{etale-cohomology-of-diamonds}. For a proof, see \cite[Proposition~3.17]{heuer-diamantine-Picard}, and \cite[Lemma~2.23]{heuer-Picard-good-reduction} for the last sentence.
		\end{proof}

                \begin{Lemma}\label{l:proet-Colmez-cover}
		Let $X$ be any affinoid adic space over $K$. Then:
		\begin{enumerate}
			\item\label{l:pCc-1} There is an inverse system $(X_i\to X)_{i\in I}$ of finite \'etale Galois covers with affinoid perfectoid tilde-limit $X_\infty\approx \varprojlim X_i$. 
			\item\label{l:pCc-2} There is an inverse system $(X_i\to X)_{i\in I}$ of surjective \'etale morphisms with strictly totally disconnected tilde-limit $\wt X\approx \varprojlim X_i$.
		\end{enumerate}
		
		\end{Lemma}

                \begin{proof}
		Part~\eqref{l:pCc-1} is \cite[Lemma 10.1.6--7]{ScholzeWeinstein} (see also \cite[Lemma 15.3, Proposition 15.4]{etale-cohomology-of-diamonds}) and goes back to Colmez \cite[Section~4]{Colmez-espacesdeBanach}. 
		
		For part~\eqref{l:pCc-2}, we first recall that by \cite[Lemma 7.18]{etale-cohomology-of-diamonds}, the perfectoid space $X_\infty$ admits an affinoid pro-\'etale cover by a strictly totally disconnected space $\wt X$. By  \cite[Proposition~6.5]{etale-cohomology-of-diamonds}, this means that there is a cofiltered inverse system of  \'etale maps from affinoid perfectoid spaces $(X_{\infty,l}\to X_\infty)_{l\in L}$ such that $\wt X\approx \varprojlim_{l\in L} X_{\infty,l}$. Using Lemmas~\ref{l:tilde-lim-2-lim-on-etqcqs} and~\ref{l:pointwise-approximation}, these combine to give the desired system of \'etale morphisms over $X$. 
	        \end{proof}
                
	\begin{Lemma}\label{l:product-of-tilde-limits}
		Let $Y\approx \varprojlim_{i\in I} Y_i\to X$ be one of the affinoid perfectoid  tilde-limits of  Lemma~\ref{l:v-cover-approximate-by-balls} or~\ref{l:proet-Colmez-cover}. Then
		for any $m\in \N$, the $m$-fold fibre product $ Y_{/X}^{\times m}$ of\, $Y$ with itself over $X$ exists in the category of uniform adic spaces and is again affinoid perfectoid. Moreover, we still have
			\[ Y_{/X}^{\times m}\approx \varprojlim_{i\in I} Y_{i/X}^{\times m}.\]
	\end{Lemma}
        
	\begin{proof}
		In the case of \cref{l:v-cover-approximate-by-balls}, where $Y\to X$ is a morphism of affinoid perfectoid spaces, this can be seen exactly as in \cite[Lemma 2.8]{perfectoid-covers-Arizona}, by approximation of simple tensors.
		
		In the case of \cref{l:proet-Colmez-cover}, we can see inductively that the fibre product exists because for any $m\geq 2$, the projection $Y^{\times m}_{/X}\to Y^{\times (m-1)}_{/X}$ is pro-\'etale over an affinoid perfectoid space, hence itself affinoid perfectoid. 
		Moreover, for any $m\geq 2$, we have 
			\[Y^{\times m}_{/X}=Y\times_{X}Y^{\times (m-1)}_{/X}\approx \varprojlim_{i\in I}Y\times_{X}Y^{\times (m-1)}_{i/X}\]
		as this is true for affinoid pro-\'etale maps of perfectoid spaces. Second, for any fixed $i$, we have
			\[Y\times_{X}Y^{\times(m-1)}_{i/X}\approx\varprojlim_{j\in I} Y_j\times_{X}Y^{\times(m-1)}_{i/X}\]
			 by \cite[Lemma 3.13]{heuer-diamantine-Picard}. 
			 We can therefore invoke  \cref{l:pointwise-approximation} to deduce that
			 \[ Y_{/X}^{\times m}\approx \varprojlim_{i,j\in I\times J} Y_j\times_{X}Y^{\times(m-1)}_{i/X}. \]
			  Finally, the diagonal embedding $I\subseteq I\times I$  identifies $I$ with  a cofinal subset of $I\times I$; hence the cofiltered inverse systems $(Y_j\times_{X}Y^{\times(m-1)}_{i/X})_{i,j\in I\times J}$ and $(Y_i\times_{X}Y^{\times(m-1)}_{i/X})_{i\in I}$ are isomorphic.
	\end{proof}

	We can now give the proof of \cref{p:approx-property-implies-et-v}: for the reader interested in an exposition of the argument in the special case of $F=\O^+/p^n$, we also refer to \cite[Appendix C]{Zavyalov_acoh}.	
		
		For strictly totally disconnected $X$, part~\eqref{p:apiev-1} is proved in \cite[Lemma 2.12]{MannWerner_LocSys_p-adVB}. We repeat the argument here as it ties in well with the general case: let $\lambda\colon \Dmd_{K,v}\to \mathcal C$ be the natural morphism of sites, and let $F_v:=\lambda_{\ast}\lambda^{\ast}F$ be the $v$-sheafification of $F$ on $\mathcal C$ from \Cref{d:approx-property}.
		
\begin{enumerate}[label={\emph{Step}~\arabic*:}, ref=\arabic*,wide]
\item\label{step1} \emph{$F\to \lambda_{\ast}\lambda^{\ast}F$ is injective for strictly totally disconnected $X$.}~
		Let $\beta_1,\beta_2\in F(X)$ be such that the images of $\beta_1$ and $\beta_2$ under the map $F(X)\to F_v(X)$ agree. Then there is a $v$-cover $Y\to X$ by a perfectoid space such that the images of $\beta_1$ and $\beta_2$ under the map $F(X)\to F(Y)$ agree. Applying the approximation property to  the tilde-limit $Y\approx \varprojlim Y_i$ from \cref{l:v-cover-approximate-by-balls}, we see that $F(Y)=\varinjlim F(Y_i)$,
		so there is some cover $Y_i\to X$ such that the images of $\beta_1$ and $\beta_2$ already agree under  $F(Y)\to F(Y_i)$. But \cref{l:v-cover-approximate-by-balls} also says that the map $Y_i\to X$ has a section,  so it follows that $\beta_1=\beta_2$.

\item\label{step2} \emph{$F\to \lambda_{\ast}\lambda^{\ast}F$ is injective for any $X$ in $\mathcal C$.}~
		Let $\wt X\approx \varprojlim X_i$ be the strictly totally disconnected pro-\'etale cover of $X$ from \cref{l:proet-Colmez-cover}. Using that $F$ is an \'etale sheaf, then the approximation property, and finally Step~\ref{step1}, we find that the following map is injective:
		\[F(X)\longhookrightarrow \varinjlim F(X_i)=F(\wt X)\longhookrightarrow F_v(\wt X).\]
		As this factors through $F_v(X)$, this shows that $F(X)\to F_v(X)$ is injective. 
		
\item\label{step3} \emph{$F\to \lambda_{\ast}\lambda^{\ast}F$ is surjective for strictly totally disconnected $X$.}~
		Let $\alpha \in F_v(X)$; then there is an affinoid perfectoid $v$-cover $Y\to X$ such that $\alpha$ appears in $F(Y)$. By Step~\ref{step2} applied to $F(Y\times_XY)\hookrightarrow F_v(Y\times_XY)$, it thus lies in $\cH^0(Y\to X,F)$.
		
		 By the same approximation $Y\approx\varprojlim Y_i$ as in Step~\ref{step1}, as well as \cref{l:product-of-tilde-limits} and the approximation property, we have $\cH^0(Y\to X,F)=\varinjlim \cH^0(Y_i\to X,F)$. It follows that for $i$ large enough, $\alpha$ already lies in the image of 
		$\cH^0(Y_i\to X,F)\to F_v(X)$.
		
		We are not quite done yet because we do not require $Y_i\to X$ to be a cover in $\mathcal C$. To resolve this, observe that we can endow $\mathcal C$ with a ``smooth'' topology $\mathcal C_{\mathrm{sm}}$ for which we define covers to be the surjective morphisms $U\to V$ that are locally given by a composition $U\to V\times \mathbb B^d\to V$, where the first map is \'etale and the second map is the projection. This indeed defines a site, and this ``smooth'' topology clearly refines the \'etale one. We conclude that $\lambda\colon \Dmd_{K,v}\to \mathcal C$ factors through a morphism of sites $\mu\colon \mathcal C_{\mathrm{sm}}\to \mathcal C$. However, since $X$ is strictly totally disconnected, any cover of $X$ in $\mathcal C_{\mathrm{sm}}$ admits an \'etale refinement. For this reason, we see directly from the explicit definition of the sheafification in terms of the iteratively formed colimit of the $\supth{0}$ \v{C}ech cohomology over all covers  (see \textit{e.g.}~\cite[Tag~00W1]{StacksProject}) that $\mu^{\ast}\mathcal F(X)=\mathcal F(X)$. Since $\cH^0(Y_i\to X,F)\to F_v(X)$ factors through $\cH^0(Y_i\to X,\mu^\ast F)=\mu^{\ast}\mathcal F(X)$, this shows that $\alpha$ lies in the image of $F(X)\to F_v(X)$.
		
\item\label{step4} \emph{$F\to \lambda_{\ast}\lambda^{\ast}F$ is an isomorphism for any adic space $X$ in $\mathcal C$.}~
		As in Step~\ref{step2}, we consider the cover $\wt X\to X$ from \cref{l:proet-Colmez-cover}\eqref{l:pCc-2}. Let us compute $\cH^0(\wt X\to X,F)$. By Steps~\ref{step1} and~\ref{step3}, we have 
		\[ F\left(\wt X\right)=F_v\left(\wt X\right).\]
		Second, $\wt X\times_X\wt X$ is a perfectoid space by \Cref{l:product-of-tilde-limits}, hence in $\mathcal C$, so by Step~\ref{step2}, the map
		\[F\left(\wt X\times_X\wt X\right)\longhookrightarrow F_v\left(\wt X\times_X\wt X\right)\]
		is injective. Unravelling the definition of $\cH^0$, this combines to show that
		\[F_v(X)=\cH^0\left(\wt X\to X,F_v\right)=\cH^0\left(\wt X\to X,F\right).\]
		But we have $F(\wt X)=\varinjlim F(X_i)$, and by \cref{l:product-of-tilde-limits}, we moreover have $F(\wt X\times_X\wt X)=\varinjlim F(X_i\times_XX_i)$. This shows that
		\[
		\cH^0\left(\wt X\to X,F\right)=\varinjlim \cH^0(X_i\to X,F)=F(X).\]
		In combination, this shows that $F_v(X)=F(X)$, as we wanted to see.
		
		\medskip 
		
		We now prove the vanishing of $R^m\nu_{\ast}F$ for $m\geq 1$ by an intertwined induction on $m\geq 1$: the induction step is first carried out for strictly totally disconnected $X$, then for general $X$.

\item\label{step5} \emph{induction step to prove $R^m\nu_{\ast}F=1$  for strictly totally disconnected $X$.}~	
		As $X$ is strictly totally disconnected, it has trivial \'etale cohomology, so we need to show that $H^m_{v}(X,F)=0$.
		Let $\alpha \in H^m_{v}(X,F)$. By locality of cohomology, there is an affinoid perfectoid $v$-cover $Y\to X$ such that $\alpha$ becomes trivial in $H^m_{v}(Y,F)$.  
		
		For the non-abelian case and $m=1$, we now use the short exact sequence of pointed sets
		\[ \cH^1(Y\to X,F)\lra H^{1}_v(X,F)\lra H^1_v(Y,F).\]
		For the abelian case and  $m\geq 1$, we more generally have the \cH-to-sheaf spectral sequence 
		\[ \cH^k\left(Y\to X,H^j_v(-,F)\right)\Longrightarrow H^{k+j}_v(X,F)\]
		for $m=k+j$. In either case, we use the inverse system from \cref{l:v-cover-approximate-by-balls} and invoke Lemmas~\ref{l:product-of-tilde-limits} and~\ref{l:approx-prop-and-cohom}: by the induction hypothesis,
                we have for $j<m$ that
		\[ \cH^k\left(Y\to X,H^j_v(-,F)\right)=\cH^k\left(Y\to X,H^j_{\et}(-,F)\right)=\varinjlim\cH^k\left(Y_i\to X,H^j_{\et}(-,F)\right).\]
		As $Y_i\to X$ is split, the last term vanishes for $k>0$. Hence only the term for $k=0$, $j=m$ contributes. This means that $H^{m}_v(X,F)\to H^m_v(Y,F)$
		has trivial kernel. Hence $\alpha=0$.
		
\item\label{step6} \emph{induction step to prove $R^m\nu_{\ast}F=1$  for general $X$.}~
  Finally, we assume by the induction hypothesis
  that $R^j\nu_{\ast}F=1$ for any $1\leq j< m$ and any $X$, as well as for $j=m$ if $X$ is totally disconnected, and deduce that $R^m\nu_{\ast}F=1$. For this we again use the cover $\wt X\to X$ from \cref{l:proet-Colmez-cover}\eqref{l:pCc-2}. As in Step~\ref{step5}, we consider the short exact sequence of pointed sets, respectively the \cH-to-sheaf spectral sequence for abelian $F$
		\[ \cH^k\left(\wt X\to X,H^j_v(-,F)\right)\Longrightarrow H^{k+j}_v(X,F)\]
		for $k+j=m$.
		By the induction hypothesis, we have
		\[ \cH^k\left(\wt X\to X,H^j_v(-,F)\right)=\varinjlim_i  \cH^k\left(X_i\to X,H^j_{\et}(-,F)\right)\]
		for all $k>0$ and $j<m$, by Lemmas~\ref{l:product-of-tilde-limits} and~\ref{l:approx-prop-and-cohom}. For $j=m$, $k=0$, this equation also holds since by Step~~\ref{step5} we have $H^m_v(\wt X,F)=0$ and $\varinjlim_iH^m_{\et}(X_i,F)=H^m_{\et}(\wt X,F)=0$.  By comparing to the \cH-to-sheaf spectral sequence for the \'etale cover $X_i\to X$,
		\[ \cH^k\left(X_i\to X,H^j_{\et}(-,F)\right)\Longrightarrow H^{k+j}_{\et}(X,F),\]
		 we deduce that 
		 $H^{m}_{\et}(X,F)=H^{m}_v(X,F)$, as we wanted to see.\qedhere
                 \end{enumerate}
	\end{proof}
	
	This finishes the proof of \cref{p:H^1-GL_n-mod-p-agrees-for-et-and-v}.
\end{proof}

Before we continue, we record a lemma which we will use later. Recall that the $v$-sheaf property of \Cref{d:approx-property} means that we can extend $F$ uniquely to a sheaf on $\Dmd_{K,v}$.

\begin{Lemma}\label{l:approx-prop-for-fibre-products}
	Let $F$ be a sheaf of sets on $\mathcal C$ satisfying the approximation property in the sense of \cref{d:approx-property}. Let $(X_i)_{i\in I}$ be a cofiltered inverse system of locally spatial diamonds over $K$ with qcqs transition maps. Let $X:=\varprojlim_{i\in I} X_i$; this is a locally spatial diamond by \cite[Lemma~11.22]{etale-cohomology-of-diamonds}. Then
	\[ F(X)=\varinjlim_{i\in I} F(X_i).\]
\end{Lemma}

\begin{proof}
	We begin by following the proof of \cite[Lemma~11.22]{etale-cohomology-of-diamonds}, providing some additional details for the reader's convenience: since the statement is local on $X_0$ for any fixed $0\in I$, we can first reduce to the case that all $X_i$ and hence also $X$ are qcqs. By the ``refined argument'' in the proof of  \cite[Lemma~11.22]{etale-cohomology-of-diamonds}, we can then find a cofiltered inverse system of strictly totally disconnected spaces $(\wt X_i)_{i\in I}$ with compatible quasi-pro-\'etale surjections $\wt X_i\to X_i$: let us provide some details on this claim. Using \cite[Lemma~1.6]{Adamek-Rosicky} and arguing as in the proof of \cite[Corollary~1.7]{Adamek-Rosicky}, it suffices to consider the case that $I$ is a chain, \textit{i.e.}~that $I$ is the set of ordinals less than $\lambda$ for some ordinal $\lambda$. We can then argue by transfinite induction, as follows: given any limit ordinal $\mu<\lambda$, if we have found $\wt X_i\to X_i$ for all $i<\mu$, then 
	\[ \varprojlim_{i<\mu} \wt X_i\lra \varprojlim_{i<\mu} X_i\]
	is a quasi-pro-\'etale cover, hence so is its base-change
	\[ X'_\mu:=\varprojlim_{i<\mu} \wt X_i\times_{\varprojlim_{i<\mu} X_i}X_\mu \lra X_\mu.\]
	By the statement of \cite[Lemma~11.22]{etale-cohomology-of-diamonds}, $X'_\mu$ is a diamond, and we can define $\wt X_\mu\to X'_\mu$ to be any strictly totally disconnected quasi-pro-\'etale cover. Then $\wt X_\mu\to X_\mu$ has all desired properties.
	
	 By \cite[Proposition 6.5]{etale-cohomology-of-diamonds},  there exists an affinoid perfectoid tilde-limit
	\[ \wt X\approx \varprojlim_{i\in I} \wt X_i\]
	which is a quasi-pro-\'etale cover $\wt X\to X$. Since quasi-pro-\'etale covers of strictly totally disconnected spaces are again strictly totally disconnected by \cite[Lemma 7.19]{etale-cohomology-of-diamonds}, it follows that $\wt X_i\times_{X_i}\wt X_i\to X_i$ is still affinoid perfectoid. Since  limits in the category of diamonds commute with fibre products, we see  again by \cite[Proposition 6.5]{etale-cohomology-of-diamonds} that
	\[\wt X\times_X\wt X\approx \varprojlim_{i\in I} \wt X_i\times_{X_i}\wt X_i.\]
	We now use that $F$ is a $v$-sheaf. Combined with the approximation property, this implies
	\[ F(X)=\cH^0\left(\wt X\to X,F\right)=\varinjlim_{i\in I} \cH^0\left(\wt X_i\to X,F\right)=\varinjlim_{i\in I} F(X_i),\]
	as we wanted to see.
\end{proof}

\begin{Remark}
	For any sousperfectoid space $X$ over $K$ and any sheaf $F$ on $X_\et$, the restriction to $\Sous_{X,\et}$ of the pullback $\nu^{\ast}F$ along $\nu\colon X_v\to X_{\et}$ satisfies the approximation property by \cite[Proposition 14.9]{etale-cohomology-of-diamonds}. However, not every sheaf on $\Sous_{X,\et}$  satisfying the approximation property is of this form: for example, for $X=\Spa(K,\O_K)$ and $F=\O^+/p$ on $X_\et$, let $C$ be a complete algebraically closed field extension of~$K$ whose residue field is transcendental over that of $K$. Let $\overline{K}\subseteq C$ be the algebraic closure of $K$, and consider $Y:=\Spa(C)$ in $X_v$. Then one sees directly from the definition of $\nu^\ast$ that $\nu^\ast F(C)=\O_{\overline{K}}/p$, whereas $\O^+/p(C)=\O_C/p$.
\end{Remark}

\subsection{Generalised representations on perfectoid spaces}

With these preparations, we can now also prove an integral version of the following result of Kedlaya--Liu  already mentioned in the introduction (see also \cite[Lemma~17.1.8]{ScholzeBerkeleyLectureNotes}). 

\begin{Theorem}[\textit{cf.} {\cite[Theorem 3.5.8]{KedlayaLiu-II}}]\label{t:loc-free-O-modules}
	Let $X$ be a perfectoid space. Then the categories of vector bundles on $X_{\an}$, $X_{\et}$,  $X_{\proet}$,  $X_{\qproet}$ and $X_{v}$ are all equivalent.
\end{Theorem}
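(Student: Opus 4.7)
My strategy is to reduce the five-fold equivalence to the equivalence between vector bundles on $X_{\et}$ and on $X_v$, and to bootstrap the latter from \cref{p:H^1-GL_n-mod-p-agrees-for-et-and-v}. The equivalence $X_{\an}\simeq X_{\et}$ is Kedlaya--Liu's classical Tate acyclicity input, and the intermediate topologies fit into the factorisation $X_v\to X_{\qproet}\to X_{\proet}\to X_{\et}$, so once $X_\et\isomarrow X_v$ is established for vector bundles, the intermediate categories follow automatically by sandwiching. The plan is then to prove $X_\et = X_v$ in two stages: first for finite locally free $\O^+$-modules, then for their rationalisations.

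\emph{Integral stage.} Let $V$ be a finite locally free $\O^+$-module on $X_v$. Each reduction $V/p^n$ descends by \cref{p:H^1-GL_n-mod-p-agrees-for-et-and-v} (applied with $\varpi=p^n$) uniquely to an \'etale module $W_n$, and uniqueness forces the system $(W_n)_n$ to be compatible along reductions. I would then set $W:=\varprojlim_n W_n$ on $X_\et$ and check it is finite locally free of the same rank as $V$, using that \'etale-locally each $W_n$ is $(\O^+/p^n)^r$ and that $\O^+$ is $p$-adically complete, so that trivialisations of $W_1$ lift compatibly through the tower. The identification $\nu^{\ast}W\cong V$ follows by matching reductions mod $p^n$, while full faithfulness reduces to $\nu_{\ast}\O^+=\O^+$, obtained as a limit of $\nu_{\ast}(\O^+/p^n)=\O^+/p^n$.

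\emph{Rational stage.} Given a v-vector bundle $E$ of rank $r$, pick a v-cover $Y\to X$ trivialising $E$, giving a cocycle $g\in \GL_r(\O(Y\times_X Y))$. After further v-refinement of $Y$, I arrange that $g$ takes values in $\GL_r(\O^+)$; then the lattice $(\O^+_Y)^r\subset \O^r_Y\cong E|_Y$ glues to an $\O^+$-lattice $V\subset E$ that is finite locally free on $X_v$. The integral stage descends $V$ to an \'etale $V_\et$, and $V_\et[1/p]$ is the desired \'etale vector bundle with $\nu^{\ast}(V_\et[1/p])\cong E$.

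\emph{Main obstacle.} The trickiest step is the v-refinement in the rational stage: bounding $g$ and $g^{-1}$ entrywise does not follow from merely shrinking $Y$, but requires simultaneously adjusting the trivialisation so that the lattice inside $\O_Y^r$ corresponds to an integral structure on $E|_Y$. I would handle this by invoking Kedlaya--Liu's structural input that on any affinoid perfectoid every finite projective $\O$-module admits a finite projective $\O^+$-lattice, pulled back along $Y\to X$; alternatively, one can first produce an almost-lattice via almost mathematics on $\O^{+,a}$-modules and rigidify to an honest $\O^+$-lattice, exploiting that the integral stage's equivalence is honest rather than merely almost by virtue of \cref{p:H^1-GL_n-mod-p-agrees-for-et-and-v}.
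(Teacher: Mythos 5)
The theorem you are proving is stated in the paper with a citation to Kedlaya--Liu and is not given a direct proof there; the new proof promised in the introduction arises only as the $G=\GL_n$ special case of \cref{t:G-torsors-on-perfectoids}, which in particular handles the $X_\et\simeq X_v$ step, while the analytic-to-\'etale comparison still has to be quoted from Kedlaya--Liu. Your overall two-stage plan (integral lattice, then invert $p$) is a natural one, and it is broadly in the spirit of the paper's strategy of reducing to a small open subgroup $\GL_n(\O^+)$, but as written both stages have genuine gaps.

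The serious gap is in your rational stage, and you half-diagnose it yourself. Refining the cover $Y$ to some $Y'\to Y\to X$ never changes the values of the cocycle $g$ --- it is simply pulled back --- so no amount of $v$-refinement will make $g$ integral. What is needed is to replace the trivialisation $\alpha$ by $\alpha\cdot h$ for a suitable $h\in\GL_r(\O(Y))$, and to find such an $h$ with $(p_1^\ast h)^{-1}\,g\,(p_2^\ast h)\in\GL_r(\O^+)$ is exactly the assertion that the coset bundle $E\times^{\GL_r}\!\big(\GL_r(\O)/\GL_r(\O^+)\big)$ admits a section, after passing to a suitable cover of $X$. That is precisely the content of \cref{p:reduction-of-structure-group-to-open-subgroup}, and the paper proves it only by first establishing the approximation property for the coset sheaf $G/U$ (\cref{p:approx-property-for-G/U}), which is the technical heart of the whole paper. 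Neither of your proposed fixes substitutes for this: pulling back Kedlaya--Liu's lattice on an affinoid perfectoid $Y$ does not produce a lattice \emph{compatible with the descent datum} (the two pullbacks to $Y\times_X Y$ differ by $g$); and the ``almost-lattice plus rigidification'' idea is not concrete enough to evaluate. Moreover, assuming that $E(X)$ is already a finite projective $\O(X)$-module, which is what Kedlaya--Liu's lattice statement needs as input, is circular, since that is essentially the statement being proved.

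There is also a quieter issue in your integral stage: you lift a trivialisation of $W_1=V/p$ through the tower $(W_n)_n$, but the obstruction to lifting from $W_n$ to $W_{n+1}$ lies in $H^1_{\et}(X',\O^+/p)^r$, which on an affinoid perfectoid $X'$ is only \emph{almost} zero, not zero. The paper sidesteps this precisely by working with $V/\varpi\m$ rather than $V/\varpi$ in \cref{l:local-freeness--in-the-limit-in-the-v-topology}: the obstruction then lands in $H^1_v(X',\m\O^+/\varpi\m)$, which vanishes on the nose because $\m$ kills the almost-zero group $H^1_v(X',\O^+/\varpi\m)$. So you should trivialise $V/p\m$ rather than $V/p$ and lift modulo $p^k\m$; with that correction, your integral stage matches the proof of \cref{t:loc-free-O^+-modules}. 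The rational stage, however, needs the approximation machinery of Section~\ref{s:approx-quot-sheaves}, which your proposal does not supply.
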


Our integral version is the following. 

\begin{Theorem}\label{t:loc-free-O^+-modules}
	Let $X$ be a perfectoid space. Then the categories of finite locally free $\O^+$-modules on $X_{\et}$,  $X_{\proet}$,  $X_{\qproet}$ and $X_{v}$ are all equivalent.
\end{Theorem}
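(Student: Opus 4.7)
The plan is to reduce to the mod-$\varpi$ case of \cref{p:H^1-GL_n-mod-p-agrees-for-et-and-v} by $\varpi$-adic approximation. First, I extend \cref{p:GL_n(O^+/p^n)-in-proet-site} from $\O^+/\varpi$ to $\O^+/\varpi^n$ for every $n\geq 1$ by induction on $n$: the short exact sequence $0\to \O^+/\varpi\xrightarrow{\varpi^{n-1}} \O^+/\varpi^n \to \O^+/\varpi^{n-1}\to 0$ combined with \cref{l:approx-prop-and-cohom} shows that $\O^+/\varpi^n$ satisfies the approximation property, and an analogous Cartesian square (detecting units by their mod-$\varpi$ reduction) together with the determinant Cartesian square from the proof of \cref{p:GL_n(O^+/p^n)-in-proet-site} give it for $(\O^+/\varpi^n)^\times$ and $\GL_r(\O^+/\varpi^n)$. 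By \cref{p:approx-property-implies-et-v}, for every $n\geq 1$ the functor $\nu^\ast$ gives an equivalence of categories of finite locally free $\O^+/\varpi^n$-modules on $X_\et$ and $X_v$, and $R\nu_\ast(\O^+/\varpi^n)=\O^+/\varpi^n$.

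Next I pass to the $\varpi$-adic limit. Since $\O^+$ is derived $\varpi$-adically complete on perfectoid spaces in both topologies, and the transition maps $\O^+/\varpi^{n+1}\to \O^+/\varpi^n$ are surjective (so $R^1\varprojlim$ vanishes), the derived inverse limit of the previous step yields
\[ R\nu_\ast\O^+=R\varprojlim_n R\nu_\ast(\O^+/\varpi^n)=R\varprojlim_n \O^+/\varpi^n=\O^+.\]
This immediately gives fully faithfulness of $\nu^\ast$ on finite locally free $\O^+$-modules via the internal $\HOM_{\O^+}(E_1,E_2)$, which is itself finite locally free.

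For essential surjectivity, given $M$ finite locally free of rank $r$ on $X_v$, each $M/\varpi^n$ descends uniquely to $N_n$ on $X_\et$ by the first step, and the resulting compatible system assembles into $N:=\varprojlim_n N_n$ on $X_\et$. The main obstacle is to show that $N$ is finite locally free of rank $r$ on $X_\et$ and not merely on a finer site. I handle this by first constructing, on a strictly totally disconnected pro-\'etale cover $\wt U\to X$ as in \cref{l:proet-Colmez-cover}, a compatible sequence of trivializations $N_n|_{\wt U}\cong (\O^+/\varpi^n|_{\wt U})^r$; this is possible because $\wt U$ has trivial \'etale cohomology and the step-by-step lifting obstructions lie in such groups by step 1. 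Then $\varpi$-adic completeness of $\O^+(\wt U)$ promotes the resulting compatible bases into a single basis of $N(\wt U)$ as a free $\O^+(\wt U)$-module, and $\nu^\ast N\cong M$ follows from the mod-$\varpi^n$ compatibility together with derived $\varpi$-adic completeness.

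It remains to descend $N$ from pro-\'etale to \'etale level. Here I use a spreading-out argument: the change-of-trivialization data between two descents of $N_n$ is controlled at each finite $n$ by sections of $\GL_r(\O^+/\varpi^n)$, which by \cref{l:tilde-lim-2-lim-on-etqcqs} descend to a finite \'etale refinement of the pro-\'etale cover; $\varpi$-adic completeness then assembles these refinements into an \'etale trivialization of $N$ itself. The equivalences with $X_\qproet$ and, for locally Noetherian $X$, $X_\proet$ follow automatically by the sandwiching $X_\et\leftarrow X_\proet\leftarrow X_\qproet\leftarrow X_v$.
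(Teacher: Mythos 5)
Your steps 1--3 and the construction over $\wt U$ are essentially sound (extending the approximation property from $\O^+/\varpi$ to $\O^+/\varpi^n$, $(\O^+/\varpi^n)^\times$ and $\GL_r(\O^+/\varpi^n)$ by the five lemma and Cartesian squares is legitimate, and on the strictly totally disconnected cover $\wt U$ the step-by-step lifting of trivializations works because all higher \'etale cohomology vanishes there). The genuine gap is the last step, descending the trivialization of $N$ from $\wt U$ to an \'etale cover of $X$. Spreading out via \cref{l:tilde-lim-2-lim-on-etqcqs} and \cref{l:approx-prop-and-cohom} only produces, for each \emph{fixed} $n$, an index $i(n)$ in the tower $\wt U\approx \varprojlim X_i$ such that $N_n$ becomes trivial on $X_{i(n)}$; nothing bounds $i(n)$ as $n\to\infty$, so ``assembling these refinements'' only yields a pro-\'etale cover, and $\varpi$-adic completeness cannot convert an infinite tower of refinements into a single \'etale cover. (Nor can you apply approximation at infinite level: $\O^+(\wt U)\neq \varinjlim \O^+(X_i)$, only density holds.) The obvious repair --- fix one affinoid perfectoid \'etale cover $X'$ trivializing $N_1$ and lift the trivialization through the tower on $X'$ itself --- also fails as stated, because the obstruction to lifting a trivialization from $N_n$ to $N_{n+1}$ lies in $H^1_{\et}(X', M_r(\O^+/\varpi))$, which is only \emph{almost} zero on an affinoid perfectoid space, not zero.

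This is precisely what the paper's \cref{l:local-freeness--in-the-limit-in-the-v-topology} is designed to fix, and it is the idea missing from your argument: one trivializes only $V/\varpi\m$ (a single finite level, with the extra factor $\m$) on an affinoid perfectoid \'etale cover $X'$ using \cref{p:H^1-GL_n-mod-p-agrees-for-et-and-v}, and then lifts a basis through the quotients $V/\varpi^k\m$ on the \emph{same} cover $X'$, because the relevant obstruction group is $H^1_v(X',\m V/\varpi\m)\cong H^1_v\bigl(X',(\m\O^+/\varpi\m)^r\bigr)$, which vanishes honestly: it is the almost-zero group $H^1_v(X',\O^+/\varpi\m)$ multiplied by $\m$. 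Completeness of $V$ then gives freeness of $V$ itself on $X'$, so the cover is pinned at one finite level and no further refinements are needed. Your proposal becomes correct if the final paragraph is replaced by this $\m$-twisted lifting argument on a fixed affinoid perfectoid \'etale cover (or an equivalent device, e.g.\ a Milnor sequence whose Mittag--Leffler condition is verified by the same trick); as written, the key mechanism that makes the infinite lifting possible over a fixed \'etale cover is absent.
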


\begin{Remark}
	We suspect that the categories might not be equivalent to the category of finite locally free $\O^+$-modules on $X_{\an}$, or even to that of finite projective $\O^+(X)$-modules if $X$ is affinoid perfectoid.
\end{Remark}

\begin{Remark}
 	It follows that finite locally free $\O^+$-modules on strictly totally disconnected spaces are trivial.
 	Already for $\Spa(\C_p)$, this becomes wrong in the almost category: for $c\in \mathbb R\backslash \Q$, the module $p^{c}\m$ becomes $\aeq \O^+$ on the $v$-cover $\Spa(C)\to \Spa(\C_p)$, where $C$ is any extension whose value group contains $c$.  We thank Lucas Mann for pointing this out to us.
\end{Remark}

We will later prove a much more general version in \cref{t:G-torsors-on-perfectoids}, the proposition being the case of $G=\GL_n(\O^+)$.  But for greater clarity in the much simpler case of $G=\GL_n(\O^+)$, we first give the results in this case and then later explain how to generalise.
 
 Recall that we use perfectoid spaces in the sense of \cite{perfectoid-spaces}, so $X$ is assumed to live over a perfectoid field $(K,K^+)$, and we denote by $\varpi$ any pseudo-uniformiser of $K$.
 
 We now first observe the following.

\begin{Lemma}\label{l:local-freeness--in-the-limit-in-the-v-topology}
	Let $X$ be an affinoid perfectoid space. 
	\begin{enumerate}
		\item\label{l:226-1} Let $V$ be a $\varpi$-torsion-free $\O^+$-module on $X_v$ such that $V=\varprojlim_k V/\varpi^k$. Assume that there is an $r\in \N$ such that $V/\varpi\m\cong \O^{+r}/\varpi\m$ as $\O^+$-modules. Then $V\cong \O^{+r}$. 
		\item\label{l:226-2} Let $V$ be a finite locally free $\O^+/\varpi$-module. If\, $V/\m$ is free over $\O^+/\m$, then $V$ is free.
	\end{enumerate}
\end{Lemma}

\begin{proof}
	Since $V$ is $\varpi$-torsion-free, we have for any $k>0$ a short exact sequence on $X_v$
	\[ 0\lra \m V/\varpi\mathfrak m\xrightarrow{\varpi^k} V/\varpi^{k+1}\mathfrak m\lra V/\varpi^k\m\lra 0.\]
	For part~\eqref{l:226-1}, assume that $V/\varpi^k\m$ is free of rank $r$, and let $v_1,\dots,v_r$ be any basis of $V/\varpi^k\m(X)$ as a finite free $\O^{+}/\varpi^k\m(X)$-module. Consider the long exact sequence
	\[ 0\lra \m V/\varpi \m(X)\lra V/\varpi^{k+1}\m(X)\lra V/\varpi^k\m(X)\lra H^1_v(X,\m V/\varpi\m).\]
	The last term vanishes: indeed, we have $\m V/\varpi\m\cong \m \O^{+r}/\varpi\m= \m \otimes_{K^+}\O^{+r}/\varpi\m$ by assumption and moreover $H^1_v(X,\O^+/\varpi\m)\aeq 0$ by almost acyclicity; hence
	\[ H^1_v(X,\m V/\varpi\m)\cong H^1_v(X,\O^{+r}/\varpi\m)\otimes_{K^+} \m=0\]
	vanishes ``without almost'', because any almost zero module vanishes after tensoring with $\mathfrak m$.
	This shows that we can lift the basis vectors to sections  $v_1',\dots,v_r'\in V/\varpi^{k+1}\m(X)$. Starting with $k=1$, we can thus inductively define  a map
	\[ \phi_{k+1}\colon \O^{+r}/\varpi^{k+1}\m\lra V/\varpi^{k+1}\m\]
	which reduces mod $\varpi^k\m$ to $\phi_k$. By the 5-lemma, $\phi_{k+1}$ is an isomorphism. In the limit, due to the completeness assumption on $V$, we obtain an isomorphism
	$\phi=\varprojlim_k\phi_k\colon \O^{+r}\to V$.
	
	To deduce part~\eqref{l:226-2}, observe that we can by the same argument lift any basis of $V/\m$ to sections of $V/\varpi \m$: indeed, the lifting obstruction again lies in $H^1_v(X,\m V/\varpi\m)=0$. This basis defines a map $\phi\colon \O^{+r}/\varpi\m\to V/\varpi\m$ that is an isomorphism mod $\m$. If we know \textit{a priori} that $V/\varpi\m$ is finite locally free, we can consider $\det \phi\in \wedge^r(V/\varpi\m)(X)$, which is an invertible  section mod $\m$, thus invertible. This shows that $\phi$ is an isomorphism.
\end{proof}

\begin{proof}[Proof of \cref{t:loc-free-O^+-modules}]
	We clearly have a chain of fully faithful functors, so it suffices to prove that any finite locally free $\O^+$-module $V$ on $X_{v}$ is already free \'etale-locally. For this we may assume that $X$ is affinoid perfectoid. Then by \cref{p:H^1-GL_n-mod-p-agrees-for-et-and-v}, there is an affinoid perfectoid \'etale cover $X'\to X$ on which $V/\varpi\m$ becomes trivial. By \cref{l:local-freeness--in-the-limit-in-the-v-topology}, already $V$ is trivial on $X'$.
\end{proof}

\begin{Corollary}\label{c:v-vb-on-Spa(K)}
	Let $X=\Spa(K,K^+)$, where $K$ is a perfectoid field. Then any locally free $\O^+$-module on $X_v$ is trivial.
\end{Corollary}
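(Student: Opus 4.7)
My plan is to combine \cref{t:loc-free-O^+-modules} with classical Galois descent. First, since $X = \Spa(K, K^+)$ is a perfectoid space, \cref{t:loc-free-O^+-modules} identifies the category of finite locally free $\O^+$-modules on $X_v$ with that on $X_{\et}$, so it suffices to show that every finite locally free $\O^+$-module $V$ on $X_{\et}$ is globally trivial.

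Since $X$ is (geometrically) a single point, any \'etale trivialisation of $V$ can be refined to a single cover $X' = \Spa(L, L^+) \to X$, where $L/K$ is a finite Galois extension with group $G$ and $L^+$ is the integral closure of $K^+$ in $L$, on which $V|_{X'} \cong (\O^+|_{X'})^r$. Setting $M := V(X)$, faithfully flat descent along $X' \to X$ identifies $M$ with the $G$-invariants of $V(X') \cong (L^+)^r$ under the semilinear Galois action, and gives a natural isomorphism $M \otimes_{K^+} L^+ \cong (L^+)^r$. In particular, $M$ is a finitely generated projective $K^+$-module of rank $r$.

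Finally, $K^+$ is a valuation ring: as an open integrally closed subring of the valuation ring $\O_K$ of the perfectoid field $K$, it is of the form $\pi^{-1}(R)$ for $\pi:\O_K\twoheadrightarrow \O_K/\m_K$ and $R\subseteq \O_K/\m_K$ an integrally closed subring, which in order for $\Spa(K,K^+)$ to make sense as an adic space must itself be a valuation ring of $\O_K/\m_K$, so $K^+$ is a valuation ring of $K$. Valuation rings are local, so by Kaplansky's theorem every finitely generated projective $K^+$-module is free, whence $M \cong (K^+)^r$ and $V \cong (\O^+)^r$ globally on $X$. The only delicate point is the faithful flatness of $L^+/K^+$ used in the descent step; this holds because $K$ is complete (being perfectoid), so $L^+$ is the unique valuation ring of $L$ extending $K^+$ and is finite, hence faithfully, flat over $K^+$.
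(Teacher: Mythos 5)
Your proof takes essentially the same approach as the paper's: reduce from $X_v$ to $X_\et$ via \cref{t:loc-free-O^+-modules}, trivialize on a finite Galois cover $\Spa(L,L^+)\to\Spa(K,K^+)$, descend along the faithfully flat map $K^+\to L^+$ to obtain a finite projective $K^+$-module, and conclude using that projective modules over a valuation ring are free. You simply spell out more of the intermediate justifications (why $K^+$ is a valuation ring, why $L^+/K^+$ is faithfully flat) that the paper's two-line proof leaves implicit.
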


\begin{proof} 
	Any \'etale cover of $X$ is a disjoint union of maps $\Spa(L,L^+)\to \Spa(K,K^+)$, where $L|K$ is finite Galois and $L^+|K^+$ is the integral closure, hence faithfully flat. Any $v$-vector bundle on $X$ thus defines a finite projective $K^+$-module. This is free as $K^+$ is a valuation ring.
\end{proof}

As any adic space over $K$ has a pro-\'etale perfectoid cover by \cref{l:proet-Colmez-cover}, we deduce the following. 

\begin{Corollary}\label{c:vector-bundles-on-diamonds-different-tops}
	Let $X$ be any diamond; then the categories of finite locally free $\O^+$-modules on $X_{\qproet}$ and  $X_{v}$ $($and $X_{\proet}$ if\, $X$ is locally Noetherian and $\operatorname{char} K=0)$ are equivalent. 
\end{Corollary}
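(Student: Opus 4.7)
The plan is to deduce the corollary from \cref{t:loc-free-O^+-modules} by descent along a perfectoid cover. Since $X$ is a diamond, by definition there exists a quasi-pro-\'etale surjection $\wt X \to X$ from an affinoid perfectoid space $\wt X$. Each iterated fibre product $\wt X^{\times n}_{/X}$ is quasi-pro-\'etale over $\wt X$, hence again perfectoid. On each such perfectoid space, \cref{t:loc-free-O^+-modules} identifies the categories of finite locally free $\O^+$-modules on the \'etale, quasi-pro-\'etale, and $v$-topology. The strategy is to use these local identifications together with descent along $\wt X \to X$.

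I would first establish full faithfulness of the pullback functor from finite locally free $\O^+$-modules on $X_\qproet$ to those on $X_v$. Since the internal Hom of two finite locally free $\O^+$-modules is again finite locally free, and morphisms are its global sections, it suffices to show that $\nu_\ast \O^+ = \O^+$ for $\nu\colon X_v \to X_\qproet$. This can be tested qproet-locally on $X$, so by the quasi-pro-\'etale cover $\wt X \to X$ it reduces to the equality $H^0_v(\wt X^{\times n}_{/X}, \O^+) = H^0_\qproet(\wt X^{\times n}_{/X}, \O^+)$ for all $n$, which holds by \cref{t:loc-free-O^+-modules} applied to these perfectoid spaces.

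For essential surjectivity, given a finite locally free $\O^+$-module $V$ on $X_v$, I would pull back $V$ to each $\wt X^{\times n}_{/X}$. By \cref{t:loc-free-O^+-modules} on these perfectoid spaces, the pullbacks descend canonically to finite locally free $\O^+$-modules on their quasi-pro-\'etale sites, and the gluing isomorphisms over $\wt X^{\times 2}_{/X}$ together with the cocycle condition over $\wt X^{\times 3}_{/X}$ transport automatically from $v$-data to qproet-data by the full faithfulness proved above. This yields a quasi-pro-\'etale descent datum along $\wt X \to X$ for a finite locally free $\O^+$-module. Since $\O^+$ is a $v$-sheaf, a fortiori a qproet sheaf, and since the descent data consist of isomorphisms of finite locally free modules over the perfectoid fibre products, this descent datum is effective and produces a finite locally free $\O^+$-module on $X_\qproet$ whose $v$-pullback is $V$. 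The pro-\'etale variant under the assumption that $X$ is locally Noetherian and $\operatorname{char} K = 0$ proceeds identically, replacing the quasi-pro-\'etale cover by the Colmez-style pro-\'etale perfectoid cover of \cref{l:proet-Colmez-cover}(1).

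The main obstacle I expect is the effectivity of qproet descent for finite locally free $\O^+$-modules along $\wt X \to X$: since $\wt X \to X$ is only a morphism of $v$-sheaves (not a morphism of honest adic spaces in general), one has to check that the abstract gluing yields an actual finite locally free $\O^+$-module in the qproet topology rather than just a sheaf of $\O^+$-modules. This is where \cref{t:loc-free-O^+-modules} is used a second time, on the fibre products $\wt X^{\times n}_{/X}$: the local triviality of the descended module can be checked after pulling back to $\wt X$, where it reduces to the corresponding statement for $V|_{\wt X}$, already known from the perfectoid case.
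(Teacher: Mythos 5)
Your proposal is correct and takes essentially the same route as the paper, which deduces the corollary from \cref{t:loc-free-O^+-modules} by observing that the quasi-pro-\'etale and $v$-topologies of a diamond are locally perfectoid and descending along a quasi-pro-\'etale perfectoid cover. The only point to phrase more carefully is your step ``quasi-pro-\'etale over $\wt X$, hence again perfectoid'': this is not a general principle, but it is harmless here since one can use the presentation $X=\wt X/R$ with $R\subseteq \wt X\times \wt X$ a perfectoid pro-\'etale equivalence relation (or take $\wt X$ strictly totally disconnected), which makes all fibre products $\wt X^{\times n}_{/X}$ perfectoid as required.
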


We now return to the proof of \cref{t:generalised-reps-are-v-bundles}. For the second part, we also need the following, which we will also generalise later in \cref{p:reduction-of-structure-group-to-open-subgroup}, by a different proof. 

\begin{Lemma}\label{l:v-bundle-is-et-locally-small}
	Let $X$ be an adic space over $K$. Let $V$ be a $v$-vector bundle on $X$. Then \'etale-locally on $X$, there is a finite $v$-locally  free $\O^+$-module $V^+$ such that $V^+\tf=V$.
\end{Lemma}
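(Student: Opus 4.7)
The plan is to prove the lemma by descending an obvious $\O^+$-lattice from a strictly totally disconnected pro-\'etale cover of $X$ to an \'etale cover, using the approximation machinery from \cref{s:approximation} together with part~(1) of \cref{t:generalised-reps-are-v-bundles} (whose proof does not depend on part~(2)).

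Since the statement is \'etale-local on $X$, I may assume $X$ is affinoid. By \cref{l:proet-Colmez-cover}, choose a cofiltered inverse system $(X_i\to X)_{i\in I}$ of surjective \'etale morphisms from affinoid adic spaces whose tilde-limit $\tilde X\approx\varprojlim_{i\in I} X_i$ is affinoid perfectoid and strictly totally disconnected. Then \cref{t:loc-free-O-modules} gives that $V|_{\tilde X}$ is a trivial $v$-vector bundle: fix $V|_{\tilde X}\cong\O^n_{\tilde X}$ and let $\Lambda:=\O^{+n}_{\tilde X}$ be the induced $\O^+$-lattice on $\tilde X_v$, so $\Lambda\tf=V|_{\tilde X}$.

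The core of the argument is to descend $\Lambda$ to an $\O^+$-lattice on a single $X_i$. Via part~(1) of \cref{t:generalised-reps-are-v-bundles} applied to $\tilde X$, this is equivalent to descending the generalised representation $(\Lambda/p^k)_{k\geq 1}$ on $\tilde X_{\et}$ to a generalised representation on some $X_i$. For each fixed $k$, the sheaf $\GL_n(\O^+/p^k)$ satisfies the approximation property by \cref{p:GL_n(O^+/p^n)-in-proet-site}, so \cref{l:approx-prop-and-cohom} yields that both isomorphism classes of, and morphisms between, finite locally free $\O^+/p^k$-modules descend from $\tilde X_{\et}$ to some $X_{i(k)}$. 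A diagonal argument in the filtered system $I$, exploiting that $\Lambda/p^{k+1}$ determines $\Lambda/p^k$ as a canonical quotient (so descending the finite truncation up to level $k$ reduces to descending the single object $\Lambda/p^k$), assembles these level-wise descents into a single generalised representation on a common $X_i$. Applying part~(1) of \cref{t:generalised-reps-are-v-bundles} on $X_i$ then produces a finite locally free $\O^+$-module $V^+$ on $(X_i)_v$ with $V^+|_{\tilde X}\cong\Lambda$.

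Finally, $V^+\tf$ and $V|_{X_i}$ are both $v$-vector bundles on $X_i$ whose restrictions to $\tilde X$ are canonically identified with $V|_{\tilde X}$, and these identifications agree on $\tilde X\times_{X_i}\tilde X$ by construction of $V^+$. By $v$-descent of $v$-vector bundles, this yields the desired isomorphism $V^+\tf\cong V|_{X_i}$. I expect the main obstacle to be the diagonal step: the approximation property provides an index $i(k)$ at each level $k$, but finding a single $i\in I$ that simultaneously supports the entire coherent tower of descents requires careful bookkeeping of the compatibility isomorphisms across all levels in the filtered system, since $I$ need not admit suprema.
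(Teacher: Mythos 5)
Your construction of $V^+$ never actually uses $V$ beyond its restriction to $\tilde X$, and this is where the proof breaks. After trivialising $V|_{\tilde X}\cong\O^n$, the tower you descend is the \emph{trivial} tower $(\Lambda/p^k)_k=((\O^+/p^k)^n)_k$, which exists on every $X_i$ for free; the $\O^+$-module it produces via part (1) of \cref{t:generalised-reps-are-v-bundles} is just $\O^{+n}$ on $X_i$. The whole content of the lemma is then hidden in your last sentence, where you assert that the two identifications of $V^+\tf|_{\tilde X}$ and $V|_{\tilde X}$ ``agree on $\tilde X\times_{X_i}\tilde X$ by construction''. Unwinding, this says that the descent cocycle of $V$ along $\tilde X\to X_i$ lies in $\GL_n(\O^+(\tilde X\times_{X_i}\tilde X))$, i.e.\ preserves the lattice $\Lambda$ --- but nothing in the construction ensures this, and it is false for a fixed $i$ in general: making the cocycle integral is exactly what requires passing to a further cover. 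If you instead try to encode the relation to $V$ by descending the mod-$p^k$ descent data (cocycles in $\GL_n(\O^+/p^k)$ over the double products), the obstacle you flag yourself becomes fatal rather than a matter of bookkeeping: \cref{l:approx-prop-and-cohom} gives an index $i(k)$ for each $k$, and a filtered system need not admit an upper bound for the infinite family $\{i(k)\}_{k\in\N}$, so no single $X_i$ is produced. Some input that is uniform in $k$ --- in effect the openness of $\GL_n(\O^+)$ inside $\GL_n(\O)$ --- is indispensable and is missing from your argument. (A smaller point: triviality of $V|_{\tilde X}$ does not follow from \cref{t:loc-free-O-modules} alone; you also need that a constant-rank vector bundle on a strictly totally disconnected affinoid perfectoid space is free, which requires a short clopen-decomposition argument.)

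For comparison, the paper gets the uniformity from continuity: it takes the pro-finite-\'etale Galois cover $X_\infty\to X$ of \cref{l:proet-Colmez-cover}(1) with profinite Galois group $N$, writes $V$ via the Cartan--Leray sequence as a continuous cocycle $\rho\colon N\to\GL_n(\O(X_\infty))$, and uses that $\GL_n(\O^+(X_\infty))$ is open, so $\rho^{-1}(\GL_n(\O^+(X_\infty)))$ contains an open subgroup $N_1\subseteq N$; the corresponding finite \'etale cover $X'\to X$ is the \'etale localisation on which the cocycle, hence the bundle, becomes integral. A variant of your ``spread out along the tower $(X_i)$'' idea can be made to work, but only by the same mechanism: one must show the $\GL_n(\O)$-valued cocycle over $\tilde X\times_X\tilde X$ is integral on $\tilde X\times_{X_i}\tilde X$ for $i$ large, using that integrality is an open condition together with a quasi-compactness/limit argument (as in the Claim inside the proof of \cref{p:approx-property-for-G/U}) --- not via level-wise mod-$p^k$ descent.
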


\begin{proof}
	We may assume that $X$ is affinoid.
	By \cref{l:proet-Colmez-cover}, there is then a pro-finite-\'etale affinoid perfectoid cover $X_\infty\to X$ that is Galois for some profinite group $N$. The pullback of $V$ to $X_\infty$ is \'etale-locally free by \cref{t:loc-free-O-modules}. By \cref{l:tilde-lim-2-lim-on-etqcqs}, we can replace $X$ by some \'etale cover to assume that the pullback of $V$ to $X_\infty$ is trivial.
	
	It follows that $V$ is associated to a descent datum on the trivial vector bundle on $X_\infty$, thus defines a class in the first term of the Cartan--Leray sequence (see \cite[Proposition~2.8]{heuer-v_lb_rigid})
	\[ 0\lra H^1_{\cts}(N,\GL_n(\O(X_\infty)))\lra H^1_v(X,\GL_n(\O))\lra  H^1_v(X_\infty,\GL_n(\O)).\]
	Let $\rho\colon N\to \GL_n(\O(X_\infty))$ be any continuous $1$-cocycle representing $V$. Let $N_0$ be the inverse image of the open subspace $\GL_n(\O^+(X_\infty))\subseteq \GL_n(\O(X_\infty))$. Then by the continuity of $\rho$, the subspace $N_0\subseteq N$ is an open neighbourhood of the identity in $N$, so $N_0$ contains  an open subgroup $N_1\subseteq N$. Since $N/N_1$ is finite, this corresponds to a finite \'etale cover $f\colon X'\to X$, and the pullback of $V$ to $X'$ is defined by the $1$-cocycle $\rho\colon N_1\to \GL_n(\O^+(X_\infty))$. By the functoriality of the Cartan--Leray sequence, this defines an element in $H^1_v(X',\GL_n(\O^+(X_\infty)))$ whose image in $H^1_v(X',\GL_n(\O))$ corresponds to the isomorphism class of $f^{\ast}V$.\looseness=1
\end{proof}

We can now prove the equivalence of $v$-vector bundles and generalised representations. 

\begin{proof}[Proof of \cref{t:generalised-reps-are-v-bundles}]
	The equivalence between the categories of finite locally free modules in various topologies is \cref{c:vector-bundles-on-diamonds-different-tops}.
	We now first consider the functor in part~\eqref{t:gravb-1}:
	 the $\O^+/p^n$-module $V/p^n$ is already locally free in the \'etale topology by \cref{p:H^1-GL_n-mod-p-agrees-for-et-and-v}, so  $\nu_{\ast}(V/p^n)$ is a locally free $\O^+/p^n$-module on $X_{\et}$. Thus the functor is well defined.
	
	It is fully faithful by \cref{p:H^1-GL_n-mod-p-agrees-for-et-and-v} and because any system of compatible morphisms $V/p^n\to W/p^n$ on~$X_v$ in the limit induces a unique $\O^+$-linear morphism $V\to W$. 
	
	It remains to see that the functor is essentially surjective. Let $(M_n)_{n\in\N}$ be a generalised representation, and consider the $v$-sheaf $V:=\varprojlim_{n\in\N} \nu^{\ast}M_n$. As the $v$-topology is replete in the sense of \cite[Section~3.1]{bhatt-scholze-proetale} (see \cite[Lemma~2.6]{heuer-Picard-good-reduction}), we have $V/p^n=M_n$. To see that $V$ is finite locally free, let $X'\to X$ be an \'etale cover that trivialises $M_1$. Choose a pro-\'etale cover by an affinoid perfectoid $X''\to X'$. Then by \cref{l:local-freeness--in-the-limit-in-the-v-topology}, the fact that $V$ is $p$-adically complete and $V/p$ is free on $X''$ implies that $V$ is a finite free $\O^+$-module on~$X''$. This shows that $V$ is a finite locally free $\O^+$-module on $X_v$.
		
	\medskip
	
	It thus remains to prove \cref{t:generalised-reps-are-v-bundles}\eqref{i:generalised-Qp-reps} about generalised $\Q_p$-representations.
	It is clear from Theorem~\ref{t:loc-free-O^+-modules} that the data in the definition of generalised $\Q_p$-representations translates into gluing data for vector bundles on $X_{\proet}$. We thus have a fully faithful functor 
	\[\left\{ \text{generalised $\Q_p$-representations on $X_{\et}$}\right\}\lra \left\{ \text{finite locally free $\O$-modules on $X_{\proet}$}\right\}.\]
	
	This is essentially surjective if any finite locally free $\O$-modules on $X_{\proet}$ comes from a finite locally free $\O^+$-module on an \'etale cover, which is guaranteed by \cref{l:v-bundle-is-et-locally-small}.
\end{proof}

\section{\texorpdfstring{$\boldsymbol{G}$}{G}-torsors for rigid groups \texorpdfstring{$\boldsymbol{G}$}{G} on adic spaces}
We now pass from vector bundles to torsors under any rigid analytic group $G$, the $p$-adic analogue of a complex Lie group. We start by discussing some background on rigid analytic group varieties, since non-commutative rigid groups are not so common in the literature. For this reason, and to provide a reference for future articles, we discuss slightly more than is strictly necessary to prove the main result of this article.
\medskip

From now on, we assume that $K$ is a perfectoid field over $\Q_p.$ The characteristic $0$ assumption is necessary in this context to obtain a $p$-adic exponential.

\begin{Definition}
  By a rigid analytic group variety, or just \emph{rigid group}, we mean a group object $G$ in the category of adic spaces locally of topologically finite type over $\Spa(K,K^+)$.
 \end{Definition}

We refer to \cite[Sections~1.2--1.3]{Fargues-groupes-analytiques} for some background on rigid groups, some of which we recall below.
  Rigid groups have been studied primarily in the commutative case, \textit{e.g.}~in \cite{Lutkebohmert_structure_of_bounded}, but we do not assume $G$ to be commutative. We therefore write the group operation multiplicatively as $m\colon G\times G\to G$, and write $1\in G$ for the identity.
As before, we freely identify $G$ with its associated $v$-sheaf over $K$. This is harmless in characteristic $0$ due to the following. 

\begin{Lemma}[\textit{cf.} {\cite[Proposition 1]{Fargues-groupes-analytiques}}]\label{l:G-smooth}
	Any rigid group $G$ is smooth. Moreover, there is a rigid open subspace $1\in U\subseteq G$ for which there is an isomorphism $U\isomarrow \B^d$ of rigid spaces.
\end{Lemma}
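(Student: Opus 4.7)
The strategy is the standard Lie-theoretic one: reduce smoothness to the identity by translation, then use the characteristic zero hypothesis to linearise a formal neighbourhood of $1$ via the logarithm, and finally upgrade the formal logarithm to a convergent rigid analytic chart on a small ball.

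First I would prove smoothness. For every $K$-point $g\in G(K)$ the left translation $L_g:G\to G$, $x\mapsto gx$, is a rigid analytic automorphism, so the smooth locus of $G$ is stable under translation and it suffices to show that $G$ is smooth at $1$. Pass to the complete local ring $\widehat{\O}_{G,1}$, which inherits from $m:G\times G\to G$ a (possibly non-commutative) formal group law over $K$ of dimension $d:=\dim_K \mg$, where $\mg:=T_1 G$ is the Lie algebra. Because $\operatorname{char} K = 0$, the formal logarithm, with coefficients $(-1)^{n+1}/n$, is defined and, as in the classical proof of Cartier's theorem, linearises the formal group: one obtains an isomorphism of complete local rings $\widehat{\O}_{G,1}\isomarrow K[\![x_1,\dots,x_d]\!]$. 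Hence $G$ is smooth at $1$, and so smooth everywhere.

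Second, for the existence of a rigid open $1\in U\subseteq G$ with $U\isomarrow \B^d$, the plan is to realise the formal logarithm as an honest rigid analytic isomorphism on a small enough polydisc. Pick a basis of $\mg$ and an $\O_K$-lattice $\mg_0\subseteq \mg$; the exponential series $\exp(X)=\sum_{n\ge 0} X^n/n!$ then converges on the rigid ball $\{|X|<p^{-1/(p-1)}\}$, so after rescaling coordinates it defines a rigid analytic morphism $\exp:\B^d(r)\to G$ from some closed polydisc of radius $r$. The formal logarithm, whose series also converges $p$-adically in a neighbourhood of $1\in G$, provides a candidate inverse; on a possibly smaller polydisc these are mutually inverse rigid analytic maps, identifying $\B^d(r)$ with a rigid open neighbourhood $U$ of $1$. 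A further rescaling $T_i\mapsto r\, T_i$ gives the desired isomorphism $U\isomarrow \B^d$.

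The main obstacle is bookkeeping the radii of convergence: one must bound the Gauss norms of the coefficients of the group law expressed in exponential coordinates to verify that $\exp$ and $\log$ remain inverse to one another on a common polydisc, and in the non-commutative setting this is precisely where convergence of the Baker--Campbell--Hausdorff series is needed if one also wants to preserve the group structure. For the present lemma, however, one only needs an isomorphism of rigid spaces, not of rigid groups, so the step reduces to verifying that $\exp$ is an open immersion on a sufficiently small ball; this follows from its formal invertibility together with the maximum modulus principle applied to the coordinatewise expansions.
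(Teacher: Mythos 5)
The paper offers no proof of this lemma --- it is quoted directly from Fargues (Proposition 1 of \emph{Groupes analytiques rigides $p$-divisibles}) --- so there is no in-text argument to compare against. Your outline is the right one in spirit (Cartier in characteristic $0$, then a chart at $1$), but two steps as written have gaps.

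For smoothness, saying that $\widehat{\O}_{G,1}$ ``inherits a formal group law of dimension $d$'' is circular: a formal group law is by definition a comultiplication on $K[[x_1,\dots,x_d]]$, and the identification $\widehat{\O}_{G,1}\cong K[[x_1,\dots,x_d]]$ is exactly the regularity you are trying to establish. A priori $\widehat{\O}_{G,1}$ is only a cogroup object in complete Noetherian local $K$-algebras with residue field $K$; that this forces it to be a power series ring in characteristic $0$ is precisely the content of the formal Cartier theorem, which you should invoke as such rather than present as a consequence of already having a formal group law. Similarly, the ``formal logarithm with coefficients $(-1)^{n+1}/n$'' is the logarithm of $\widehat{\G}_m$, not of a general formal group; for a general group it is defined by integrating an invariant differential and has different coefficients.

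For the local chart, the series $\exp(X)=\sum X^n/n!$ makes no sense for an abstract rigid group $G$ --- there is no multiplication of elements of $G$, only composition in the group --- and for a general formal group the coefficients of $\exp$ are not $1/n!$; the correct setup is the one the paper spells out later via Bourbaki/Serre/Schneider in its discussion of the $p$-adic exponential. In fact the exponential is overkill for this statement: once $G$ is smooth of dimension $d$, the point $1\in G$ is a smooth $K$-rational point, and an \'etale chart $V\to\B^d$ sending $1\mapsto 0$ is, by henselianness of the rigid local ring $\O_{G,1}$ and triviality of the residue field extension, a local isomorphism near $1$; shrinking the target to a smaller polydisc and rescaling already gives $1\in U\isomarrow\B^d$, with no convergence bookkeeping and no use of the group law beyond translation invariance. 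So the second half can be made both correct and simpler by replacing the exponential by the rigid-analytic inverse function theorem.
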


\begin{Example}\label{ex:rigid-groups}\leavevmode
	\begin{enumerate}
\item 
For any algebraic group $G$ over $\Spec(K)$, we get an associated rigid analytic group by analytification. Again, we often identify $G$ with its analytification as well as with the associated $v$-sheaf. If $G$ is affine, then the latter can explicitly be described in terms of the algebraic group as being the sheaf $G(\O)$ on $\Perf_K$ sending $(R,R^+)$ to $G(R)$. We are particularly interested in the case $G=\GL_n$. 
If $G$ is not affine, the description of the $v$-sheaf is still true after sheafification. 

Two examples that we use frequently throughout are $G=\G_a$, which is the rigid affine line with its additive structure and represents $\O$, as well as $G=\G_m$, which is the rigid affine line punctured at the origin with its multiplicative structure and represents $\O^\times$.

More generally, for any finite-dimensional vector space $W$ over $K$, we have the rigid group $W\otimes_K\G_a$. We call any rigid group of this form a \emph{rigid vector group.}
\item Let $\mG$ be a smooth formal group scheme over $\Spf(K^+)$, \textit{i.e.}~a group object in the category of formal schemes locally of topologically finite presentation over $\Spf(K^+)$ that is smooth over $\Spf(K^+)$. Then the adic generic fibre $\mathcal G^{\ad}_{\eta}\to\Spa(K,K^+)$ in the sense of \cite[Section~2.2]{ScholzeWeinstein} is naturally a rigid group. 
We say that a rigid group $G$ over $K$ has \emph{good reduction} if it arises in this way, \textit{i.e.}~if there is a smooth formal group scheme $\mathcal G$ over $\Spf(K^+)$ whose adic generic fibre  is isomorphic as a rigid group to $G$.
\item Assume that $G$ is an algebraic group over $K$ that extends to a smooth algebraic group $G_{K^+}$ over~$K^+$.  For instance, by a theorem of Chevalley--Demazure, such a model always exists if $G$ is a split connected reductive group, namely the Chevalley model; see \cite[Corollaire XXV.1.2]{SGA3}. Consider the $p$-adic completion  $\mathcal G$ of $G_{K^+}$. Then the adic generic fibre of $\mathcal G$ is an open rigid subgroup $G^+\subseteq G$ that has good reduction. 

For $G=\G_a$ with its canonical extension to $K^+$, this construction yields the closed unit ball $\G_a^+\subseteq \G_a$ which represents the $v$-sheaf $\O^+$.
For $G=\GL_n$, it recovers the rigid open subgroup $\GL_n(\O^+)\subseteq \GL_n(\O)$ of integral matrices from the last section.
\end{enumerate}
\end{Example}

\subsection{The correspondence between rigid groups and Lie algebras}

For any rigid group $G$ over $(K,K^+)$, we denote by 
\[\mathfrak g:=\operatorname{Lie} G:=\ker\left(G(K[X]/X^2)\to G(K)\right)\]
the \emph{Lie algebra} of $G$, defined exactly as for algebraic groups (see also \cite[Section~1.2]{Fargues-groupes-analytiques}). Its underlying $K$-vector space is the tangent space of $G$ at the identity, so $\dim \mathfrak g=\dim G$. As for algebraic groups, we can  regard $\mathfrak g$ as a rigid vector group over $K$, explicitly given by $\operatorname{Lie} G\otimes_K \G_a$. As usual, we also consider this as a $v$-sheaf on $\Perf_K$, explicitly given for any perfectoid $K$-algebra $(R,R^+)$ by $\mathfrak g(R)=\ker(G(R[X]/X^2)\to G(R))$.
We shall therefore from now on write $\mathfrak g(K)$ when we mean the underlying $K$-vector space. 

One application of the $v$-sheaf perspective is that it immediately shows that we have a rigid analytic \emph{adjoint action}
\[ \ad\colon G\lra \GL(\mathfrak g),\]
a homomorphism of rigid groups sending $g\in G(R)$ to the $R$-linear automorphism of $\mathfrak g(R)$ induced on tangent spaces by the conjugation map $G\times_KR\to G\times_KR$ that sends $h$ to $g^{-1}hg$. On tangent spaces, this induces a map $\ad\colon \mg\to \End(\mathfrak g)$ which defines the Lie bracket on $\mg$.

\medskip

With these definitions, there is a $p$-adic analogue of the Lie group-Lie algebra correspondence in complex geometry,  the main difference to the complex case being that one needs to account for the fact that in the $p$-adic setting there are many more open subgroups.  While we do not know of a place in the literature where this is discussed in the present setting of rigid analytic groups, the construction is of course essentially classical as it follows immediately from the theory of $p$-adic Lie groups, as developed in \cite{Bourbaki-Lie,Serre_Lie,Schneider_Lie}. 

\begin{Theorem}[\textit{cf.} {\cite[Part~II]{Serre_Lie}}]\label{t:Lie-grp-Lie-alg-correspondence}
	Sending $G$ to $\mathrm{Lie}(G)$ defines a functor
	\[ \mathrm{Lie}\colon \{  \text{rigid groups over $K$}\}\lra \{\,\text{fin.-dim.\ Lie algebras over $K$}\}.\]
	This
	 becomes an equivalence of categories after localising the left-hand side at the class of homomorphisms of rigid groups that are open immersions.
	
	Moreover, for any homomorphism  $f\colon G\to H$ of rigid groups for which the morphism $\mathrm{Lie}(f)\colon \mathrm{Lie}(G)\to \mathrm{Lie}(H)$ is an isomorphism, there are rigid open subgroups $G_0\subseteq G$ and $H_0\subseteq H$ such that $f$ restricts to an isomorphism of rigid groups $G_0\to H_0$.
\end{Theorem}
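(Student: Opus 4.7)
The overall strategy is to reduce to the classical theory of $p$-adic Lie groups, using the $p$-adic exponential, logarithm and the Baker--Campbell--Hausdorff formula, while keeping track of rigid-analyticity throughout. For any finite dimensional Lie algebra $\mathfrak g$ over $K$, after choosing a $K^+$-lattice $\Lambda\subseteq \mathfrak g$, the BCH series $\mu(x,y)=x+y+\tfrac{1}{2}[x,y]+\cdots$ converges uniformly on a sufficiently small closed ball $p^\epsilon\Lambda\subseteq \mathfrak g$ and defines a rigid-analytic group law on it. This endows $p^\epsilon\Lambda$ with the structure of a rigid group whose Lie algebra is canonically identified with $\mathfrak g$, which handles essential surjectivity of the localised functor.

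For the ``moreover'' part, I would argue as follows. Since $G$ and $H$ are smooth by \cref{l:G-smooth} and $\operatorname{Lie}(f)$ is an isomorphism, the rigid-analytic inverse function theorem applied at the identity provides an open neighbourhood $U\subseteq G$ of $1$ such that $f|_U: U\to f(U)$ is an isomorphism of rigid spaces onto an open subspace of $H$. By the $p$-adic Lie group theory (or concretely by transporting a sufficiently small BCH-ball along the open embedding $\B^d\hookrightarrow G$ of \cref{l:G-smooth}), there is an open rigid subgroup $G_0\subseteq U$ of $G$. Set $H_0:=f(G_0)\subseteq H$: this is a subgroup because $G_0$ is and $f$ is a homomorphism, and it is open in $H$ because $f|_U$ is an open immersion. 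The restriction $f: G_0\to H_0$ is then an isomorphism of rigid groups, as desired.

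Full faithfulness of the localised functor is handled by a similar exponential/logarithm argument. Given a Lie algebra homomorphism $\varphi:\operatorname{Lie}(G)\to \operatorname{Lie}(H)$, I would shrink $G$ to a small open subgroup $G_0$ on which the logarithm $\log_G: G_0\isomarrow V_G\subseteq \operatorname{Lie}(G)$ is a rigid-analytic isomorphism, while simultaneously shrinking so that $\varphi(V_G)$ lies in the domain $V_H$ of an exponential $\exp_H: V_H\isomarrow \exp_H(V_H)\subseteq H$. The composition $f:=\exp_H\circ \varphi\circ \log_G: G_0\to H$ is a rigid analytic morphism whose tangent map at $1$ is $\varphi$; the fact that it is a group homomorphism follows because the group laws on $G_0$ and on $\exp_H(V_H)$ are both given by the BCH series, which is preserved by the Lie algebra homomorphism $\varphi$. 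Faithfulness, i.e.\ uniqueness of $f$ up to shrinking given $\operatorname{Lie}(f)=\varphi$, is forced by the fact that $\exp$ and $\log$ are mutually inverse rigid-analytic isomorphisms on small enough balls.

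The main technical obstacle is to verify rigid-analyticity of $\exp$, $\log$ and the BCH series in the correct sense: one needs to show that on a closed ball of positive $p$-adic radius these series converge uniformly, so that they define morphisms of affinoid rigid spaces and not merely pointwise maps on $K$-points. This follows from standard $p$-adic estimates on the Lie polynomials appearing in BCH, using that $K$ has characteristic zero with residue characteristic $p$, but requires some care to convert the classical locally analytic statements of Serre, Bourbaki and Schneider into the rigid-analytic framework used here.
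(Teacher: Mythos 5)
Your proposal is correct and rests on the same classical foundation as the paper's proof (smoothness of rigid groups via \cref{l:G-smooth}, the $p$-adic exponential/logarithm, and convergence of the BCH series), but the packaging is slightly different. The paper factors the functor $\mathrm{Lie}$ through the intermediate category of formal group laws, writing $\mathrm{Lie}=S\circ R$ where $R$ is completion at the identity and $S$ is the tangent-space functor, and then cites Serre's results directly: essential surjectivity of $R$ from \cite[II, 4, \S8, Theorem~1]{Serre_Lie}, fullness after localisation from \cite[II, 5, \S7, Theorem~1]{Serre_Lie}, faithfulness from Zariski-density of open subgroups in connected rigid groups, and the ``moreover'' statement from \cite[II, 5, \S7, Corollary~1.2]{Serre_Lie}. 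You instead unwind what those theorems say in terms of $\exp$, $\log$ and BCH on small balls, and for the ``moreover'' part you invoke the rigid-analytic inverse function theorem rather than quoting Serre. Both routes are valid; the paper's factorisation is more economical because it offloads all convergence estimates to the cited classical results, whereas yours is more self-contained but forces you to carry out (or at least gesture at) the uniform convergence of the BCH Lie polynomials on closed balls of positive radius, which you correctly flag as the main technical point. One small thing to tighten: for faithfulness you should say explicitly that any homomorphism of rigid groups intertwines $\exp$ and $\log$ on sufficiently small neighbourhoods (this is functoriality of $\exp$, cf.\ \cref{p:general-exponential}.2), from which $f=\exp_H\circ\operatorname{Lie}(f)\circ\log_G$ near the identity and hence uniqueness follows; the paper instead deduces faithfulness from the observation that a homomorphism out of a connected rigid group is determined by its restriction to any open subgroup by Zariski-density, which sidesteps $\exp$ entirely.
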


\begin{proof}
	Due to \cref{l:G-smooth}, there are open neighbourhoods of $1\in G$ and $1\in H$ inside of which rigid open subdiscs around $1$ with morphisms of rigid spaces between them are equivalent to open subdiscs of $G(K)$ and $H(K)$ centred at $1$ and analytic maps between them in the sense of $p$-adic Lie groups as defined in \cite[Part~II, Chapter~II]{Serre_Lie}. In particular, after localisation the category on the left becomes  equivalent to Serre's ``analytic group chunks''.
	
	Again by \cref{l:G-smooth}, the completion $\wh G$ of $G$ at $1$ is isomorphic as a formal scheme to $\Spf(K[[X_1,\dots,X_d]])$ and inherits the structure of a formal group law. This defines a functor
	\[R\colon\{\text{$d$-dim.\ rigid groups over $K$}\}\lra \{\text{$d$-dim.\ formal group laws over $K$}\}.\]
	On the other hand, sending a formal group law $H$ over $K$ to the tangent space $\mathfrak h$ at the origin defines an equivalence of categories, see \cite[Part II, Chapter V.6]{Serre_Lie}, 
	\[S\colon\{\text{$d$-dim.\ formal group laws over $K$}\}\lra \{\text{$d$-dim.\ Lie algebras over $K$}\}.\] 
	The composition is easily seen to coincide with $\mathrm{Lie}$. 
	
	It thus suffices to prove the results for the functor $R$: that $R$ is essentially surjective follows from \cite[Part II, Chapter IV.8, Theorem~1]{Serre_Lie} (or \cite[Proposition 17.6]{Schneider_Lie}). It becomes full after the localisation by \cite[Part II, Chapter V.7, Theorem~1]{Serre_Lie}. It becomes faithful after the localisation because any morphism from a connected rigid group is determined on any open subgroup by Zariski density.
	
	The last sentence follows from \cite[Part II, Chapter V.7, Corollary~1.2]{Serre_Lie}.   For an alternative proof of this statement, see also \cite[Section~1, Lemmes 1 et~2]{Fargues-groupes-analytiques}.
\end{proof}

\subsection{The $\boldsymbol{p}$-adic exponential of a rigid group $\boldsymbol{G}$}\label{s:exp-log}
For the rest of this section, we fix any rigid group $G$ over $(K,K^+)$. 
We now give a brief account of the $p$-adic exponential in the non-commutative setting of rigid groups:  exactly like for \cref{t:Lie-grp-Lie-alg-correspondence}, this  is essentially a translation of a classical result from the theory of $p$-adic Lie groups into the setting of rigid groups. \looseness=-1

\begin{Proposition}\label{p:general-exponential}
	Let $G$ be a rigid group over $K$ and $\mathfrak g$ its Lie algebra. Then there is an open $\O_K$-linear subgroup $\mathfrak g^{\circ}\subseteq \mathfrak g$ for which there is a unique open immersion of rigid spaces
	\[ \exp\colon\mathfrak g^{\circ}\lra G\]
	with $\exp(0)=1$ that induces the identity $\mathfrak g\to \mathfrak g$ on tangent spaces and makes the diagram
	\[\begin{tikzcd}
		\mathfrak g^{\circ}\times \mathfrak g^{\circ} \arrow[d, "\mathrm{BCH}"'] \arrow[r, "\exp"] & G\times G \arrow[d, "m"] \\
		\mathfrak g^{\circ} \arrow[r, "\exp"] & G
	\end{tikzcd}\]
	commute,
	where $\mathrm{BCH}$ is the Baker--Campbell--Hausdorff formula \[\mathrm{BCH}(x,y)=x+y+\tfrac{1}{2}[x,y]+\tfrac{1}{12}[x-y,[x,y]]+\cdots.\]
        It has the following properties:
	\begin{enumerate}
		\item\label{p:ge-1} For any open subgroup $\mathfrak g_1\subseteq \mathfrak g^{\circ}$, the map $\exp\colon \mathfrak g_1\to G$ is an isomorphism of rigid spaces onto an open subgroup of\, $G$ $($but not necessarily a homomorphism$)$.
		\item\label{p:ge-2} 
	 $\exp$ is functorial in $G$ $($i.e.\ after shrinking $\mg^{\circ}$, the obvious diagram commutes$)$.
	\end{enumerate}
\end{Proposition}

\begin{Definition}
	In particular, part~\eqref{p:ge-1} says that $\exp\colon \mg^\circ\to G$ is an isomorphism onto an open subgroup $G^\circ\subseteq G$. We denote the inverse by $\log\colon G^\circ\to \mg^\circ$.
\end{Definition}

\begin{Remark}\label{ex:exp}\leavevmode
	\begin{enumerate}
		\item 
		If $G$ is commutative, then $\mathrm{BCH}(x,y)=x+y$ and the diagram says that $\exp$ is an isomorphism of rigid groups. This case is discussed in \cite[Section~1.5]{Fargues-groupes-analytiques}.
	      \item There is in general no canonical way to choose the subgroup $\mathfrak g^\circ$: already if $G=\GL(W)$ for some finite-dimensional $K$-vector space $W$, we need a basis of $W$ to get $\mathfrak g^\circ$. In the following, we shall therefore fix a choice of $\mathfrak g^\circ$ and thus $G^\circ$, but this choice will be harmless as we will always be free to replace $\mathfrak g^\circ$ and $G^\circ$ by open subgroups. More canonically, one could consider the filtered  system of all open subgroups on which $\exp$ is defined.
		\item For $G=\GL_n$, the exponential can be explicitly described by the usual formulas: we have $\mg=M_n$, and for any uniform Huber pair  $(R,R^+)$ over $(K,K^+)$, the $p$-adic exponential and logarithm series define continuous homomorphisms
		\begin{alignat*}{3}
			\exp&\colon&\mg^\circ:=p^{\alpha_0}\m M_n(R^+)&\lra 1+p^{\alpha_0}\m M_n(R^+),\quad & x&\longmapsto \textstyle\sum_{n=0}^\infty \frac{x^n}{n!},\\
			\log&\colon&1+\mathfrak mM_n(R^+)&\lra M_n(R),\quad & 1+x&\longmapsto -\textstyle\sum_{n=1}^\infty \frac{(-x)^n}{n}
		\end{alignat*}
		which are mutually inverse when restricted to the domain and codomain of the exponential, where we set $\alpha_0:=1/(p-1)$ if $p>2$ and $\alpha_0=1/4$ otherwise. More generally, this describes $\exp$ for linear algebraic groups, or more generally any rigid group $G$ that admits an injective homomorphism $G\hookrightarrow \GL_n$: the maps $\exp$ and $\log$ restrict to the closed subgroups $\mg^\circ:=\mg\cap p^{\alpha_0}\m M_n(\O^+)$ and $G^\circ:=G\cap (1+\mathfrak p^{\alpha_0}\m M_n(\O^+))$.
	\end{enumerate}
\end{Remark}

\begin{proof}
	This is developed in \cite[Chapitre~II]{Bourbaki-Lie} and \cite[Corollary~III.18.19]{Schneider_Lie} for $p$-adic Lie groups instead of rigid groups, but since $p$-adic Lie groups are locally analytic, one can obtain from this a morphism of rigid groups on a neighbourhood of the identity.	
	We sketch the proof.  
	
	We use the functor $S$ already mentioned in the proof of \cref{t:Lie-grp-Lie-alg-correspondence}.
	Its inverse is given by sending a Lie algebra $\mathfrak h$ to the formal group law $F_{\mathfrak h}:=U(\mathfrak h)^{\ast}$, where $U(\mathfrak h)^{\ast}$ is the $K$-linear dual of the universal enveloping algebra with its natural co-algebra structure.
	
	Applying this construction to $\mathfrak g$,  Poincar\'e--Birkhoff--Witt's theorem  identifies $U(\mathfrak g)^{\ast}$ with the completed symmetric algebra $K[[\mathfrak g^{\ast}]]$ of $\mg^\ast$; see \cite[Proposition~18.3]{Schneider_Lie}. This defines an isomorphism of formal schemes $\psi\colon F_{\mg}\to \widehat{\G}_a\otimes \mg$. Then $\mathrm{BCH}$ expresses the formal group law on $\widehat{\G}_a\otimes \mg$ transported from  $F_{\mg}$ via $\psi$; see \cite[Corollary~18.15]{Schneider_Lie}.
	
	The crucial calculation is now that $\mathrm{BCH}$ converges on an open ball in $\mg$: namely, choose any $K$-basis of $\mathfrak g$, and for $k\geq 0$ let $\mg^\circ_{k}$ be the $p^{k}\m\G_a^+$-span of the basis vectors, considered as an open ball. Then by \cite[Proposition~17.6]{Schneider_Lie}, $\mathrm{BCH}$ defines for $k\gg 0$ a function
	\[\mathrm{BCH}\colon\mg^\circ_{k}\times \mg^\circ_{k}\lra \mg^\circ_{k}\]
	that endows $\mg^\circ_{k}$ with the structure of a rigid group  with Lie algebra $\mathfrak g$. Let us call this $\mg^\circ_{k,\BCH}$.
	
	By the above equivalence of categories $S$, there is now a natural isomorphism between the formal groups associated to  $\mg^\circ_{k,\BCH}$ and $G$ that is the identity on tangent spaces. By \cref{t:Lie-grp-Lie-alg-correspondence} and its proof, this is analytic in a neighbourhood of the identity, \textit{i.e.}~for $k\gg 0$ there is an open subgroup $G^\circ\subseteq G$ for which there is an isomorphism of rigid groups $\exp\colon \mg^\circ_{k,\BCH}\isomarrow G^\circ$. It is clear from the construction that this is functorial.
\end{proof}

\begin{Corollary}\label{c:neighbourhood-basis-good-reduction}
	Any rigid group $G$ has a neighbourhood basis  of the identity  $(G_k)_{k\in\N}$ that consists of open subgroups $G_k\subseteq G$ whose underlying rigid space is isomorphic to the closed ball\, $\B^d$ of dimension $d=\dim G$. In particular, each $G_k$ has good reduction.
\end{Corollary}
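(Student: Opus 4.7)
The plan is to combine the $p$-adic exponential of \cref{p:general-exponential} with a choice of standard neighbourhood basis on the Lie algebra side, and then to check that the resulting open subgroups extend to smooth formal group schemes over $\Spf(K^+)$.

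First I would fix any $K$-basis $e_1,\dots,e_d$ of $\mathfrak g(K)$, which identifies the rigid vector group $\mathfrak g$ with $\G_a^d$. For each $k\in\N$ (eventually $k\gg 0$) let
\[ \mathfrak g_k := p^k\mathfrak m \G_a^+\cdot e_1\oplus\cdots\oplus p^k\mathfrak m \G_a^+\cdot e_d\subseteq \mathfrak g\]
be the resulting open $\O_K$-linear subgroup. As a rigid space each $\mathfrak g_k$ is manifestly isomorphic to $\B^d$, and the $\mathfrak g_k$ form a decreasing neighbourhood basis of $0\in\mathfrak g$.

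Next I would transport this family through the exponential. By \cref{p:general-exponential}, for all sufficiently large $k$ we have $\mathfrak g_k\subseteq \mathfrak g^\circ$, and part~(1) of that proposition then says that $\exp\colon\mathfrak g_k\to G$ is an isomorphism of rigid spaces onto an open subgroup $G_k:=\exp(\mathfrak g_k)\subseteq G$. Since $\exp$ is a homeomorphism onto its image and the $\mathfrak g_k$ form a neighbourhood basis of $0$, the $G_k$ form a neighbourhood basis of $1\in G$; by construction each $G_k$ is isomorphic to $\B^d$ as a rigid space.

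It remains to see that each $G_k$ has good reduction. Because $G_k\cong\B^d=\Spa(K\langle T_1,\dots,T_d\rangle)$, we have the tautological smooth formal model $\mathcal G_k:=\Spf(K^+\langle T_1,\dots,T_d\rangle)$; the task is to show that under this identification the multiplication map $m:G_k\times G_k\to G_k$ extends to a morphism of formal schemes over $\Spf(K^+)$. By the commutative square in \cref{p:general-exponential}, $m$ corresponds to the Baker--Campbell--Hausdorff series $\mathrm{BCH}:\mathfrak g_k\times\mathfrak g_k\to\mathfrak g_k$ in the chosen coordinates. The key input is therefore the classical integrality estimate for BCH: for $k$ sufficiently large, the BCH series converges on $\mathfrak g_k\times\mathfrak g_k$ with coefficients that are integral when expressed in the coordinates on $\mathfrak g_k\cong\B^d$, so the multiplication, inversion, and unit map are all defined over $K^+$. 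This is the content of the convergence estimates in \cite[II,\S4--5]{Serre_Lie} and \cite[\S17]{Schneider_Lie} underlying the proof of \cref{p:general-exponential}, and is already explicit in the case $G=\GL_n$ in \cref{ex:exp}.(3). The hard part is thus not the geometry but this bookkeeping: one must verify that by possibly shrinking $k$ further than needed for mere convergence of $\mathrm{BCH}$ on $\mathfrak g_k$, the image of $\mathrm{BCH}$ actually lands in $\mathfrak g_k$ (rather than in some larger $\mathfrak g^\circ$), so that $m$ preserves integral sections and $\mathcal G_k$ inherits a formal group law making $G_k$ its adic generic fibre.
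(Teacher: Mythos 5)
Your overall strategy (transport a neighbourhood basis of $0\in\mathfrak g$ through $\exp$) is the same as the paper's, but there are two real issues.

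First, your choice of $\mathfrak g_k$ is wrong: with $K$ a perfectoid field (as assumed throughout Section 3), the ideal $p^k\mathfrak m$ has no maximal absolute value, so $p^k\mathfrak m\,\G_a^+\cdot e_1\oplus\cdots\oplus p^k\mathfrak m\,\G_a^+\cdot e_d$ is an \emph{open} polydisc, not the closed ball $\B^d=\Spa(K\langle T_1,\dots,T_d\rangle)$ that the Corollary asks for. (Open balls are not even affinoid.) What you want is the $p^k\O_K$-lattice spanned by the $e_i$, i.e.\ $\mathfrak g_k:=p^k\G_a^+\cdot e_1\oplus\cdots\oplus p^k\G_a^+\cdot e_d$, which really is isomorphic to $\B^d$ and still gives a neighbourhood basis of open subgroups contained in $\mathfrak g^\circ$ for $k\gg 0$. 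You have conflated the open balls $\mathfrak g^\circ_k=p^k\mathfrak m\,\G_a^{+d}$ used inside the proof of \cref{p:general-exponential} (where openness is what one wants) with the closed balls needed here.

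Second, the bookkeeping about integrality of the BCH coefficients, which you present as the hard part, is not needed at all. Once you know $G_k\cong\B^d$ is a rigid open \emph{subgroup}, the multiplication $m\colon G_k\times G_k\to G_k$ is a morphism of affinoid adic spaces, and any such morphism carries $\O^+$ to $\O^+$ by definition of a morphism of adic spaces. Since $\O^+(\B^d)=K^+\langle T_1,\dots,T_d\rangle$ and $\O^+(\B^d\times\B^d)=K^+\langle T_1,\dots,T_d,S_1,\dots,S_d\rangle$, taking $\O^+$ of $m$ (and likewise of inversion and the unit) immediately equips $\Spf(K^+\langle T_1,\dots,T_d\rangle)$ with a smooth formal group scheme structure having $G_k$ as adic generic fibre. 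This is exactly the paper's one-line argument; no convergence estimate for $\mathrm{BCH}$ beyond what \cref{p:general-exponential} already supplies is required, and the worry that $\mathrm{BCH}$ might not land in $\mathfrak g_k$ cannot arise once one knows $G_k$ is a subgroup, since $m$ a priori preserves $G_k$.
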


\begin{proof}
	This is true for $\mathfrak g$, which is isomorphic as a rigid group to $\G_a^d$. By \cref{p:general-exponential}\eqref{p:ge-1}, it then also holds for $G$. The group $G_k$ has good reduction because taking $\O^+$ of $m\colon G_k\times G_k\to G_k$ defines a formal group scheme structure on the unit ball over $\Spf(K^+)$.
\end{proof}

One way to make $\exp$ explicit is to use the following consequence of  \Cref{t:Lie-grp-Lie-alg-correspondence}. 

\begin{Corollary}
	For any rigid group $G$, there exist an $n\geq 0$ and an open subgroup $G^\circ\subseteq G$ that admits a homomorphism of rigid groups $G^\circ\hookrightarrow \GL_n$ that is a locally closed immersion.
\end{Corollary}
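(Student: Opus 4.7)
The plan is to combine Ado's theorem on the Lie-algebra side with the Lie correspondence \cref{t:Lie-grp-Lie-alg-correspondence} and the functoriality of the $p$-adic exponential in \cref{p:general-exponential}.

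Since $K$ has characteristic $0$, Ado's theorem for finite-dimensional Lie algebras yields an injective Lie algebra homomorphism $\iota\colon \mg \hookrightarrow \operatorname{Lie}(\GL_n)$ for some $n\geq 0$; this is the one non-trivial classical input. Note that using only the adjoint representation $\ad\colon G\to \GL(\mg)$ does not suffice, since its kernel contains the centre of $G$, which is in general non-trivial.

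Next I would apply \cref{t:Lie-grp-Lie-alg-correspondence}: since $\operatorname{Lie}$ becomes an equivalence after localising at the class of open immersions of rigid groups, the morphism $\iota$ is represented by a roof $G \hookleftarrow G^\circ \xrightarrow{f} \GL_n$ consisting of an open subgroup $G^\circ \subseteq G$ together with an honest homomorphism of rigid groups $f$ satisfying $\operatorname{Lie}(f)=\iota$. It remains to shrink $G^\circ$ so that $f$ is a locally closed immersion. For this I would use the exponentials $\exp_G\colon \mg^\circ \to G$ and $\exp_{\GL_n}\colon \operatorname{Lie}(\GL_n)^\circ \to \GL_n$ from \cref{p:general-exponential}, first shrinking $\mg^\circ$ so that $\iota(\mg^\circ) \subseteq \operatorname{Lie}(\GL_n)^\circ$ and so that the functoriality diagram of \cref{p:general-exponential} applies to $f$. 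This yields a commutative square
\[
\begin{tikzcd}
\mg^\circ \arrow[r, hook, "\iota"] \arrow[d, "\exp_G"'] & \operatorname{Lie}(\GL_n)^\circ \arrow[d, "\exp_{\GL_n}"] \\
\exp_G(\mg^\circ) \arrow[r, "f"] & \GL_n^\circ
\end{tikzcd}
\]
whose vertical arrows are isomorphisms of rigid spaces. Because $\iota$ is a $K$-linear injection of rigid vector groups, its restriction $\mg^\circ\to \operatorname{Lie}(\GL_n)^\circ$ is a closed immersion: choosing a $K$-linear complement of $\iota(\mg)$ in $\operatorname{Lie}(\GL_n)$ and suitable coordinates on $\operatorname{Lie}(\GL_n)^\circ\cong \B^{n^2}$, the image is cut out by the vanishing of the complementary coordinates. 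Transporting through the exponentials, the restriction of $f$ to the open subgroup $\exp_G(\mg^\circ) \subseteq G$ is a closed immersion into the open subspace $\GL_n^\circ \subseteq \GL_n$, i.e.\ a locally closed immersion, which is what we wanted.

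The main obstacle is really just the appeal to Ado's theorem; the remaining steps are formal and use only the exponential and Lie correspondence already set up in the preceding subsections. A minor point to verify is that after each shrinking the set $\exp_G(\mg^\circ)$ is still an open subgroup of $G$, which is guaranteed by \cref{p:general-exponential}.(i).
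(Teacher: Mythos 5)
Your approach is the same as the paper's: Ado's theorem on the Lie-algebra side, the Lie correspondence (\cref{t:Lie-grp-Lie-alg-correspondence}) to lift $\iota$ to a homomorphism on an open subgroup, and functoriality of $\exp$ (\cref{p:general-exponential}) to identify $f$ with the linear map in exponential coordinates. The paper's proof is essentially these three sentences, stated very tersely, so you have filled in the right details.

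There is, however, one imprecise step worth flagging. You claim that after shrinking $\mg^\circ$ so that $\iota(\mg^\circ)\subseteq \operatorname{Lie}(\GL_n)^\circ$, the restriction $\iota\colon\mg^\circ\to \operatorname{Lie}(\GL_n)^\circ$ is already a \emph{closed} immersion, ``cut out by the vanishing of the complementary coordinates.'' This is not true for an arbitrary such shrinking: in coordinates adapted to a complement $W$, the image $\iota(\mg^\circ)$ is the locus $\{W=0\}\cap\{|x_i|\le \rho\}$ for some radius $\rho$ that will in general be strictly smaller than the radius of $\operatorname{Lie}(\GL_n)^\circ$ in the $\iota(\mg)$-directions; the extra inequality $|x_i|\le\rho$ is a rational-open, not a Zariski-closed, condition. (Already $p\O_K\subseteq \O_K$ is not Zariski closed in $\B^1$.) The fix is easy and still lands where you want: also replace $\operatorname{Lie}(\GL_n)^\circ$ by the smaller polydisc $\iota(\mg^\circ)\oplus W^\circ$ for a suitably small ball $W^\circ\subseteq W$; this is an open subgroup of $(\operatorname{Lie}(\GL_n),+)$ contained in $\operatorname{Lie}(\GL_n)^\circ$, so by \cref{p:general-exponential}.(i) $\exp_{\GL_n}$ still restricts to an open immersion there, and now $\iota(\mg^\circ)$ genuinely is cut out by $W=0$. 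Transporting through the exponentials as you do then gives a closed immersion of $\exp_G(\mg^\circ)$ into an open subspace of $\GL_n$, hence a locally closed immersion. So the conclusion stands; only the intermediate ``closed into $\operatorname{Lie}(\GL_n)^\circ$'' assertion needs this extra shrink of the target.
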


\begin{proof}
	By Ado's theorem, there is a faithful representation $\operatorname{Lie} G\to \operatorname{Lie}(\GL_n)$ for some $n$. By \Cref{t:Lie-grp-Lie-alg-correspondence}, it follows that this comes from a morphism $\rho\colon G^\circ\to \GL_n$ for some open subgroup $G^\circ\subseteq G$. It follows from \Cref{p:general-exponential}\eqref{p:ge-2} that $\rho$ is Zariski closed.
\end{proof}

In particular, one can then describe $\exp$ on $G_k$ using the explicit formulas in \cref{ex:exp}.

We now record some more properties of $\exp$ and $\log$ that we do not need in this article, but that fit naturally in the discussion and that we  use in \cite{heuer-sheafified-paCS} in order to compute $R^1\nu_{\ast}G$.

\begin{Lemma}\label{l:exp-on-matrix}
	Let $(R,R^+)$ be any $(K,K^+)$-Banach algebra.
	\begin{enumerate}
		\item\label{l:eom-1}
		If $A,B\in \mathfrak g^\circ(R)$ satisfy $[A,B]=0$, then $\exp(A)$ and $\exp(B)$ commute in $G(R)$ and 
		\[\exp(A+B)=\exp(A)\exp(B).\]
		\item\label{l:eom-2} If $g,h\in G^\circ(R)$ satisfy $gh=hg$, then $[\log(g),\log(h)]=0$ and 
		\[\log(gh)=\log(g)+\log(h).\]
		\item\label{l:eom-3} Let $\mathfrak g_1\subseteq\mathfrak g_2\subseteq \mg^\circ$, and let $G_1\subseteq G_2\subseteq G$ be their images under $\exp$. If $g\in G(R)$ is such that $\ad(g)$ sends $\mathfrak g_{1}$ into $\mathfrak g_{2}$, then conjugation by $g$ sends $G_{1}$ into $G_{2}$ and for any $A\in \mg_{1}(R)$ we have
		\[ \exp(\ad(g)(A))=g^{-1}\exp(A)g.\]
	\end{enumerate}
\end{Lemma}

\begin{proof}
	For the first part, if $[A,B]=0$, then $\mathrm{BCH}(A,B)=A+B$, so this follows from the diagram in \cref{p:general-exponential}.
	
	For part~\eqref{l:eom-3}, we can reduce to the universal situation and replace $R$ by a reduced affinoid $K$-algebra. Then we can check the statement on $K$-points. Here it follows from the functoriality of $\exp$ applied to the conjugation $G\to G$, $h\mapsto g^{-1}hg$.
	
	For part~\eqref{l:eom-2}, we deduce from part~\eqref{l:eom-1} that $\log([p^n]g)=p^n\log(g)$. Since it suffices to prove that $p^n\log(g)$ and $p^n\log(h)$ commute, we may therefore shrink $G^\circ$ and may thus assume by the functoriality of $\exp$ that the restriction of $\ad$ to $G^\circ$ fits into a commutative diagram
	\[\begin{tikzcd}
		G^\circ \arrow[r, "\ad"] \arrow[d, "\log"] & \GL(\mathfrak g)^\circ \arrow[d, "\log"] \\
		\mathfrak g^\circ \arrow[r, "\ad"] & \End(\mathfrak g)^\circ\rlap{.}
	\end{tikzcd}\]
	By part~\eqref{l:eom-3}, we know that $\ad(g)(\log h)=\log(g^{-1}hg)=\log h$. It follows that
	\[ [\log(g),\log(h)]=\log(\ad(g))(\log h)=-\sum_{n=1}^\infty\frac{(1-\ad(g))^n}{n}(\log h)=0,\]
	where the first equality comes from the diagram since the bottom row defines $[-,-]$.
	
	The displayed formula now follows from applying $\log $ to that from part~\eqref{l:eom-1}.
\end{proof}

For $G=\GL_n$, with notation as in \cref{ex:exp}, we slightly more generally have the following. 

\begin{Lemma}\label{l:log-on-matrix}
  Let $A$, $B\in p^{\alpha_0}\m M_n(R^+)$ and $N\in M_n(R)$. The following are equivalent:
  
		\begin{enumerate}
			\item\label{l:lom-1} $AN=NB$, 
			\item \label{i:exp-commnutes}$\exp(A)N=N\exp(B)$.
		\end{enumerate}
\end{Lemma}

\begin{proof}
\eqref{l:lom-1}~$\Rightarrow$~\eqref{i:exp-commnutes} is clear from the formula for $\exp$.  To see \eqref{i:exp-commnutes}~$\Rightarrow$~\eqref{l:lom-1}, we can check the equality on points, which reduces us to the case of $(R,R^+)=(K,K^+)$.
	
	If $N$ is invertible, then $N^{-1}\exp(A)N=\exp(B)\in 1+p^{\alpha_0}\m M_n(R^+)$ and the statement follows from applying $\log$.
	In general,  $\eqref{i:exp-commnutes}$ implies that $\exp(B)$ preserves $\ker N$, which is equivalent to $B$ preserving $\ker N$. In particular, either defines an operator on $\coim(N)$. Similarly, $\exp(A)$ preserves $\im(N)$, which is equivalent to $A$ preserving $\im(N)$. It follows that we can reduce to the  map $N\colon \coim(N)\to \im(N)$, which is an isomorphism.
\end{proof}

\subsection{$\boldsymbol{G}$-torsors in the \'etale and $\boldsymbol{v}$-topology}\label{s:G-torsors}
The theme of this section is that we pass from vector bundles to $G$-torsors. There are several ways to define these, and we shall mainly use the following. For now, we can allow $X$ to be any \'etale-sheafy adic space over a non-Archimedean field $K$ over $\Q_p$.

\begin{Definition}\label{d:G-torsors}
	Consider $X_{\tau}$ for $\tau$ one of $\et$ or $v$. Then by a \emph{$G$-torsor} $P$ on $X_{\tau}$, we mean a \emph{cohomological $G$-torsor} on the site $X_{\tau}$, where $G$ is regarded as a sheaf by sending $Y\in X_\tau$ to the morphisms of adic spaces $Y\to G$ over $K$. Explicitly, $P$ is a sheaf on $X_{\tau}$ with a right action $m\colon P\times G\to P$ by $G$ such that $\tau$-locally on $X$, there is a $G$-equivariant isomorphism $G\isomarrow P$. The morphisms of $G$-bundles are the $G$-equivariant morphisms of sheaves on $X_{\tau}$.
	
	We shall also refer to $G$-torsors on $X_{\et}$ as \emph{\'etale $G$-torsors}, and  to $G$-torsors on $X_{v}$ as \emph{$v$-$G$-torsors}.
	In either case, we say that the torsor is trivial if it is globally on $X_\tau$ isomorphic to $G$. 
	It is clear from this definition that up to isomorphism $G$-torsors on $X_{\tau}$  are classified by the non-abelian sheaf cohomology set 
	$H^1_{\tau}(X,G)$, with the distinguished point corresponding to the trivial torsor.
	
	We can make the analogous definition when $X$ is instead a locally spatial diamond with $X_\tau$ being either $X_\et$ or $X_v$, and where $G$ is the sheaf on $X_\tau$ represented by $G^\diamondsuit$ in $\Dmd_{K,v}$.
\end{Definition}

\begin{Remark}\label{r:GL_n-torsor-vs-vector-torsor}
	For $G=\GL_n$, we recall that there is a natural functor from $\GL_n$-torsors on $X_{\tau}$ to $\tau$-vector bundles (\textit{i.e.}~finite locally free $\O$-modules on $X_\tau$) of rank $n$ that is essentially surjective but not fully faithful: the morphisms of $\GL_n$-torsors are precisely the isomorphisms of vector bundles. In particular, the difference is harmless as long as we are only concerned with isomorphisms, for example in the context of moduli stacks.
\end{Remark}

As usual, there is also a geometric perspective on $G$-torsors. 

\begin{Definition}\label{d:geom-G-torsor} Let $X$ be an \'etale-sheafy adic space over $K$, and let $\tau$ be one of $\et$ or $v$.
	Then a \emph{geometric $G$-torsor} on $X_{\tau}$ is a morphism $E\to X$ of sheaves on $X_\tau$ with a right $G$-action $E\times G\to E$ over $X$ such that there is a cover $X'\to X$ in $X_\tau$ with a $G$-equivariant isomorphism $X'\times G\isomarrow X'\times_XG$ over $X'$. The morphisms of geometric $G$-torsors are the $G$-equivariant morphisms of sheaves on $X_\tau$.
	
	We can make the analogous definition when $X$ is instead a locally spatial diamond, where $X_\tau$ is either $X_\et$ or $X_v$, and $G$ is seen as the sheaf on $X_\tau$ represented by $G^\diamondsuit$ in $\Dmd_{K,v}$.
\end{Definition}

\begin{Remark}\leavevmode
	\begin{enumerate}
		\item We caution that both in \Cref{d:G-torsors} and in~\Cref{d:geom-G-torsor}, depending on the space $X$, it can in general make a difference whether we consider $G$-torsors on $X_\et$ or $G$-torsors on $X^\diamondsuit_{\et}$, because in general the structure sheaves are not identified via the equivalence $X_\et=X^\diamondsuit_{\et}$. For example, for the rigid space $X=\Spa(K[X]/X^2)$, the automorphisms of the trivial $\G_a$-torsor on $X_\et$ are given by $\O_{X_\et}(X)=K[X]/X^2$, whereas the automorphisms of the trivial $\G_a$-torsor on $X^\diamondsuit_\et$ are given by $\O_{X^\diamondsuit_\et}(X^\diamondsuit)=K$ because perfectoid spaces do not see the non-reduced structure.
		
	\item While it may be true for an adic space $X$ that geometric $G$-torsors on $X_{\et}$ are represented by adic spaces, it is definitely not the case that  geometric $G$-torsors on $X_{v}$ are always represented by adic spaces. However, we will see in \cref{c:G-torsor-is-diamond} that they are still always diamonds, which is not immediately obvious from the definition. In particular, when $X$ is a locally spatial diamond and $\tau=v$, one gets an equivalent definition if one takes $E\to X$ to be a morphism of diamonds instead of $v$-sheaves.
	\end{enumerate}
\end{Remark}

\begin{Proposition}\label{p:fully-faithful-G-torsors-et-to-v}\leavevmode
	\begin{enumerate}
		\item\label{p:ffGtetv-1} Let $X$  be an \'etale-sheafy adic space over $K$, or let $X$ be a locally spatial diamond.
	The functor sending a geometric $G$-torsor $E\to X$ to its sheaf of sections $E\leftarrow X$ defines an equivalence of categories
	\[\{\text{geometric $G$-torsors on $X_{\tau}$}\}\longisomarrow\{\text{cohomological $G$-torsors on $X_{\tau}$}\}.\]
	Any morphism in either category is an isomorphism.
	\item\label{p:ffGtetv-2} When $X$ is a locally spatial diamond, there is a natural fully faithful functor
	\[ \{\text{$G$-torsors on $X_{\et}$}\}\longhookrightarrow\{\text{$G$-torsors on $X_{v}$}\}\]
	given by sending a $G$-torsor on $X_{\et}$ to the $v$-sheaf of sections of its associated geometric $G$-torsor. On isomorphism classes, this functor is given by the natural map
        \[H^1_{\et}(X,G)\lra H^1_{v}(X,G).\]
	\item\label{p:ffGtetv-3} When $X$ is an \'etale-sheafy adic space, diamondification defines a functor
	\[ \{\text{$G$-torsors on $X_{\et}$}\}\lra\left\{\text{$G^\diamondsuit$-torsors on $X_{\et}^\diamondsuit$}\right\}.\]
	\item\label{p:ffGtetv-4} Let $X$ be an \'etale-sheafy adic space such that $\nu\colon X_v\to X_\et$ satisfies $\nu_{\ast}\O=\O$; for example, this holds when~$X$ is perfectoid or a semi-normal rigid space. Then the functor from~\eqref{p:ffGtetv-3} is an equivalence. In particular, we then have a fully faithful functor
	\[\{\text{$G$-torsors on $X_{\et}$}\}\longhookrightarrow\{\text{$G$-torsors on $X_{v}$}\}.\]
	\end{enumerate}
\end{Proposition}

As one can already see  from the case of $G=\G_m$  discussed in \cite{heuer-v_lb_rigid}, the functor from \Cref{p:fully-faithful-G-torsors-et-to-v}\eqref{p:ffGtetv-4} is in general far from being essentially surjective when $X$ is a rigid space.

\begin{proof}
  \eqref{p:ffGtetv-1}~ It is clear that both cohomological $G$-torsors on $X_{\tau}$ and geometric $G$-torsors  on $X_{\tau}$ satisfy $\tau$-descent. It therefore suffices to see  that the endomorphisms of the trivial object are identified via the functor: for the trivial cohomological $G$-torsor, it is clear that the $G$-equivariant morphisms $G\to G$ over~$X$ correspond to $g\in G(X)$ via the map $h\mapsto gh$. Similarly, for the trivial geometric $G$-torsor $X\times G$, any $G$-equivariant morphism $\phi\colon X\times G\to X\times G$ of $\tau$-sheaves over $X$ is uniquely determined by the map $X\xrightarrow{\id,1}X\times G\xrightarrow{\phi} X\times G\xrightarrow{\pi_2} G$; hence the endomorphisms of the trivial geometric $G$-torsor are also identified with $G(X)$.
  
\eqref{p:ffGtetv-2}~  It is clear from \eqref{p:ffGtetv-1} that the functor is well defined. The claim that this is fully faithful can be checked locally, so we can again reduce to the trivial torsor. Here it follows from the fact that for $\mu\colon X_v^\diamondsuit\to  X_\et^{\diamondsuit}$, we have $\mu_{\ast}G=G$ by definition.

\eqref{p:ffGtetv-3}~ This is clear from interpreting both sides as geometric $G$-torsors.

\eqref{p:ffGtetv-4}~ The condition $\nu_{\ast}\O=\O$ guarantees that for any $Y\in X_\et$, the morphisms $Y\to G$ of adic spaces over~$K$ are in bijection with the morphisms $Y^\diamondsuit\to G^\diamondsuit$ of diamonds over $K$. The statement then follows from the identification $X_\et=X_\et^\diamondsuit$.
\end{proof}

\begin{Remark}
	If $G$ is a linear algebraic group, there is a third perspective on $G$-torsors: the Tannakian point of view. We sketch it here and refer to \cite[Appendix to Lecture 19]{ScholzeBerkeleyLectureNotes} for details. A \emph{Tannakian $G$-torsor} on a sousperfectoid space $X$ is an exact tensor functor 
	\[P\colon\Rep_G\lra \mathrm{Bun}_{X,\et},\]
	 where $\Rep_G$ is the category of algebraic representations $G\to \GL(V)$ of $G$ considered as an algebraic group scheme, and $\mathrm{Bun}_{X,\et}$ is the category of \'etale vector bundles on $X$. By \cite[Theorem 19.5.1]{ScholzeBerkeleyLectureNotes}, the categories of $G$-torsors  and Tannakian $G$-torsors on $X_{\et}$ are equivalent via the functor that sends a $G$-torsor $P$ to the Tannakian $G$-torsor  $V\mapsto P\times^GV$.
	 
	 One can deduce using Kedlaya--Liu's \cref{t:loc-free-O-modules} that on a perfectoid space $X$, the categories of $G$-torsors on $X_{\et}$ and $G$-torsors on $X_{v}$ are equivalent for linear algebraic $G$.
	 
	 However, we do not know if the case of $G^+:=G(\O^+)$ for linear algebraic groups over $K^+$ could be deduced in a similar way from that of $\GL_n(\O^+)$ discussed in \cref{s:generalised-representations}:  the Tannakian approach works via algebraisation, but for $G^+$-torsors it seems less clear when these come from $G^+$-torsors on $\Spec(R^+)$, already for $G=\GL_n$.
\end{Remark}

\section{Reduction of structure group to open subgroups}\label{s:quot-sheaves}
In the past section, we have seen using the exponential that any rigid group $G$ has a neighbourhood basis of open subgroups. In this section, we study when $G$-torsors in the $v$-topology admit a reduction of structure group to open subgroups. We then deduce the main theorem, Theorem~\ref{t:G-torsors-on-perfectoid-spaces}.

\subsection{Approximation for quotient sheaves}\label{s:approx-quot-sheaves}
The key technical result of this section is a generalisation of the approximation property \cref{p:GL_n(O^+/p^n)-in-proet-site} for $\GL_n(\O^+/\varpi)$ to all rigid groups and all rigid open subgroups.  

\begin{Proposition}\label{p:approx-property-for-G/U}
	Let $G$ be a rigid group, and let $U\subseteq G$ be any rigid open subgroup $($not necessarily normal\,$)$. Then the sheaf of cosets $G/U$ on $\mathrm{Sous}_{K,\et}$ satisfies the approximation property of \cref{d:approx-property}: for any affinoid perfectoid tilde-limit $X\approx \varprojlim_{i\in I} X_i$, we have
	\[ G/U(X)=\varinjlim_{i\in I} G/U(X_i).\]
	In particular, $G/U$ is already a $v$-sheaf on $\mathrm{Sous}_{K}$. 
\end{Proposition}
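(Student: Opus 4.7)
The $v$-sheaf property is automatic from \cref{p:approx-property-implies-et-v} once the approximation property is proved, so the content is the latter. I would argue by a lift-approximate-descend strategy mirroring the proof of \cref{p:GL_n(O^+/p^n)-in-proet-site}, but replacing its Cartesian-square reduction with local ball charts on $G$ coming from smoothness and the exponential.

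Given $s\in G/U(X)$ with $X\approx\varprojlim X_i$ affinoid perfectoid, I first lift $s$ \'etale-locally: since $U\subseteq G$ is open, $G\to G/U$ is surjective on \'etale sheaves, so there is an \'etale cover $X'\to X$ and a section $g\in G(X')$ mapping to $s|_{X'}$. By \cref{l:tilde-lim-2-lim-on-etqcqs} the cover descends to some $X'_i\to X_i$, and setting $X'_j:=X_j\times_{X_i}X'_i$ gives $X'\approx\varprojlim X'_j$. The task reduces to producing, for some $j$, a section $\wt g_j\in G(X'_j)$ with $g^{-1}\cdot\wt g_j|_{X'}\in U(X')$: the cocycle condition $\wt g_{j,1}^{-1}\wt g_{j,2}\in U(X'_j\times_{X_j}X'_j)$ then holds after pullback to $X'\times_XX'$ by the subgroup property of $U$, and descends to $X'_j\times_{X_j}X'_j$ itself for possibly larger $j$ by a quasicompactness argument for the preimage of the open subspace $U\subseteq G$ combined with \cref{l:product-of-tilde-limits}.

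To produce $\wt g_j$, I use smoothness of $G$ (\cref{l:G-smooth}) together with the exponential (\cref{p:general-exponential}, \cref{c:neighbourhood-basis-good-reduction}) to fix an open subgroup $G_k=\exp(\mg_k)\subseteq U$ whose underlying rigid space is a ball $\B^d$, and to cover the quasi-compact image of $g:X'\to G$ by finitely many ball charts $\phi_\alpha\colon\B^d\to G$, possibly after a further finite \'etale cover of $X'$ to ensure the relevant chart base points are defined. Pulling back along $g$ and descending the resulting \'etale cover to some $X'_j$ via \cref{l:tilde-lim-2-lim-on-etqcqs}, I may assume that on each piece $g$ is given by $d$ bounded coordinate sections in $\O^+$. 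The key geometric input is that the rigid analytic map $\B^d\times\B^d\to G$, $(x,y)\mapsto\phi_\alpha(x)\phi_\alpha(y)^{-1}$, is identically $1$ on the diagonal, so by qcqs-ness of $\B^d\times\B^d$ and openness of $G_k\subseteq G$ there is some $n\gg 0$, uniform across the finitely many $\alpha$, such that this map sends the closed neighbourhood $\{|x-y|\leq|\varpi|^n\}$ of the diagonal into $G_k$.

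By the approximation property of $\O^+/\varpi^n$ (which follows from \cref{p:GL_n(O^+/p^n)-in-proet-site} for $\O^+/\varpi$ by iterating extensions), the coordinate sections of $g$ can be approximated modulo $\varpi^n$ by sections on $X'_j$; reassembling via $\phi_\alpha$ on each piece yields $\wt g_j\in G(X'_j)$ with $g^{-1}\cdot\wt g_j|_{X'}\in G_k(X')\subseteq U(X')$, as required. Injectivity of $\varinjlim G/U(X_i)\to G/U(X)$ follows by applying the same approximation to the $U$-valued section witnessing the equality of two competing lifts. The main technical obstacle is the bookkeeping needed to uniformly handle the finitely many chart patches $\phi_\alpha$ and to glue the chart-wise approximations into a well-defined $\wt g_j$ on $X'_j$; none of this requires new ideas beyond the geometric observation about $\phi_\alpha(x)\phi_\alpha(y)^{-1}$ and the approximation principle for $\O^+/\varpi^n$.
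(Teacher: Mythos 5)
Your plan is a genuinely different route from the paper's. The paper does not pass through an \'etale-local $G$-valued lift of $s$ at all; it works directly with the presheaf map $\varinjlim G(X_i)/U(X_i)\to G(X)/U(X)$ and proves it becomes an isomorphism after sheafification. Injectivity is handled by a spectral-space compactness argument for rational subspaces. Surjectivity is handled by picking an arbitrary affinoid $V\subseteq G$ with a ring of definition $A_0$ of topologically finite presentation over $K^+$, approximating a point $A_0\to R^+$ modulo $p^n$, and then \emph{lifting the approximate maps to genuine maps $A_0\to R_i^+$} via a nontrivial bounded-torsion lemma about the analytic cotangent complex of $A_0$ (the model $A_0$ need not be formally smooth over $K^+$, so the obstruction groups are nonzero but uniformly $p^t$-torsion). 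Finally the lifted points are shown to lie in the same $U$-coset by a pointwise estimate over perfectoid fields. Your exponential/ball-chart strategy is an attractive alternative precisely because it avoids the cotangent-complex lemma: the chart targets $\B^d$ do have formally smooth integral models, so no torsion bound is needed, and the tube-neighbourhood observation for $(x,y)\mapsto\phi_\alpha(x)^{-1}\phi_\alpha(y)$ together with the $\O^+/\varpi^n$-approximation replaces the paper's two harder inputs. That trade is real: you save the deformation-theory lemma at the cost of the patch bookkeeping.

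That said, there are two places where I think you are glossing over genuine difficulties. First, your phrase ``reassembling via $\phi_\alpha$ on each piece yields $\wt g_j\in G(X'_j)$'' is not quite right: the chart-wise approximations $\wt g_{j,\alpha}$ on overlapping pieces $X'_{j,\alpha}\cap X'_{j,\beta}$ will in general \emph{not} agree as $G$-valued sections (the approximations do not respect the transition maps of the atlas exactly), so you do not obtain a section of $G$ on $X'_j$. What you do obtain is that they agree modulo $G_k\subseteq U$, hence define a class in the \'etale sheaf $G/U$ on $X'_j$. That is enough for the argument, but then the subsequent descent from $X'_j$ to $X_j$ has to be phrased for $G/U$-classes rather than via a cocycle $\wt g_{j,1}^{-1}\wt g_{j,2}\in U$ of $G$-valued sections. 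Second, your treatment of injectivity (``applying the same approximation to the $U$-valued section witnessing the equality'') is not really an approximation step: the witness $\delta=g^{-1}g'$ is already defined on some $X'_i$, and the issue is to show that the locus where $\delta$ lands in the open $U\subseteq G$ propagates from $X'$ down to $X'_j$ for $j\gg i$. This is exactly the quasi-compactness/spectral-space argument the paper isolates (via \cite[Tag 0A2W]{StacksProject}), and you will need it verbatim. Once both of these points are made precise, I believe your proof goes through, but the phrase ``none of this requires new ideas beyond $\ldots$'' undersells the care needed to keep the $G$-valued and $G/U$-valued levels apart throughout the patching and descent.
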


\begin{proof}
	For any $i\in I$ and any $W_i\to X_{i}$ in $X_{i,\etqcqs}$, let $W:=W_i\times_{X_i}X$ and $W_j:=W_i\times_{X_i}X_j$ for $j\geq i$ be the pullbacks. Then we obtain a natural pullback map
	\[\varinjlim_{j\geq i} G(W_j)/U(W_j)\lra G(W)/U(W). \]
	Using the identification
	$ X_{\etqcqs}=2\text{-}\varinjlim_{i\in I}X_{i,\etqcqs}$ from \cref{l:tilde-lim-2-lim-on-etqcqs},  for varying $i$ and $W_i$, we can regard this as a morphism of presheaves on $X_{\etqcqs}$. We consider its sheafification: if we denote by $\mu_i\colon X_{\etqcqs}\to X_{i,\etqcqs}$ the projections, then this can be described as the natural map
	\[\phi\colon \varinjlim_{i\in I}\mu_i^{\ast}(G/U)\lra G/U.\]
	By \cite[Tag~09YN]{StacksProject}, the left-hand side satisfies 
	\[ \varinjlim_{i\in I}\mu_i^{\ast}(G/U)(X)=\varinjlim_{i\in I} G/U(X_i).\]
	Hence it suffices to prove that $\phi$ is an isomorphism.
        
	\begin{Claim}
		The map $\phi$ is injective.
	\end{Claim}

        \begin{proof}
	If $g_1,g_2\in G(X_i)$ have the same image in $G(X)/U(X)$, then $\delta=g_1^{-1}\cdot g_2\in G(X_i)$ lands in $U(X)\subseteq G(X)$. To deduce that $\delta$ lands in $U(X_j)$ for some $j\geq i$, we no longer need the group structure on $G$ and use the following argument that we learnt from \cite[Lemma 6.13(iv)]{perfectoid-spaces}. 

        \begin{Claim}
		Let $H$ be an affinoid adic space over $K$ and $V\subseteq H$ a rational subspace. Let $X_i\to H$ be a morphism such that $X\to X_i\to H$ factors through $V$. Then $X_j\to X_i\to H$ factors through $V$ for $j\gg i$.
	\end{Claim}

        \begin{proof} For $j\geq i$, let $W_j$ be the pullback of $V$ along $X_j\to H$; then $W_j\subseteq X_j$ is still rational open, hence affinoid. The assumptions imply that $X\to X_j$ factors through $W_j$ and induce a homeomorphism $|X|\to \varprojlim_{j\geq i} |W_j|$. We now use that $(W_j)_{j\geq i}$ is an inverse system of affinoid adic spaces; hence $(|W_j|)_{j\geq i}$ is an inverse system of spectral spaces with spectral transition maps. Since $\varprojlim_{j\geq i} |X_j|\backslash |W_j|$ is empty, it follows from \cite[Tag 0A2W]{StacksProject} that $X_j\backslash W_j$ is empty for some $j\geq i$. Hence $X_j=W_j$, and thus $X_j\to H$ factors through $V$. 
	\end{proof}

        We apply the claim locally on $X$: let $H$ be any affinoid open subspace of $G$, and cover $U\cap H$ by rational opens $V$. As $X_i$ is quasi-compact, finitely many such opens cover the image of $X_i\to U$. Replacing $X$ and the $X_i$ by the respective pullbacks of $V\subseteq H$, we may assume that $\delta$ lies in $V(X)$. The claim shows that we already have $\delta\in U(X_j)$ for some $j\geq i$.
	 \end{proof}

        It thus remains to prove that $\phi$ is surjective. For this we again localise on $G$: let $\Spa(A,A^+)=V\subseteq G$ be any affinoid open such that $(A,A^+)$ is of topologically finite type over $(K,K^+)$. By \cite[Lemma~3.3(ii)]{Huber-ageneralisation}, this means that there is a subring of definition $A_0\subseteq A^+$ of topologically finite type (hence automatically of topologically finite presentation) over $K^+$ such that $A^+$ is the integral closure of $A_0$ in $A$.
 	 Write $X=\Spa(R,R^+)$ and $X_i=\Spa(R_i,R^+_i)$; then any point $x\in G(X)$ that factors through $V$ defines a map $f\colon A_0\to R^+$. By \cite[Lemma~3.10]{heuer-diamantine-Picard}, the assumption $X\approx \varprojlim X_i$ implies $R^+/p^n=\varinjlim R^+_i/p^n$ for all $n\in \N$. Since $A_0/p^n$ is of finite presentation over $K^+/p$, the reduction of $f$ mod $p^n$ factors through a morphism
 	\[ A_0/p^n\lra R_i^+/p^n.\]
 	for some $i\in I$. We now use the following. 
        
 	\begin{Lemma}\label{lem44}
 		Let $A$ be a  $p$-adically complete flat $K^+$-algebra such that $A\tf$ is a smooth affinoid $K$-algebra. Let us assume for simplicity that $\Omega^1_{A[\frac{1}{p}]|K}$ is finite free. Then there is a $t\geq 0$ such that for  any $p$-adically complete $K^+$-algebra $R$ and any homomorphism 
 		of\, $K^+$-algebras
 		\[f_t\colon A/p^n\lra R/p^n\]
 		for some $n>t$, there is a homomorphism of\, $K^+$-algebras $f\colon A\to R$ with $f\equiv f_t\bmod p^{n-t}$.
 	\end{Lemma}
        
 	\begin{proof}
 		This is a statement about torsion in the cotangent complex: by \cite[Proposition~III.2.2.4]{IllusieCotangent},  for any $s\leq n\leq 2s$ in $\N$ and any morphism
 		$f_n\colon A\to R/p^n$, there is 
 		 an obstruction class 
 		\[o_{n,s}\in \Ext_{n,s}^1:=\Ext^1_{R/p^n}\left(L_{A/p^n|K^+/p^n}\otimes_{A/p^n}^{\mathbb L}R/p^n,R/p^s\right)\]
 		that vanishes if and only $f_n$ lifts to a morphism $A\to R/p^{n+s}$.
                
 		\begin{Claim}
 			There is a $t\in \N$ independent of $s,n$ such that $\Ext_{n,s}^1$ is $p^t$-torsion.
 		\end{Claim}

                \begin{proof}
 			We first note that the inclusion $A\subseteq A\tf^\circ$ has bounded $p$-torsion cokernel, and similarly for $K^+\to \O_K:=K^\circ$. Therefore, without loss of generality, we may replace $A$ by a ring of integral elements that is of topologically finite presentation over $\O_K$ and $R$ by $R\hat{\otimes}_{K^+}\O_K$, so that we  may assume that  $K^+=\O_K$.
 			In this setting, we can use the analytic cotangent complex for formal schemes and rigid spaces introduced by Gabber--Ramero \cite[Section~7]{GabberRamero}: this is a pseudo-coherent complex $L^{\an}_{ A|\O_K}$ of $A$-modules such that $H^0(L^{\an}_{ A|\O_K})=\Omega_{A|\O_K}$ and \[L^{\an}_{ A|\O_K}\otimes^{\mathbb L}_{A}A/p^k=L_{ A|\O_K}\otimes^{\mathbb L}_{A}A/p^k\]
 			for all $k\in \N$. Moreover, 
 			\[L^{\an}_{ A|\O_K}\tf=L^{\an}_{ A[\frac{1}{p}]|K}=\Omega_{A[\frac{1}{p}]|K}\]
 			due to the assumption that $A[\tfrac{1}{p}]|K$ is smooth. 
 			Let $\Omega^+$ be any finite free $A$-sublattice of $\Omega_{A[\frac{1}{p}]|K}$ that contains the image of $ \Omega_{A|\O_K}\to  \Omega_{A[\frac{1}{p}]|K}$. Then it follows that the cone $C:=\mathrm{cone}(h)$ of the canonical map $h\colon L^{\an}_{ A|\O_K}\to \Omega_{A|\O_K}\to \Omega^+$ is exact after the inversion of $p$. Thus by the same argument as in the proof of \cite[Proposition 6.10(iii)]{perfectoid-spaces}, it follows that for all $i\leq 0$, the cohomology $H^i(C)$ is killed by $p^m$ for some $m$ depending on $i$. As $C$ is bounded above, we may therefore choose $t$ such that $H^i(C)$ is killed by $p^t$ for all $i\geq -2$.
 			
 			We now first apply $\otimes^{\mathbb L}_{A}R/p^n$ and then $\Ext^1_{R/p^s}(-,R/p^s)$ to the distinguished triangle
 			\[ C\lra L^{\an}_{ A|\O_K}\lra \Omega^+\]
 			and obtain an exact sequence
 			\[ \Ext^1\left(C\otimes^{\mathbb L}_{A}R/p^n,R/p^s\right)\lra \Ext^1_{n,s}\lra \Ext^1\left(\Omega^+\otimes_{A}R/p^n,R/p^s\right).\]
 			The last term vanishes because $\Omega^+$ is a finite free $A$-module by construction.
 			The first term is $p^t$-torsion because the second argument of $\Ext^1$ is concentrated in degree $0$, so only the first two terms of $C$ contribute, and the truncation $\tau_{\geq -2}C$ is killed by $p^t$.
 		\end{proof}
                
 		Now let  $s,n$ be such that $t<s\leq n< 2s$. It is clear from short exact sequences  that
 		\[ \Ext_{n,s}^1\overset{p^t}\lra\Ext_{n-t,s}^1\]
 		sends the obstruction class $o_{n,s}$ to $o_{n-t,s}$. By the claim, this morphism is zero. It follows that we may lift the reduction $f_{n-t}\colon A\xrightarrow{f_n} R/p^n\to R/p^{n-t}$ to a morphism $A\to R/p^{n+s-t}$. As $s>t$, we see inductively that we obtain a lift of $f_{n-t}$ to the complete $\O_K$-algebra $R$.
 	\end{proof}
        
	Since after localising we may assume that $\Omega_{A|K}$ is finite free, Lemma~\ref{lem44}  applies, and setting $k=n-t$, we see that there is a sequence of indices $(i_k)_{k\in \N}$ and morphisms $f_k\colon A_0\to R^+_{i_k}$ such that $f_k\equiv f\bmod p^k$. Since the ring extension $A_0\to A^+$ is integral with bounded cokernel, it is clear that $f_k$ extends to $f_k\colon A^+\to R^+_{i_k}$, and that after re-indexing we may still assume that $f_k\equiv f\bmod p^k$. After the inversion of $p$, the $f_k$ define a sequence of points in $G(X_{i_k})$. Let $x_{k}\in G(X)$ be the images of these points under $G(X_{i_k})\to G(X)$.
        
	\begin{Claim}\label{cl:diff-in-U(X)}
		For $k\gg 0$, the difference $\delta_k:=x^{-1}x_k$ lies in $U(X)$.
	\end{Claim}

        This will prove that
	\[ \varinjlim_{i\in I} G(X_i)\lra G(X)/U(X)\]
	is surjective (generalising \cite[Lemma~3.10]{heuer-diamantine-Picard}, which is the case of $G=\O^+$, $U=\varpi\O^+$). This will in turn finish the proof of the approximation property. That $G/U$ is a $v$-sheaf then follows from \cref{p:approx-property-implies-et-v}.

        \begin{proof}
		Let $Z_k\subseteq X$ be the preimage of $U\subseteq G$ under the map
		$\delta_k\colon X\to G$.
		It is clear that $Z_k$ is open. It therefore remains to prove that each $z\in X$ is contained in $Z_k$ for some $k$. From this the claim follows by the compactness of $X$.
		
		Let $z\in X$ be any point, and let $\Spa(C,C^+)\to X$ be any morphism, where $(C,C^+)$ is a perfectoid field such that $z$ is in the image. The points $x$ and $x_k$ both define maps
		$\Spa(C,C^+)\to X\to V$
		that correspond to two homomorphisms of $K^+$-algebras
		\[ A^+\lra R^+\lra C^+.\]
		By construction, these agree modulo $p^k$. 
		We now consider the affinoid rigid space $V_C:=V\times_{\Spa(K)}\Spa(C)$ over $\Spa(C)$ and the topological space $V_C(C)$. Let $W_k\subseteq V_C(C)$ be the subspace of points whose associated morphisms $A^+\hat{\otimes}_{K^+}C^+\to C^+$ agree with $x$ mod $p^k$.
                
		\begin{Claim}
		Any open neighbourhood of $x\in V_C(C)$ contains one of the $W_k$.
		\end{Claim}

                \begin{proof}
		 We reduce to the case of $\B^d$, where the statement is clear: for this reduction, embed $V\subseteq \B^d$ into some $d$-dimensional ball corresponding to a morphism 
		 $K^+\langle T_1,\dots,T_n\rangle\to A^+$ with bounded $p$-torsion cokernel; then  the compositions 
		 \[K^+\langle T_1,\dots,T_n\rangle\lra A^+\lra A^+/p^k\lra C^+/p^k\] also agree mod $p^k$ for any two points in $W_k$. The claim follows since $V_C(C)\subseteq \B^d(C)$ carries the subspace topology.
		\end{proof}
                
		For the base-change $U_C\subseteq G_C$ of $U\subseteq G$ to $C$, this means that the open neighbourhood $(x\cdot U_C(C))\cap V_C(C)$ of $x$ contains  $W_k$ for $k\gg 0$. Thus $x_k\in W_k\subseteq  x\cdot U_C(C)$, showing $\delta_k(z)=x^{-1}x_k\in U_C(C)$, whence $z\in Z_k$.
	\end{proof}
        
This finishes the proof of \cref{p:approx-property-for-G/U}.
\end{proof}

\begin{Proposition}\label{p:reduction-of-structure-group-to-open-subgroup}
	Let $G$ be a rigid group over $K$, and let $U\subseteq G$ be a rigid open subgroup. Let $X$ be a sousperfectoid adic space over $K$, and let $\nu\colon X_v\to X_{\et}$ be the natural morphism of sites. Then the natural map
	\[ R^1\nu_{\ast}U\lra R^1\nu_{\ast}G\]
	is surjective. In other words, any $G$-torsor $E$ on $X_v$ admits a reduction of structure group to $U$ \'etale-locally on~$X$; \textit{i.e.}~there exist an \'etale cover $Y\to X$ and a $U$-torsor $F$ on $Y_v$ such that $F\times^UG=E\times_XY$ as $G$-torsors. If\, $G$ is commutative, then we more generally have 
	\[R^k\nu_{\ast}U= R^k\nu_{\ast}G\]
	and $R^k\nu_{\ast}(G/U)=1$ for all $k\geq 1$. In particular, in this case $F$ is unique up to isomorphism.
\end{Proposition}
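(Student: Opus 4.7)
The commutative case follows directly from \cref{p:approx-property-implies-et-v}. The short exact sequence of abelian $v$-sheaves $0 \to U \to G \to G/U \to 0$ yields a long exact sequence for $R^k\nu_*$, and since $G/U$ satisfies the approximation property by \cref{p:approx-property-for-G/U}, \cref{p:approx-property-implies-et-v} gives $R^k\nu_*(G/U)=0$ for all $k\geq 1$. The long exact sequence then collapses to the claimed isomorphisms $R^k\nu_*U\cong R^k\nu_*G$ for $k\geq 1$; for $k=1$ one additionally uses that $G\to G/U$ is an étale epimorphism (as it is étale), so that the connecting map $\nu_*(G/U)\to R^1\nu_*U$ vanishes.

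For the general non-commutative case, surjectivity of $R^1\nu_*U\to R^1\nu_*G$ amounts to the claim that for any $v$-$G$-torsor $E$ on a sousperfectoid $X$, the associated twisted $v$-sheaf $E/U:=E\times^G(G/U)$ admits a section étale-locally on $X$. My plan is to adapt the argument of \cref{p:approx-property-implies-et-v} to this twisted form of $G/U$. First, using the pro-étale cover $\wt X\approx\varprojlim X_i$ of \cref{l:proet-Colmez-cover}, reduce to producing a section of $E/U$ over a strictly totally disconnected $\wt X$; such a section will descend to some étale $X_i\to X$ by the approximation property of $G/U$. On $\wt X$, trivialise $E$ on an affinoid perfectoid $v$-cover $Z\to\wt X$ from \cref{l:v-cover-approximate-by-balls}, fitting into $Z\approx\varprojlim Z_j$ with each $Z_j\to\wt X$ admitting a splitting $s_j$. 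The trivialisation yields a cocycle $\sigma\in G(Z\times_{\wt X} Z)$ whose image $\bar\sigma\in (G/U)(Z\times_{\wt X} Z)$ is, by \cref{p:approx-property-for-G/U} and \cref{l:product-of-tilde-limits}, represented by some $\bar\tau_j\in (G/U)(Z_j\times_{\wt X} Z_j)$; using $s_j$, one then forms $\bar f(z):=\bar\tau_j(s_j\pi(z),z):Z\to G/U$ as a candidate descent datum.

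The hard part will be verifying the descent equation $\pi_1^*\bar f=\sigma\cdot\pi_2^*\bar f$ in $(G/U)(Z\times_{\wt X} Z)$, so as to obtain a genuine section of $E/U$ over $\wt X$. Since $G/U$ is merely a $G$-set in the non-normal case, no literal cocycle relation on $\bar\tau_j$ holds intrinsically in $(G/U)$; the needed identity must be extracted by a further application of \cref{l:product-of-tilde-limits} and approximation to the triple product $Z^{\times 3}_{/\wt X}\approx\varprojlim Z_j^{\times 3}_{/\wt X}$, so that after possibly enlarging $j$, the cocycle condition satisfied by $\sigma$ in $G(Z^{\times 3}_{/\wt X})$ transfers to the required $G$-twisted relation for $\bar\tau_j$ in $(G/U)(Z_j^{\times 3}_{/\wt X})$. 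Once the section of $E/U$ over $\wt X$ is in hand, a final appeal to the approximation property of $G/U$ descends it to some étale $X_i\to X$, completing the bootstrap from strictly totally disconnected to general sousperfectoid $X$.
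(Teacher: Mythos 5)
Your commutative argument is correct and matches the paper's: the long exact sequence for the short exact sequence of abelian sheaves $0\to U\to G\to G/U\to 0$, combined with $R^k\nu_*(G/U)=0$ for $k\geq 1$ from \cref{p:approx-property-implies-et-v} and \cref{p:approx-property-for-G/U}, yields the statement; and your observation that $\nu_*G\to\nu_*(G/U)$ is surjective (so that $R^1\nu_*U\to R^1\nu_*G$ is injective as well) is fine. Your outer reduction in the general case is also right: produce a section of $\overline E=E\times^G(G/U)$ over a strictly totally disconnected $\wt X\approx\varprojlim X_i$, and then descend to some $X_i$ by approximation for $G/U$. You are also implicitly using, without proof, that a section of $\overline E$ is the same as a reduction of structure group to $U$; this is \cref{l:section-of-fibre-bundle} in the paper, and in the non-normal case it requires a short (but not immediate) argument producing the $U$-torsor as a fibre product.

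The real gap is in your step on strictly totally disconnected $\wt X$. Your plan is to approximate the image $\bar\sigma\in(G/U)(Z\times_{\wt X}Z)$ of the transition cocycle $\sigma\in G(Z\times_{\wt X}Z)$ by some $\bar\tau_j\in(G/U)(Z_j\times_{\wt X}Z_j)$ and then build a descent datum $\bar f$ by composing with a splitting $s_j$. But the descent relation you need, $\bar f(z_1)=\sigma(z_1,z_2)\cdot\bar f(z_2)$, has a $G$-valued coefficient. To verify it from a ``cocycle condition'' on $\bar\tau_j$ at level $Z_j$ you would need a $G$-valued cocycle $\tau_j\in G(Z_j\times_{\wt X}Z_j)$ lifting $\bar\tau_j$ and pulling back to $\sigma$ — and such a thing does not exist in general, because $G$ itself does not satisfy the approximation property (already $\G_a$ fails). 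The ``$G$-twisted relation for $\bar\tau_j$ in $(G/U)(Z_j^{\times 3})$'' you invoke has no coefficient to twist by: the $G$-action lives at level $Z$, not $Z_j$, and $G/U$ is only a $G$-set, so you cannot eliminate the coefficient by taking differences in $G/U$. A further approximation of the triple product does not repair this, since the obstruction is precisely that $\sigma$ itself cannot be brought to the $Z_j$ level.

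The paper avoids this by never approximating the cocycle. It fixes the affinoid perfectoid $v$-cover $Y\to X$ and the transition cocycle $\psi\in G(Y\times_XY)$, and writes $\overline E(Z)$ for any $Z$ over $X$ as the equaliser of
\[
G/U(Y\times_XZ)\ \rightrightarrows\ G/U(Y\times_XY\times_XZ)
\]
where the two maps are $\psi\cdot\pi_1^*$ and $\pi_2^*$. Only the test object $Z$ is approximated: taking $Z=Y$ versus $Z=Y_i$ with $Y\approx\varprojlim Y_i$, the approximation property of $G/U$ applied to $Y\times_XY\approx\varprojlim(Y\times_XY_i)$ and $Y^{\times 3}_{/X}\approx\varprojlim(Y^{\times 2}_{/X}\times_XY_i)$ gives $\overline E(Y)=\varinjlim_i\overline E(Y_i)$ directly, because filtered colimits commute with equalisers. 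The tautological section of $\overline E$ over $Y$ (coming from the trivialisation) then factors through some $\overline E(Y_i)$, and composing with a splitting $Y_i\to X$ gives the desired section of $\overline E$ over $X$. You should replace your construction of $\bar f$ and the attempted verification of its descent relation by this equaliser argument; the rest of your outline then goes through.
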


\begin{proof}
	We first observe that if $U\subseteq G$ is normal, then we can consider the short exact sequence of sheaves of pointed sets
	\[ R^1\nu_{\ast}U\lra  R^1\nu_{\ast}G\lra  R^1\nu_{\ast}(G/U),\]
	and the result follows from the fact that by \cref{p:approx-property-for-G/U}, we may apply \cref{p:approx-property-implies-et-v}\eqref{p:apiev-2} to $G/U$ and conclude that  $R^1\nu_{\ast}(G/U)=1$. If $G$ is commutative, we more generally have $R^k\nu_{\ast}(G/U)=1$ by \cref{p:approx-property-implies-et-v}\eqref{p:apiev-3}, which gives the result in the commutative case.
	
	In general, $G/U$ is only a sheaf of cosets, but we can still make sense of the change-of-fibre
	\[\overline{E}:=E\times^GG/U:=G\backslash(E\times G/U), \]
	where the last term is the quotient of $E\times G/U$ with respect to the left $G$-action via $g\cdot (e,x):=(eg^{-1},gx)$. Then $\overline{E}$ is a fibre bundle on $X_v$ with structure group $G$ and fibres $G/U$. For this we can show the following.

	       \begin{Lemma}\label{l:section-of-fibre-bundle}
			Let $X$ be a locally spatial diamond, and let $E$ be a $G$-torsor on $X_v$. Then the following are equivalent: 
			\begin{enumerate}
				\item\label{i:4.9.1} $E$ admits a reduction of structure group to $U$.
				\item\label{i:4.9.3} $\overline{E}$ admits a section $s\in \overline{E}(X)$.
			\end{enumerate}
			When $U\subseteq G$ is a normal subgroup, these are furthermore equivalent to 
			
			\begin{enumerate}[resume]
			\item\label{i:4.9.2} $\overline{E}$ is isomorphic to $X\times G/U$.
			\end{enumerate}
		\end{Lemma}

	\begin{proof}
			To see the implication 	\eqref{i:4.9.1} $\Rightarrow$ \eqref{i:4.9.3}, let $F$ be a $U$-torsor such that $E=F\times^UG$. Then
			\[ \overline{E}=F\times^UG\times^GG/U=F\times^UG/U=U\backslash(F\times G/U).\]
			We claim that the composition
			\[ t\colon F\lra E\lra \overline{E}\]
			is constant in the sense that it factors through an $X$-point $s\colon X\to \overline{E}$. To see this, it suffices to see that there is a basis of $X_v$ consisting of spaces $Z$ such that the image of $t(Z)$ is a single element. But the above description of $\overline{E}$ shows that for any $Z$ on which $F\simeq U$ is trivial, $t$ becomes isomorphic to
			\[ U\lra G\lra G/U,\]
			which  has image $1$. Hence $t$ is locally constant, and therefore constant, as we wanted to see.

			 To see \eqref{i:4.9.3} $\Rightarrow$ \eqref{i:4.9.1}, let $F\subseteq E$ be the subsheaf defined as the fibre product 
			\[\begin{tikzcd}
				F \arrow[d] \arrow[r] & E \arrow[d] \\
				X \arrow[r,"s"] & \overline{E}
			\end{tikzcd}\]
			in $v$-sheaves on $X_v$. We claim that $F$	is a reduction of structure group of $E$ to $U$. To see this, we first observe that the right action of $U$ on $E$ leaves the reduction map $E\to \overline{E}$ invariant. It follows that the right $G$-action on $E$ restricts to a right $U$-action on $F$. 
			
			Now let  $f\colon Y\to X$ be a $v$-cover by a strictly totally disconnected space on which $E$ admits a trivialisation $\gamma\colon Y\times_XE=Y\times G$. This induces an isomorphism $\overline{\gamma}\colon Y\times_X\overline{E}=Y\times G/U$ that we can use to regard $f^\ast s$ as a section of $G/U(Y)$. Using that $G/U(Y)=G(Y)/U(Y)$ since $Y$ is strictly totally disconnected, we can now compose $\gamma$ with an element in $G(Y)$ to ensure that $\overline{\gamma}(s)=1$.
			
			Then via $\gamma$, the pullback of the above diagram along $Y\to X$ is isomorphic to
			\[\begin{tikzcd}
				Y\times_XF \arrow[d] \arrow[r] & Y\times G \arrow[d] \\
				Y \arrow[r,"{(\id,1)}"] &  Y\times G/U\rlap{.}
			\end{tikzcd} \]
			It follows that we have a $U$-equivariant isomorphism
			\[ Y\times_XF\longisomarrow Y\times U\,\]
			as the right-hand side is also the fibre product. Thus $F$ is a $v$-topological $U$-torsor on $Y$.
			
			Since the natural map $F\to E$ is $U$-equivariant by construction, it now follows from the fact that any morphism between $G$-torsors is an equivalence (see \Cref{p:fully-faithful-G-torsors-et-to-v}\eqref{p:ffGtetv-1}) that the map $F\times^UG\isomarrow E$ is an isomorphism; hence $F$ is a reduction of structure group of $E$.
	\end{proof}
        
		It thus suffice to prove the following. 

	        \begin{Lemma}\label{l:section-over-et-cover}
			Let $X$ be a sousperfectoid adic space, and let $E$ be a $v$-$G$-torsor on $X$. Then
			there is an \'etale cover $Y\to X$ over which $\overline{E}$ admits a section.
		\end{Lemma}

                \begin{proof}
		  Without loss of generality, we may replace $U$ by a smaller rigid open subgroup. By \cref{c:neighbourhood-basis-good-reduction}, we may therefore assume that the adic space underlying $U$ is isomorphic to the unit ball $\B^d$. In this case, we can use an argument similar to that in the proof of \cref{p:approx-property-implies-et-v}.
		  
		We first deal with the case that $X$ is strictly totally disconnected. Let $Y\to X$ be a $v$-cover by an affinoid perfectoid space on which there is an isomorphism $\alpha\colon  G\times Y\isomarrow E\times_XY$. This induces a class $\psi\in G(Y\times_XY)$ such that $\overline{E}$ can be described on any $Z\in X_v$ as the equaliser
		\[\begin{tikzcd}
			\overline{E}(Z)& G/U(Y\times_XZ)&  G/U(Y\times_XY\times_XZ).
			\arrow["\psi\cdot\pi_1^{\ast}", shift left=1, from=1-2, to=1-3]
			\arrow["\pi_2^\ast"', shift right=1, from=1-2, to=1-3]
			\arrow[from=1-1, to=1-2]
		\end{tikzcd}\]
		Approximating $Y\to X$ by sousperfectoid spaces $Y_i\hookrightarrow \B^n\times X$ as in \cref{l:v-cover-approximate-by-balls}, and using that we have $Y_{/X}^{\times 2}\approx \varprojlim_{i\in I} Y_{i/X}^{\times 2}$ by \cref{l:product-of-tilde-limits}, it follows by the approximation property for $G/U$ from \cref{p:approx-property-for-G/U} that 
		\[ \overline{E}(Y)=\varinjlim_{i\in I} \overline{E}(Y_i).\]
		This shows that the composition
		\[ Y\xrightarrow{1,\id}G/U\times Y\overset{\alpha}\lra\overline{E}\times_XY\lra \overline{E}\]
		factors through a morphism $y\colon Y_i\to \overline{E}$ for some $i$. 
		Since $Y_i\to X$ is split, we thus obtain a section $X\to \overline{E}$.
		This proves the lemma for strictly totally disconnected $X$.
		
		The general case follows from this by a similar approximation argument: let $X$ be any affinoid sousperfectoid space, and let $Y\to X$ be the quasi-pro-\'etale cover $Y\to X$ by a strictly totally disconnected space from \cref{l:proet-Colmez-cover}; \textit{i.e.}~$Y\approx \varprojlim Y_i$ is a cofiltered tilde-limit of \'etale surjective maps $Y_i\to Y$. By the first part, there is an isomorphism $\alpha\colon G/U\times Y\isomarrow \overline{E}\times_XY$. Again by the approximation property, we have
		$\overline{E}(Y)=\varinjlim \overline{E}(Y_i)$.
	        \end{proof}
                
	This finishes the proof of \cref{p:reduction-of-structure-group-to-open-subgroup}
\end{proof}

As a further application, the strategy of this section also gives  the following useful results. 

\begin{Proposition}\label{p:reduction-of-strgrp-in-tilde-limits}
	Let $G$ be a rigid group, and let $U\subseteq G$ be any open subgroup.
	Let $X$ be a locally spatial diamond, and let $E$ be a $G$-torsor on $X_v$. Let $Y=\varprojlim_{i\in I} Y_i$ be a limit of locally spatial diamonds with qcqs transition maps and compatible maps $Y_i\to X$. If\, $E\times_XY$ has a reduction of structure group to $U$, then there is an $i\in I$ such that $E\times_XY_i$ has a reduction of structure group to $U$.
\end{Proposition}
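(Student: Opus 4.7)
The plan is to translate the question via \cref{l:section-of-fibre-bundle} into one about sections of the coset bundle $\overline{E} := E \times^G G/U$, and then apply the approximation property of $G/U$ established in \cref{p:approx-property-for-G/U}. The hypothesis provides a section $s \in \overline{E}(Y)$, and it suffices to find $i \in I$ and a section $s_i \in \overline{E}(Y_i)$, as this will by the same lemma yield a reduction of structure group of $E \times_X Y_i$ to $U$.

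Since the conclusion is local on $X$ and on $Y$, we may assume $X$ is affinoid. Choose an affinoid perfectoid $v$-cover $\pi : Z \to X$ on which $E$ trivialises, so that $\overline{E} \times_X Z \cong (G/U) \times Z$. Set $W := Z \times_X Y$ and $W_j := Z \times_X Y_j$; then arguing as in \cref{l:product-of-tilde-limits} we obtain an affinoid perfectoid tilde-limit $W \approx \varprojlim W_j$ of sousperfectoid spaces. Under the trivialisation, $s|_W$ corresponds to a morphism $f : W \to G/U$, and by the approximation property (\cref{p:approx-property-for-G/U}) this factors through some $f_j : W_j \to G/U$, defining a section $t_j \in \overline{E}(W_j)$ whose pullback to $W$ agrees with $s|_W$.

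It remains to descend $t_j$ to a section over $Y_j$ along the $v$-cover $W_j \to Y_j$. This requires the cocycle condition $\pi_1^\ast t_j = \pi_2^\ast t_j$ on $W_j \times_{Y_j} W_j = Z \times_X Z \times_X Y_j$. Under the trivialisations, both pullbacks are morphisms into $G/U$, and by construction they agree after further pullback to $W \times_Y W = Z \times_X Z \times_X Y$. Applying \cref{l:product-of-tilde-limits} again, this is an affinoid perfectoid tilde-limit $W \times_Y W \approx \varprojlim W_k \times_{Y_k} W_k$. The injectivity part of \cref{p:approx-property-for-G/U} — that is, the fact that $\varinjlim (G/U)(W_k \times_{Y_k} W_k) \to (G/U)(W \times_Y W)$ is injective — then yields an index $k \geq j$ at which $\pi_1^\ast t_k = \pi_2^\ast t_k$ already. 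By $v$-descent for $\overline{E}$, this produces the required section $s_k \in \overline{E}(Y_k)$, finishing the proof via \cref{l:section-of-fibre-bundle}.

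The main technical obstacle will be to verify that the relevant base changes $W$ and $W \times_Y W$ preserve the affinoid perfectoid tilde-limit structure, which requires combining \cref{l:product-of-tilde-limits} (suitably adapted) with the stability of sousperfectoidness under pullback along affinoid perfectoid morphisms. Once this bookkeeping is in place, the substantive content reduces cleanly to two applications of the approximation property for $G/U$, mirroring the proof of \cref{p:reduction-of-structure-group-to-open-subgroup}.
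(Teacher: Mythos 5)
Your proof is correct and rests on the same two pillars as the paper's: translate the existence of a reduction of structure group into the existence of a section of $\overline{E}=E\times^G G/U$ via \cref{l:section-of-fibre-bundle}, and then apply the approximation property of $G/U$ (\cref{p:approx-property-for-G/U}) along the tilde-limit. The divergence is in the choice of cover. The paper invokes \cref{l:section-over-et-cover} to produce a \emph{standard \'etale} cover $Z\to X$ trivialising $\overline{E}$; then for each $i$ the \'etale sheaf condition gives the equaliser sequence $\overline{E}(Y_i)\to G/U(Z\times_XY_i)\rightrightarrows G/U(Z\times_XZ\times_XY_i)$ for free, and one concludes by passing to the filtered colimit (which commutes with finite limits) after applying the approximation property to $Z\times_X Y$ and $Z\times_XZ\times_XY$. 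You instead take an affinoid perfectoid $v$-cover $Z\to X$ trivialising $E$ itself — which is cheap, since $E$ is a $v$-torsor — but then you must re-derive the descent datum by hand: find $t_j$ over $W_j$ by approximation, and then detect the cocycle condition at a finite level using the injectivity half of the approximation property on the double product. Your route has the mild advantage of not invoking \cref{l:section-over-et-cover} (a nontrivial input to \cref{p:reduction-of-structure-group-to-open-subgroup}), while the paper's route is more economical because the Čech equaliser is automatic for an \'etale cover and taking colimits absorbs both steps at once. The one place you should be more careful is the bookkeeping you flag at the end: with an \'etale $Z\to X$, the space $Z\times_X Y$ is \'etale over the perfectoid $Y$ and hence affinoid perfectoid, and the tilde-limit $Z\times_X Y\approx\varprojlim Z\times_X Y_i$ is exactly the cited {\cite[Lemma~3.13]{heuer-diamantine-Picard}}; with a $v$-cover you need the (true, but separate) facts that the fibre product of two affinoid perfectoid spaces over a sousperfectoid base is affinoid perfectoid and that a version of {\cite[Lemma~3.13]{heuer-diamantine-Picard}} holds for base change along such a $v$-cover — these can be established by the simple-tensor approximation argument as in \cref{l:product-of-tilde-limits}, but they are not literally what \cref{l:product-of-tilde-limits} states. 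Once that bookkeeping is supplied, your argument goes through.
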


\begin{proof}
	By \Cref{l:section-of-fibre-bundle}, it suffices to prove that we have
	\[ \overline{E}(Y)=\varinjlim_i \overline E(Y_i).\]
	To see this, let $\wt X\to X$ be any $v$-cover by a perfectoid space on which $E$ becomes trivial, and consider the fibre products $\wt Y:=Y\times_X\wt X$ and $\wt Y_i:=Y_i\times_X\wt X$ in the category of locally spatial diamonds (these exist by \cite[Corollary 11.29]{etale-cohomology-of-diamonds}). Then since fibre products commute with limits, we have $\wt Y=\varprojlim \wt Y_i$. By \Cref{p:approx-property-for-G/U} and  \Cref{l:approx-prop-for-fibre-products},   it follows that we have
	\[ G/U\left(\wt Y\right)=\varinjlim_i G/U\left(\wt Y_i\right),\]
	and similarly for $\wt Y\times_Y\wt Y$.
	Since $\overline{E}\simeq G/U$ over $\wt X$, and hence also over each $\wt Y_i$ and $\wt Y$,  it follows from this that
	\[ \overline{E}(Y)=\cH^0\left(\wt Y\to Y,\overline{E}\right)=\varinjlim_i \cH^0\left(\wt Y_i\to Y_i,\overline{E}\right)=\varinjlim_i\overline{E}(Y_i),\]
	as we wanted to see.
\end{proof}

\begin{Corollary}
	Let $X$ be any locally spatial diamond. Let $S$ be any adic space over $K$. Let $E$ be a $G$-torsor on $(X\times S)_v$. Let $\eta\colon \Spa(C,C^+)\to S$ be any morphism where $C$ is a perfectoid field, and assume that the pullback of\, $E$ to $X\times \eta$ admits a reduction of structure group to $U$. Then there is an \'etale map $S'\to S$ whose open image contains $\im(\eta)$ such that $E$ admits a reduction of structure group to $U$ over $X\times S'$.
\end{Corollary}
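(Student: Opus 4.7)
The plan is to apply \cref{p:reduction-of-strgrp-in-tilde-limits} to a tilde-limit presentation of $\eta$ provided by \cref{l:v-cover-approximate-by-balls}, and then to descend the resulting smooth-over-$S$ neighborhood $Y_i$ to an \'etale $S'\to S$ by constructing a section of $Y_i\to S$ passing through $\eta$.

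First, by \cref{l:v-cover-approximate-by-balls} applied to $\eta:\Spa(C,C^+)\to S$, we obtain an affinoid perfectoid tilde-limit $\Spa(C,C^+)\approx \varprojlim_{i\in I} Y_i$ with each $Y_i\subseteq \B^d\times S$ a rational open subspace; each induced map $Y_i\to S$ is then smooth, being the composition of an open immersion with the smooth projection $\B^d\times S\to S$. Since $X$ is sousperfectoid, the tilde-limit persists after taking products with $X$, yielding
\[ X\times \Spa(C,C^+)\approx \varprojlim_{i\in I} X\times Y_i,\]
an affinoid perfectoid tilde-limit of sousperfectoid spaces with compatible maps to $X\times S$. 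The hypothesis that the pullback of $E$ to $X\times\Spa(C,C^+)$ admits a reduction of structure group to $U$, combined with \cref{p:reduction-of-strgrp-in-tilde-limits}, then produces an index $i\in I$ such that the pullback of $E$ to $X\times Y_i$ admits a reduction of structure group to $U$.

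It remains to produce an \'etale map $S'\to S$ whose open image contains $\im(\eta)$ together with a section $s:S'\to Y_i$ of $Y_i\to S$: pulling back the reduction along $\id_X\times s:X\times S'\to X\times Y_i$ then gives the sought reduction over $X\times S'$. Writing the $C$-point $\eta$ as $(x,\eta')\in\B^d(C)\times S(C)$, the section is constructed by approximation: after possibly replacing $S$ by the finite \'etale cover $S_L:=S\times_{\Spa(K)}\Spa(L)$ for a suitable finite extension $L/K$ inside $C$ (which remains perfectoid), one approximates each coordinate $x_j\in \O_C$ by some $f_j\in \O^+(S_L)$ with $|f_j(\eta')-x_j|$ small. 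Openness of $Y_i$ in $\B^d\times S$ then ensures that the graph of the section $t\mapsto (f_1(t),\dots,f_d(t),t)$ of $\B^d\times S_L\to S_L$ lies in $Y_i$ over an open neighborhood of $\eta'$, which we take as $S'$.

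The hard part here is the approximation step, which is a perfectoid analogue of the \'etale-local structure of smooth morphisms: it requires that a $C$-valued point of $\B^d$ can be approximated sufficiently well by bounded global functions after a finite \'etale base change along $K$. This should follow from the density of classical points of $\B^d$ after passing to a suitable algebraic extension of $K$ together with the openness of $Y_i$.
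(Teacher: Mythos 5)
Your first two steps (approximating $\eta$ by relative balls $Y_i\subseteq\B^d\times S$ via \cref{l:v-cover-approximate-by-balls}, taking products with $X$, and invoking \cref{p:reduction-of-strgrp-in-tilde-limits} to get the reduction over some $X\times Y_i$) match the opening of the paper's argument. The gap is exactly at the step you flag as the hard part: the claim that the coordinates $x_j\in\O_C$ of the $\B^d$-component of $\eta$ can be approximated by functions $f_j\in\O^+(S_L)$, for $L/K$ finite, with $|f_j(\eta')-x_j|$ small, is false in general. The values $f_j(\eta')$ lie in (the ring of integers of) the compositum of the completed residue field of $S$ at $\eta'$ with a finite extension of $K$, whereas $C$ is an arbitrary perfectoid field receiving a map from that residue field and may contain elements at distance bounded below from every element of that compositum. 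For instance, take $S=\Spa(K',\O_{K'})$ a perfectoid field and $C$ a perfectoid field containing $K'(t)$ equipped with the Gauss norm: then $|t-a|\geq 1$ for every $a$ algebraic over $K'$ with $|a|\leq 1$, so no choice of $L$ and $f_j$ brings the graph point close to $(x,\eta')$, and openness of $Y_i$ gives you nothing. Density of classical points of $\B^d$ over extensions of $K$ is not the relevant statement; the relevant base field is the residue field at $\eta'$, not $K$, and the relevant target is a point of the fibre of $Y_i$ over $\eta'$, not the given $C$-point.

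There are two ways to close the gap. One is to abandon the attempt to make the section pass near $(x,\eta')$ and instead aim it at a classical point of the non-empty open fibre of $Y_i$ over $\eta'$ (which exists since this fibre is a non-empty topologically-finite-type affinoid over the completed residue field), then spread out using henselianity of the local ring of $S$ at $\eta'$; this works but is a genuinely different argument from the one you sketch. The other is the paper's route, which avoids constructing sections over a general perfectoid $S$ altogether: it first treats the case where $S$ is strictly totally disconnected, where the smooth maps $S_i\to S$ produced by \cref{l:v-cover-approximate-by-balls} automatically split over their open images, and then deduces the general case by factoring $\eta$ through the pro-\'etale strictly totally disconnected cover $\widetilde S\approx\varprojlim S_i\to S$ of \cref{l:proet-Colmez-cover} and applying \cref{p:reduction-of-strgrp-in-tilde-limits} a second time to descend the resulting open $W\subseteq\widetilde S$ to an open of some \'etale $S_i\to S$. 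As written, your proof is incomplete at this step.
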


\begin{proof}
	We first assume that $S$ is strictly totally disconnected; then by \Cref{l:v-cover-approximate-by-balls}, we can choose an approximation of $\eta$ of the form $\Spa(C,C^+)\approx \varprojlim S_i\to S$, where each $S_i\to S$ is split. Then $X\times \eta= \varprojlim X\times S_i$ in locally spatial diamonds; hence \Cref{p:reduction-of-strgrp-in-tilde-limits} shows that $V$ admits a reduction of structure group to $U$ on some $X\times S_i$. Since $S_i\to S$ is smooth, its image is an open in $S$, which is again strictly totally disconnected. Hence the map is split over its open image, so we obtain an open of $S$ with the desired property.
	
	We deduce the general case by considering the pro-\'etale strictly totally disconnected cover $\widetilde{S}\approx\varprojlim_i S_i\to S$ of \Cref{l:proet-Colmez-cover}: the map $\eta$ then factors through a map $\widetilde{\eta}\colon \Spa(C,C^+)\to \widetilde{S}$. By the first part, there is an open $W\subseteq \widetilde{S}$ such that $E$ admits a reduction of structure group to $U$ over $X\times W$. After shrinking $W$, we may, without loss of generality, assume that it comes via pullback from some open $W_i\subseteq S_i$. Applying \Cref{p:reduction-of-strgrp-in-tilde-limits} once more, this shows that $E$ becomes trivial over $X\times W_i$ for some open $W_i\subseteq S_i$.
\end{proof}

\subsection{$\boldsymbol{p}$-adic integral subgroups for rigid groups of good reduction }\label{s:integral-G}
In this subsection, we take a closer look at the structure of rigid open subgroups in the special case that~$G$ has good reduction.
Throughout this subsection, we fix a rigid group $G$ of good reduction as well as a smooth formal model $\mathcal G$ of $G$ over $\Spf(K^+)$. The Lie algebra $\mathfrak g^+$ of $\mathcal G$ is a finite projective $K^+$-module, hence free, and we can consider it as a canonical open $K^+$-lattice $\mathfrak g^+\subseteq \mathfrak g$. As before, we also consider these as $v$-sheaves on $K_v$ and write explicitly  $\mathfrak g^+(K)$ and $\mathfrak g(K)$ for the underlying  modules. 

\begin{Lemma}
	The sheaf\, $G$ on $\Sous_{K,\et}$ is the analytic sheafification of the functor 
	\[ (R,R^+)\longmapsto \mG(R^+)\]
\end{Lemma}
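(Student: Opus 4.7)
The plan is to exhibit a natural morphism of presheaves $F \to G$ on $\Perf_K$, where $F$ is the presheaf $(R,R^+) \mapsto \mG(R^+) = \Hom^{\cts}_{\Spf(K^+)}(\Spf(R^+),\mG)$, and then to check that this morphism is a local isomorphism in the analytic topology. Since $G$ is an adic space and in particular an analytic sheaf on $\Perf_K$, this identifies $G$ with the analytic sheafification of $F$. The natural map itself is provided by the functoriality of the adic generic fibre: any $f \colon \Spf(R^+) \to \mG$ induces $f_\eta^{\ad} \colon \Spa(R,R^+) \to \mG_\eta^{\ad} = G$.

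First I would handle the case where $\mG = \Spf(A^+)$ is affine, so that $G = \Spa(A^+\tf, (A^+)^\sim)$ by the construction of the adic generic fibre recalled in \cite[\S2.2]{ScholzeWeinstein}. In this case $F(R,R^+)$ is the set of continuous $K^+$-algebra homomorphisms $A^+ \to R^+$, while $G(R,R^+)$ is the set of continuous $K$-algebra maps $A^+\tf \to R$ sending $A^+$ into $R^+$; these two descriptions are exchanged by restriction along $A^+ \subseteq A^+\tf$ and by inverting $p$, producing the desired bijection.

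For the general case, I would choose an affine formal open cover $\mG = \bigcup_i \mathcal{U}_i$ with $\mathcal{U}_i = \Spf(A_i^+)$. Passing to adic generic fibres yields an analytic open cover $G = \bigcup_i V_i$ with $V_i = (\mathcal{U}_i)_\eta^{\ad}$. Given any $\phi \in G(R,R^+)$, the preimages $\phi^{-1}(V_i)$ form an open cover of the affinoid perfectoid space $\Spa(R,R^+)$, which we may refine to a cover by rational affinoid perfectoid opens $\Spa(S,S^+)$ each of which factors through a fixed $V_i$. By the affine case applied to $\mathcal{U}_i$, the restriction of $\phi$ to such $\Spa(S,S^+)$ lifts uniquely to an element of $\mathcal{U}_i(S^+) \subseteq F(S,S^+)$. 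These local lifts, together with uniqueness on overlaps (again by the affine case), show that $F \to G$ becomes an isomorphism after analytic sheafification. No serious obstacle is expected: the entire content is concentrated in the affine case, and the globalisation is routine given the compatibility of the adic generic fibre functor $(-)_\eta^{\ad}$ with open immersions of formal schemes.
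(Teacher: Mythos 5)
Your proof takes essentially the same route as the paper's, which simply records the affine case: for a formal affine open $\Spf A \subseteq \mG$ the adic generic fibre is $\Spa(A\tf, A^+)$ with $A^+$ the integral closure of $A$ in $A\tf$, so that $(R,R^+)$-points of this open correspond exactly to $K^+$-algebra homomorphisms $A \to R^+$. The paper leaves the globalisation implicit; you spell it out, which is reasonable. One small caveat concerns your phrase ``uniqueness on overlaps (again by the affine case)'': on an overlap the two local lifts a priori factor through two \emph{different} formal affine opens of $\mG$, and the fact that a formal-scheme map $\Spf S^+ \to \mG$ lands in a formal affine $\mathcal{U}_i$ as soon as its adic generic fibre lands in $V_i := (\mathcal{U}_i)^{\ad}_\eta$ does not follow purely from compatibility with open immersions --- it amounts to surjectivity of the specialisation map $|\Spa(S,S^+)|\to|\Spf S^+|$, which does hold here since $S^+$ is bounded for $(S,S^+)$ perfectoid. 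A cleaner way to settle the uniqueness step is to note that $F\to G$ is already injective \emph{on sections}, before sheafifying: $\mG$ is separated over $\Spf(K^+)$, so the equaliser of two maps $\Spf R^+ \to \mG$ is a closed formal subscheme $\Spf(R^+/I)$, and its adic generic fibre being all of $\Spa(R,R^+)$ forces $I\tf=0$, hence $I=0$ since $R^+$ is $p$-torsion-free. With that, your local lifts automatically agree on overlaps and the globalisation goes through as stated.
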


\begin{proof}
	For any formal affine open $\Spf A\subseteq \mG$, the adic generic fibre is by definition given by $(\Spf A)^{\ad}_{\eta}=\Spa(A\tf,A^+)$, where $A^+$ is the integral closure of $A$ in $A\tf$. Hence the points $G(R,R^+)$ correspond to the homomorphisms of $K^+$-algebras $A\to R^+$.
\end{proof}

We can use this to give a natural generalisation of the sheaves $\GL_n(\O^+/p^k\m)$. 

\begin{Definition}
	For any $0\leq k\in \log|K|$, consider the  sheaf $\overline G_{k}$ on the big \'etale site of sousperfectoid spaces $\mathrm{Sous}_{K,\et}$ defined by \'etale sheafification of the presheaf on $\Sous_{K,\et}$
		\[ Y\longmapsto \mG\left(\O^+/p^k\m\O^+(Y)\right).\]
	We denote by $G_{k}\subseteq G$ the kernel of the morphism $G\to \overline G_{k}$.
\end{Definition}

For $k=0$, the restriction to $\Perf_K$  of $\overline G_{k}$ is the sheaf $\overline G^\diamondsuit$ studied in \cite[Section~5]{heuer-Picard-good-reduction}. 

\begin{Lemma}\label{l:ses-of-G_k}
	The following sequence on $\Sous_{K,\et}$ is short exact:
	\[ 0\lra G_k\lra G\lra \overline G_{k}\lra 0.\]
\end{Lemma}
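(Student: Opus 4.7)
Since $G_k$ is defined as the kernel of $G\to\overline G_k$, the injectivity of $G_k\hookrightarrow G$ is tautological, and the content of the lemma is the surjectivity of $G\to\overline G_k$ as a map of \'etale sheaves on $\Sous_{K,\et}$. The plan is to reduce this to a standard $p$-adic lifting argument using the formal smoothness of $\mG$.

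Concretely, I would take $Y=\Spa(R,R^+)$ an affinoid sousperfectoid space and $s\in\overline G_k(Y)$. Since $\overline G_k$ is by definition the \'etale sheafification of the presheaf $Y\mapsto\mG(R^+/p^k\mathfrak m)$, after refining $Y$ by an \'etale cover I may assume that $s$ comes from the presheaf, i.e., is represented by a morphism of $K^+$-formal schemes
\[s\colon\Spec(R^+/p^k\mathfrak m)\to\mG,\]
where $R^+/p^k\mathfrak m$ carries the discrete topology, so $\Spf=\Spec$. This reduces the problem to lifting such an $s$ to a morphism $\Spf R^+\to\mG$ of formal $K^+$-schemes.

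The next step is to lift $s$ inductively along the tower
\[\cdots\twoheadrightarrow R^+/p^{N+1}\twoheadrightarrow R^+/p^N\twoheadrightarrow\cdots\twoheadrightarrow R^+/p^k\mathfrak m\]
to morphisms $s_N\colon\Spec(R^+/p^N)\to\mG$. Each of these surjections has a nilpotent kernel: the kernel $(p^k\mathfrak m)R^+/p^N R^+$ of the initial step satisfies $(p^k\mathfrak m)^m\subseteq p^{km}R^+$, which vanishes in $R^+/p^N$ for $m\gg 0$, while the subsequent kernels $p^N R^+/p^{N+1}R^+$ are square-zero for $N\geq 1$. Since $\mG\to\Spf K^+$ is formally smooth and each source $\Spec(R^+/p^N)$ is affine, the obstruction to globally gluing local lifts (lying in $H^1$ of a coherent sheaf on an affine scheme) vanishes, so we obtain the $s_N$ compatibly. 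Finally, using that $R^+$ is $p$-adically complete, so that $\Spf R^+=\varinjlim_N\Spec(R^+/p^N)$ as formal schemes, the compatible system $(s_N)_{N\geq k}$ assembles into a morphism $\tilde s\colon\Spf R^+\to\mG$. By the lemma preceding the statement, this defines a section $\tilde s\in G(Y)$, and by construction it reduces to $s$ under $G\to\overline G_k$.

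The main technical point is the inductive lifting when $\mG$ is non-affine, which is handled by the standard obstruction-theoretic argument sketched above; affineness of the source $\Spec(R^+/p^N)$ is what makes it work. All other ingredients, such as the compatibility between the formal-scheme picture and the $v$-sheaf $G$, are provided by the lemma just proved.
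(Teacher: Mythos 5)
Your argument is sound and essentially identical to the paper's for $k>0$: both reduce to the surjectivity of $\mG(R^+)\to\mG(R^+/p^k\m)$, which follows from formal smoothness of $\mG$; you simply unpack the standard obstruction-theoretic lifting that the paper asserts in one line. However, you overlook the case $k=0$, which the definition allows and the paper treats separately. The nilpotence claim you rely on reads ``$(p^k\m)^m\subseteq p^{km}R^+$ vanishes in $R^+/p^N$ for $m\gg 0$'', but for $k=0$ this gives $\m^m\subseteq R^+$, which never vanishes. And indeed the kernel $\m R^+/p^N R^+$ is genuinely not nilpotent: since $K$ is perfectoid its value group is dense, so $\m=\m^2=\m^m$ for all $m$, and hence no power of $\m R^+$ lands in $p^N R^+$. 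So the square-zero factorisation that underlies your obstruction argument is unavailable in the first step. The paper closes this gap with a separate preliminary lift: given $x\in\mG(S^+/\m)$, one uses that $\mG$ is locally of topologically finite presentation (and passes to an analytic cover so that $x$ lands in an affine formal chart $\Spf A$ with $A$ of t.f.p.) to lift $x$ to $\mG(S^+/p^\epsilon)$ for some $\epsilon>0$; after that, the kernel $p^\epsilon\m$ is nilpotent mod every $p^N$ and your argument for $k>0$ takes over. You should add this reduction to recover the $k=0$ case.
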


\begin{proof}
	The left exactness is clear by definition. The sequence is right exact because $\mG$ is formally smooth over $K^+$, so for any $k>0$ and any affinoid $(S,S^+)$ in $\Sous_{K,\et}$, the map $\mG(S^+ )\to \mG(S^+/p^k\m)$ is surjective. For $k=0$, we observe that after the passage to an analytic cover, any $x\in \mG(S^+/\m)$ lifts to $\mG(S^+/p^{\epsilon})$ for some $\epsilon>0$ as $\mG$ is locally of topologically finite presentation.
\end{proof}

We can describe $G_k$ in more classical terms as follows: let $\wh{\mathcal G}$ be the completion of $\mathcal G$ at the origin. This is a formal Lie group: any local choice of generators of the sheaf of ideals defining the unit section in $G$ induces an isomorphism of formal schemes
\[\wh{\mathcal G}\cong \Spf(K^+[[T_1,\dots,T_d]]);\]
see \cite[Chapter II,  pp.~25--26]{MessingCrystals}. 
In particular, on global sections, $\wh{\mathcal G}$ defines a formal group law over $K^+$.

\begin{Lemma}\label{l:explicit-description-of-G_k}\leavevmode
	\begin{enumerate}
		\item $G_0$ is represented by the adic generic fibre  $\wh{\mG}^{\ad}_{\eta}$ of\, $\wh{\mG}$.
		\item\label{l:edGk-2} The natural morphism $G_k\to G_0$ is isomorphic to the adic generic fibre of the morphism of affine formal schemes defined on global sections by
		\[ K^+[[T_1,\dots,T_d]]\lra K^+[[T'_1,\dots,T'_d]],\quad T_i\longmapsto p^kT'_i.\]
	\end{enumerate}
	In particular, $G_k\subseteq G$ is represented by a normal rigid open subgroup whose underlying rigid space is isomorphic to an open ball.
\end{Lemma}

\begin{proof}
	Let $\Spf(A)\subseteq \mG$ be any open neighbourhood of the unit section, and let $I\subseteq A$ be the ideal of the unit section. Then the functor $\wh{\mG}^{\ad}_{\eta}$ is the sheafification of the functor that sends  $(R,R^+)$ to the continuous homomorphisms $\varphi\colon A\to \varprojlim A/I^n=K^+[[T_1,\dots,T_d]]\to R^+$. By continuity, the images of the $T_i$ land in the topologically nilpotent elements $\m R^+$. Thus $\varphi$ reduces mod $\m$ to the unit section $A\to K^+/\m \to R^+/\m$. Conversely, if $A\to R^+$ factors mod $\m$ through the unit section, it already does so mod $p^{\epsilon}$ for some $\epsilon>0$ since $A$ is of topologically finite presentation. Thus for all $n\in \N$, the map $A\to R^+\to R^+/p^n$ factors through $A/I^m$ for some $m$, and in the limit we get the desired map.
	
	For part~\eqref{l:edGk-2}, we similarly observe that if a map $A\to R^+$ factors through $K^+[[T'_1,\dots,T'_d]]$, then it factors through the unit section after reduction mod $p^k\m$. Conversely, if it factors through the unit section mod $p^k\m$, then the induced morphism $K^+[[T_1,\dots,T_d]]\to R^+$ from the first part must send $T_i$ into $p^k\m R^+$ and thus factors through the displayed map.
	
	That $G_k$ is normal can be proved on the level of sheaves, where it follows from the short exact sequence in \cref{l:ses-of-G_k}.
\end{proof}

Combined with our work in the previous sections, this shows the following. 

\begin{Proposition}\label{l:cohomo-comp-G^+/G^+_c}
	The sheaf\, $\overline G_{k}$ on $\Sous_{K,\et}$ is already a $v$-sheaf. Moreover,
	\[ R^1\nu_{\ast}\overline G_k=1.\]
	If\, $G$ is commutative,  we more generally have $R\nu_{\ast}\overline G_{k}=\overline G_{k}$.
\end{Proposition}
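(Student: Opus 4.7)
The plan is to reduce this to the machinery already developed in the preceding two sections by identifying $\overline{G}_k$ with the quotient sheaf $G/G_k$, and then invoking \cref{p:approx-property-for-G/U} together with \cref{p:approx-property-implies-et-v}.

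First, I would observe that by \cref{l:ses-of-G_k}, the sequence $0\to G_k\to G\to \overline{G}_k\to 0$ is exact on $\Sous_{K,\et}$, which (using normality of $G_k\subseteq G$ from \cref{l:explicit-description-of-G_k}) gives a canonical isomorphism of étale sheaves of groups
\[ \overline{G}_k \;\cong\; G/G_k. \]
By \cref{l:explicit-description-of-G_k}, $G_k\subseteq G$ is a rigid open subgroup (its underlying rigid space is an open ball), so the hypotheses of \cref{p:approx-property-for-G/U} are met for the pair $(G,G_k)$.

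Second, \cref{p:approx-property-for-G/U} then asserts that $G/G_k$ satisfies the approximation property of \cref{d:approx-property}, and in particular is already a $v$-sheaf. Transporting this along the isomorphism above, $\overline{G}_k$ satisfies the approximation property on $\Sous_{K,\et}$ and is a $v$-sheaf. This gives the first assertion.

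Third, I would feed $\overline{G}_k$ into \cref{p:approx-property-implies-et-v}: part (2) of that proposition immediately yields $R^1\nu_{\ast}\overline{G}_k=1$ for any $X\in \Sous_K$. When $G$ is commutative, $\overline{G}_k$ is moreover a sheaf of abelian groups (since it is a quotient of $G$ by a normal subgroup), so part (3) applies and gives $R^m\nu_{\ast}\overline{G}_k=0$ for all $m\geq 1$. Combined with $\overline{G}_k=\nu_{\ast}\nu^{\ast}\overline{G}_k$ (the $v$-sheaf property, i.e.\ part (1) of \cref{p:approx-property-implies-et-v}), this gives $R\nu_{\ast}\overline{G}_k=\overline{G}_k$.

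There is essentially no obstacle: every nontrivial input has been prepared in advance. The only point requiring a little care is the identification $\overline{G}_k\cong G/G_k$ as sheaves of (possibly non-abelian) groups, which relies on normality of $G_k$; this is exactly what is recorded in \cref{l:explicit-description-of-G_k}, so the proof is a direct citation chain.
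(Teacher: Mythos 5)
Your proposal is correct and follows exactly the same route as the paper: identify $\overline{G}_k\cong G/G_k$ via \cref{l:ses-of-G_k} and the normality recorded in \cref{l:explicit-description-of-G_k}, then invoke \cref{p:approx-property-for-G/U} to obtain the approximation property and $v$-sheaf property, and finally apply \cref{p:approx-property-implies-et-v} for the vanishing of $R^1\nu_{\ast}$ (and of higher $R^m\nu_{\ast}$ in the abelian case). The paper compresses the last step into ``with the desired properties''; you have simply made that citation chain explicit.
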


\begin{proof}
	By Lemmas~\ref{l:ses-of-G_k} and~\ref{l:explicit-description-of-G_k}, we know that $\overline G_k=G/G_k$ is the quotient of the rigid group $G$ by the normal open subgroup $G_k$. By \cref{p:approx-property-for-G/U}, it follows that $\overline G_k$ is a $v$-sheaf with the desired properties.
\end{proof}

This allows us to pass from adic spaces to diamonds in the following: the rigid groups $G$ and $G_k$ represent sheaves on the site $\Dmd_{K,v}$ of diamonds over $K$ with the $v$-topology.
Due to \Cref{l:cohomo-comp-G^+/G^+_c} and \Cref{l:ses-of-G_k}, we may also extend $\overline{G}_k$ to a $v$-sheaf on the site $\Dmd_{K,v}$ with the $v$-topology by setting $\overline{G}_k:=G/G_k$.

The system $(G_k)_{k\in \N}$ forms a neighbourhood basis of  open subgroups and gives us a notion of completeness of $G$, generalising the isomorphism $\GL_n(\O^+)=\varprojlim_k \GL_n(\O^+/p^k\m)$. 

\begin{Lemma}\label{l:G^+-complete}
	The natural map $ G\to\varprojlim_{k\in \N} \overline{G}_{k}$
	is an isomorphism of $v$-sheaves.
\end{Lemma}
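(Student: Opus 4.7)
The plan is to verify the isomorphism $v$-locally. Both sides are $v$-sheaves: the left because $G$ is a rigid group, and the right as an inverse limit of $v$-sheaves, using \cref{l:cohomo-comp-G^+/G^+_c}. By \cref{l:proet-Colmez-cover}, every sousperfectoid space admits a $v$-cover by strictly totally disconnected affinoid perfectoid spaces, so it suffices to check the statement on such $Y = \Spa(R,R^+)$.

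For such $Y$, I would first identify the sections on each side in terms of the formal model $\mG$. The left hand side gives $G(Y) = \mG(R^+)$, using the lemma immediately preceding the statement which identifies $G$ with the analytic sheafification of $(R,R^+)\mapsto \mG(R^+)$. For the right hand side, the key point is that when $Y$ is strictly totally disconnected every étale cover of $Y$ admits a section, so the étale sheafification in the definition of $\overline G_k$ does not change its value on $Y$. Hence $\overline G_k(Y) = \mG(R^+/p^k\m)$, and the map in the lemma becomes the natural reduction map
\[
\mG(R^+)\;\longrightarrow\;\varprojlim_{k\in\N}\mG(R^+/p^k\m).
\]

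It then remains to check that this reduction map is bijective. Since $\mG$ is locally of topologically finite presentation over $\Spf(K^+)$, a morphism into $\mG$ from an affine is determined by finitely many coordinates, so this follows once we know $R^+ = \varprojlim_k R^+/p^k\m$ as topological rings. Injectivity amounts to $\bigcap_k p^k\m R^+ = 0$, i.e.\ to the separatedness of the $\varpi$-adic topology on $R^+$ for any pseudouniformizer $\varpi$. For surjectivity, given a compatible system $(r_k)_k$, choose any lifts $s_k\in R^+$; then $s_{k+1}-s_k\in p^k\m R^+$ shows $(s_k)$ is Cauchy in the $\varpi$-adically complete ring $R^+$, and its limit is the required preimage.

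The main technical point is the identification $\overline G_k(Y) = \mG(R^+/p^k\m)$, which is precisely why the reduction to strictly totally disconnected $Y$ is convenient: there the étale sheafification is inert on sections, yielding a direct formula. Once this bookkeeping is in place, the result is a straightforward manifestation of the $\varpi$-adic completeness of $R^+$ together with the topological finite presentation of $\mG$.
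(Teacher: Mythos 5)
Your proof is correct, but it takes a genuinely different route from the paper's. You reduce to strictly totally disconnected affinoid perfectoid $Y = \Spa(R,R^+)$, where the \'etale (and analytic) sheafifications are inert on sections, so that both sides become the explicit objects $\mG(R^+)$ and $\varprojlim_k \mG(R^+/p^k\m)$; the isomorphism then follows from the $\varpi$-adic completeness of $R^+$ together with the formal-scheme adjunction $\Hom(\Spf R^+,\mG)=\varprojlim_k\Hom(\Spec R^+/p^k\m,\mG)$. The paper instead argues on an arbitrary perfectoid $(R,R^+)$: injectivity comes from $\bigcap_k G_k=1$ read off the explicit coordinate description of $G_k$, and surjectivity is obtained by invoking repleteness of the $v$-site to produce, inductively, a single $v$-cover $(S,S^+)$ over which each $x_k$ lifts to $\mG(S^+/p^k\m)$ compatibly, after which the same completeness argument applies. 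Both proofs ultimately rest on $\varpi$-adic completeness of $R^+$; your strictly-totally-disconnected reduction replaces the use of repleteness with the split-cover property, which makes the identification of both sides more explicit. One small point you should make explicit: you are using that $\O^+/p^k\m(Y)=R^+/p^k\m R^+$ for strictly totally disconnected $Y$, which needs $H^1_{\et}(Y,p^k\m\O^+)=0$ — true because $Y$ has trivial higher \'etale cohomology, but worth flagging.
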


\begin{proof}
	It is clear from \cref{l:explicit-description-of-G_k} that $\varprojlim_{k\in \N} G_k=\cap_{k\in \N} G_k=1$, so the map is injective. To see that it is surjective, suppose we are given a compatible system of elements $x_k\in G/G_k(R,R^+)$ for some perfectoid $(R,R^+)$. Using that the $v$-site is replete, we can inductively find a $v$-cover $(R,R^+)\to (S,S^+)$ such that the $x_k$ are in the image of $\mG(S^+/p^k\m)\to G/G_k(S,S^+)$ and form a compatible system of elements in $\mG(S^+/p^k\m)$. On the level of formal schemes, this defines an element $x\in\mG(S^+)$ whose image in $G(S,S^+)$ is a preimage of $(x_k)_{k\in \N}$.
\end{proof}

\begin{Remark}
	Due to Lemma~\ref{l:G^+-complete}, one could now generalise Faltings' notion of generalised representations by defining ``generalised $G$-representations'' to be compatible systems of $\overline G_{k}$-torsors on the \'etale site. Due to \cref{l:cohomo-comp-G^+/G^+_c} and \cref{l:G^+-complete}, one then sees exactly like in \cref{s:generalised-representations} that these are equivalent to $G$-torsors on $X_v$.
\end{Remark}

We can now identify  the subgroups that appeared in the context of the exponential with the integral subgroups $G_k$ of this section in the context of good reduction. 

\begin{Definition}\label{d:mg-plus_k}
	For any $0\leq k \in \log|K|$,
	we denote by $\mathfrak g_k^+\subseteq \mg$ the subsheaf $p^{k}\m \mg^+$, represented by an open rigid subgroup of $\mathfrak g$. We then set $\overline{\mg}^+_k:=\mg^+/\mg^{+}_k$ on $\Dmd_{K,v}$, the $v$-site of diamonds. Since $\mg^+$ is free, this is isomorphic as an $\O^+$-module to $(\O^+/p^{k}\m\O^+)^d$, where $d=\dim G$.
\end{Definition}

\begin{Lemma}\label{l:exp-vs-integral-subgroups}
	There is an $\alpha\geq 1/(p-1)$ such that for any $\alpha<k\in \log|K|$, the exponential of\, $G$ is defined on $\mg^+_k$ and restricts to an isomorphism of rigid spaces
	\[ \exp\colon\mg^+_k \longisomarrow G_k.\]
\end{Lemma}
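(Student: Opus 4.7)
The plan is to realise both $\mg^+_k$ and $G_k$ as explicit open polydiscs in coordinates compatible with the formal exponential, and then to invoke the classical convergence estimates for the $p$-adic exponential of a formal Lie group over $K^+$. Fix a $K^+$-basis $e_1,\dots,e_d$ of the free $K^+$-module $\mg^+$. This identifies $\mg\cong \G_a^d$ as rigid vector groups, under which $\mg^+_k=p^k\m\mg^+$ corresponds to the open polydisc $(p^k\m)^d$. Dually, choose coordinates $T_1,\dots,T_d$ on $\wh{\mathcal G}\cong\Spf K^+[[T_1,\dots,T_d]]$ so that the differentials $dT_i$ at the origin form the basis of $\mg^*$ dual to $e_i$. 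By \cref{l:explicit-description-of-G_k}, $G_k\hookrightarrow G_0=\wh{\mathcal G}^{\mathrm{ad}}_\eta$ is then exactly the rigid open subspace defined by $T_i\in p^k\m$.

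The key observation is that in these coordinates, the exponential $\exp$ of \cref{p:general-exponential} corresponds to the inverse of the formal isomorphism $\psi\colon F_{\mg}\isomarrow \wh\G_a\otimes\mg$ obtained from the Poincar\'e--Birkhoff--Witt theorem, while $\log$ corresponds to $\psi$ itself. Since $\psi$ is an isomorphism of formal schemes over $K$ (not $K^+$) that is the identity on tangent spaces, it is given by formal power series
\[
T_i \;=\; X_i+\sum_{|\alpha|\geq 2}c_{i,\alpha}X^\alpha,\qquad
X_i\;=\;T_i+\sum_{|\beta|\geq 2}d_{i,\beta}T^\beta,
\]
with $c_{i,\alpha},d_{i,\beta}\in K$ satisfying the universal bounds coming from the BCH expansion (cf.\ \cite{Bourbaki-Lie}, \cite[Proposition~17.6]{Schneider_Lie}): there is a constant $\alpha\geq 1/(p-1)$, depending only on $\mathcal G$ and the chosen basis, such that for every $k>\alpha$ both series converge on their respective polydiscs and the linear term dominates in absolute value.

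Concretely, for any $\alpha<k\in\log|K|$, the exp series converges on $\mg^+_k=(p^k\m)^d$ and its image satisfies $T_i\in p^k\m$, i.e.\ it lands in $G_k$; symmetrically, $\log$ converges on $G_k$ and maps into $\mg^+_k$. Being formally inverse power series, these two rigid-analytic maps are mutually inverse on the polydiscs in question, yielding the desired isomorphism of rigid spaces $\exp\colon\mg^+_k\isomarrow G_k$. Compatibility with the ambient $\exp\colon\mg^\circ\to G$ of \cref{p:general-exponential} follows because, after possibly enlarging $\alpha$, we have $\mg^+_k\subseteq\mg^\circ$ and the two maps agree where both are defined by uniqueness in \cref{p:general-exponential}.

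The main obstacle is making the threshold $\alpha$ intrinsic: in the familiar $\GL_n$ case of \cref{ex:exp}, the only denominators appearing in $\exp$ and $\log$ are $n!$ and $n$, yielding the clean bound $\alpha_0=1/(p-1)$; in general, $\psi^{-1}$ may introduce further denominators coming from the structure constants of $\mg$ in the integral basis $e_i$, which must be absorbed into $\alpha$. A convenient way to sidestep this is to use \cref{t:Lie-grp-Lie-alg-correspondence} to embed a sufficiently small open subgroup $G^\circ\subseteq G$ into some $\GL_n$ as a Zariski-locally closed subgroup, and then to deduce the claim from the explicit matrix estimates of \cref{ex:exp}, after checking that the preimages of the standard integral subgroups $1+p^k\m M_n(\O^+)$ and $p^k\m M_n(\O^+)$ in $G^\circ$ coincide with $G_k$ and $\mg^+_k$ for $k\gg 0$.
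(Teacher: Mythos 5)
Your proof is correct and follows essentially the same approach as the paper: both work in formal coordinates on $\wh{\mathcal G}$, observe that $\exp$ and $\log$ are mutually inverse power series over $K$ with linear leading term, and use the rescaling $T_i\mapsto p^k T_i'$ for $k\gg 0$ to make them integral, invoking \cref{l:explicit-description-of-G_k} to identify the adic generic fibres with $\mg^+_k$ and $G_k$. The $\GL_n$-embedding digression in your last paragraph is unnecessary, since the lemma only asserts the existence of some $\alpha$ (depending on $G$ and the chosen basis), not a canonical or optimal one.
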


\begin{proof}
	By \Cref{p:general-exponential},  the exponential $\mg^\circ\to G$ induces the identity on Lie algebras. It therefore corresponds to an isomorphism from $\mg^+_0$ to $G_0$ in the localised category of 
	\Cref{t:Lie-grp-Lie-alg-correspondence}.
	Its morphism of Lie algebras being the identity means that the associated morphism of formal Lie groups over $K$ is therefore of the form $F\colon K[[T_1,\dots,T_n]]\to K[[T_1,\dots,T_n]]$ with $F(T_i)=T_i+ (\text{terms of higher degree})$. As $F$ converges on some disc centred at $0$, this shows that after $T_i$ is replaced by $T_i'$ on both sides via $T_i\mapsto p^{k}T_i'$ for some $k\in \log|K|$, $k\gg 0$, the morphism $F$ restricts to $F\colon K^+[[T'_1,\dots,T'_n]]\to K^+[[T'_1,\dots,T'_n]]$. Passing to the adic generic fibre, we see from  \cref{l:explicit-description-of-G_k}\eqref{l:edGk-2} that the left-hand side becomes $\mg^+_k$ and the right-hand side becomes~$G_k$. Explicitly, we now define $\alpha$ to be the infimum of all $k>1/(p-1)$ with this property.
\end{proof}

\begin{Definition}
	Recall that  $\alpha_0:=1/(p-1)$ if $p>2$ and $\alpha_0=1/4$ otherwise.
	We denote by $\alpha$ the infimum of all $k\geq \alpha_0$ for which  \cref{l:exp-vs-integral-subgroups} holds. As before, we denote by $\log$ the inverse of $\exp$.
\end{Definition}

\begin{Lemma}\label{l:cong-preserves-mgk}
	For any $k> \alpha$, the adjoint action of\, $G_k$ preserves $\mg_k^+$.
\end{Lemma}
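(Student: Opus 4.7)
The plan is to combine the exponential isomorphism $\exp\colon \mg^+_k \isomarrow G_k$ of \cref{l:exp-vs-integral-subgroups} with the functoriality of $\exp$ applied to the homomorphism of rigid groups $\ad\colon G\to \GL(\mg)$.

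First, by \cref{p:general-exponential}.2 applied to $\ad$, after possibly shrinking the domain we have
\[ \ad \circ \exp_G = \exp_{\GL(\mg)} \circ\, \ad_{\mg}, \]
where $\ad_\mg\colon \mg\to \End(\mg)$, $y\mapsto [y,-]$, is the differential of $\ad$. Since $k>\alpha$, \cref{l:exp-vs-integral-subgroups} says that every $g\in G_k$ has the form $g=\exp(x)$ for a unique $x\in \mg^+_k$, so
\[ \ad(g)=\exp_{\GL(\mg)}([x,-]) .\]

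Second, I would show that the operator $T:=[x,-]$ lies in $p^k\m_K\cdot\End(\mg^+)$ as a rigid-analytic endomorphism. This follows because the Lie bracket on $\mg$ is obtained by differentiating the conjugation morphism $\mathcal G\times\mathcal G\to \mathcal G$ of formal group schemes over $K^+$, so it restricts to a $K^+$-bilinear map $[-,-]\colon \mg^+\times \mg^+\to \mg^+$. Hence $x\in \mg^+_k=p^k\m_K\mg^+$ implies $[x,\mg^+]\subseteq p^k\m_K\mg^+$, giving $T\in p^k\m_K\cdot \End(\mg^+)$.

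Third, since $k>\alpha\geq \alpha_0=1/(p-1)$, the classical matrix exponential series $\sum_{n\geq 0}T^n/n!$ converges in $\End(\mg^+)$ to an element of $1+p^k\m_K\cdot \End(\mg^+)$; by \cref{ex:exp}.3 (applied to $\GL(\mg)$ after picking a $K^+$-basis of $\mg^+$), this sum agrees with $\exp_{\GL(\mg)}(T)$. Such an operator is an $R^+$-linear automorphism of $\mg^+\otimes_{K^+}R^+$, hence preserves the sublattice $\mg^+_k\otimes_{K^+}R^+$, which is the desired conclusion on $(R,R^+)$-points.

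The main (relatively minor) obstacle is verifying that the intrinsically defined exponential $\exp_{\GL(\mg)}$ of \cref{p:general-exponential} agrees on $1+p^k\m_K\End(\mg^+)\subseteq \GL(\mg)$ with the classical matrix exponential series from \cref{ex:exp}.3; this boils down to choosing a $K^+$-basis of $\mg^+$, applying the explicit description, and noting the identification is independent of the basis by the uniqueness clause in \cref{p:general-exponential}.
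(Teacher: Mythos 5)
Your proof is correct and is essentially the paper's own argument: the proof there simply cites \cref{l:exp-vs-integral-subgroups} and \cref{l:exp-on-matrix}, i.e.\ it writes $g=\exp(x)$ with $x\in\mg^+_k$ and uses the compatibility of $\exp$ with $\ad$ together with the (implicit) integrality of the bracket on $\mg^+$, which are exactly the steps you spell out. The only point to add a word about is that \cref{p:general-exponential}.2 gives $\ad\circ\exp_G=\exp_{\GL(\mg)}\circ\ad_\mg$ only after an unspecified shrinking, so to invoke it on all of $\mg^+_k$ for every $k>\alpha$ you should note that both sides are rigid-analytic maps on the polydisc $\mg^+_k$ (the right-hand side converges there because $[x,-]\in p^k\m\End(\mg^+)$) which agree near $0$, hence agree everywhere --- the same kind of reduction used in the paper's proof of \cref{l:exp-on-matrix}.
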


\begin{proof}
	This follows from Lemmas~\ref{l:exp-vs-integral-subgroups} and~\ref{l:exp-on-matrix}.
\end{proof}

\begin{Lemma}\label{l:lifting-ses-for-general-G}
	For any $\alpha<r<s\in \Q$ with $s\leq 2r-\alpha_0\in \Q$, the exponential induces an isomorphism of abelian sheaves on $\Dmd_{K,v}$
	\[ \exp\colon\mg_{r}^+/\mg_{s}^+\longisomarrow G_{r}/G_{s}.\]
	In fact, we already have an isomorphism $\exp\colon \mg_{r}^+(X)/\mg_{s}^+(X)\isomarrow G_{r}(X)/G_{s}(X)$ for any $X\in \Sous_{K}$.  We thus get a short exact sequence on $\Dmd_{K,v}$ $($in fact, already on $\Sous_{K,\et})$
	\[ 0\lra \overline\mg^+_{s-r}\xrightarrow{\exp} \overline{G}_{s}\lra \overline{G}_{r}\lra 1.\]
\end{Lemma}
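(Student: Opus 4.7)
The plan is to derive the isomorphism from a Baker--Campbell--Hausdorff estimate, and then bootstrap the short exact sequence from \cref{l:ses-of-G_k}. By \cref{l:exp-vs-integral-subgroups}, the map $\exp:\mg^+_k(X)\isomarrow G_k(X)$ is an isomorphism of sets for every $k>\alpha$ and every sousperfectoid $X$, so I automatically obtain a bijection of sets $\mg^+_r(X)/\mg^+_s(X)\to G_r(X)/G_s(X)$. The content of the first displayed isomorphism is that this bijection respects the group structures; equivalently, that $\BCH$ is ``additive modulo $\mg^+_s$'' on $\mg^+_r$.

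The key input is the Dynkin bound $v_p(c_n)\geq -(n-1)\alpha_0$ on the coefficient of a degree-$n$ iterated bracket in $\BCH$. Combined with the fact that for $x_i\in\mg^+_r$ the iterated bracket $[x_1,\dots,x_n]$ lies in $p^{nr}\m^n\mg^+$ (after enlarging $\alpha$ so that $[-,-]$ is integral on $\mg^+$), each term $c_n[x_1,\dots,x_n]$ lands in $p^{nr-(n-1)\alpha_0}\m\mg^+$; since $r>\alpha_0$ and $s\leq 2r-\alpha_0$, for $n\geq 2$ this lies in $\mg^+_s$. Summing gives
\[
\BCH(x,y)=x+y+h(x,y),\qquad h(x,y)\in\mg^+_s\quad\text{for all }x,y\in\mg^+_r(X).
\]

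To verify the homomorphism property, I compute using the diagram of \cref{p:general-exponential}:
\[
\exp(x+y)^{-1}\exp(x)\exp(y)=\exp(-(x+y))\exp(\BCH(x,y))=\exp(\BCH(a,b)),
\]
where $a:=-(x+y)\in\mg^+_r$ and $b:=\BCH(x,y)=-a+h\in\mg^+_r$ with $h\in\mg^+_s$. Multilinearity of $[-,-]$ combined with $[a,-a]=0$ shows that every iterated bracket in $\BCH(a,b)$ of degree $\geq 2$ is a sum of iterated brackets each containing at least one factor of $h\in\mg^+_s$; the Dynkin estimate then places all these terms in $\mg^+_s$. Since the linear part $a+b=h$ also lies in $\mg^+_s$, we conclude $\BCH(a,b)\in\mg^+_s$, hence $\exp(\BCH(a,b))\in G_s$ by \cref{l:exp-vs-integral-subgroups}. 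This proves that $\exp$ descends to a group isomorphism $\mg^+_r(X)/\mg^+_s(X)\isomarrow G_r(X)/G_s(X)$. The short exact sequence now follows formally: applying \cref{l:ses-of-G_k} for $k\in\{r,s\}$ and dividing out, the surjection $\overline G_s\twoheadrightarrow\overline G_r$ has kernel $G_r/G_s$, and inserting the inverse $\log$ together with the natural identification of $\mg^+_r/\mg^+_s$ with $\overline\mg^+_{s-r}$ yields the displayed sequence on $\Sous_{K,\et}$.

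The main obstacle is the careful $p$-adic tracking in the BCH estimate; in particular, the bound $h(x,y)\in\mg^+_{2r-\alpha_0}$ requires enlarging $\alpha$ once at the outset to absorb both the integrality condition $[\mg^+,\mg^+]\subseteq\mg^+$ and the denominators in the BCH series. Once this is done, the argument reduces to routine $p$-adic valuation arithmetic, and the hypothesis $s\leq 2r-\alpha_0$ is exactly what is needed to swallow the BCH tail into $\mg^+_s$.
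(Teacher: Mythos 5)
Your argument is correct and is essentially the paper's: the entire content is the estimate $\BCH(x,y)\equiv x+y\bmod \mg^+_s$ for $x,y\in\mg^+_r$, obtained from the same bound $\|H_n\|\leq |p|^{-(n-1)\alpha_0}$ on the degree-$n$ BCH terms (the paper's \cref{l:estimate-on-BCH}, citing Schneider), with the quotient identification and the short exact sequence then formal consequences as you say. The only difference is cosmetic: the paper deduces the isomorphism of quotient groups directly by observing that $\exp$ transports the $\BCH$ group law on $\mg^+_r$ to the group law of $G_r$, so it suffices to compare $\BCH$ with addition modulo $\mg^+_s$, whereas you additionally re-expand $\BCH(a,b)$ with $a=-(x+y)$, $b=\BCH(x,y)$ to verify $\exp(x+y)^{-1}\exp(x)\exp(y)\in G_s$ by hand (this also supplies the well-definedness that your opening ``automatic bijection of sets'' claim glosses over).
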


\begin{Remark}
	For $G=\GL_n$, this is the natural isomorphism
	\[ M_n(p^r\m\O^+/p^{s}\m)\longisomarrow 1+p^r\m M_n(\O^+)/1+p^{s}\m M_n(\O^+),\quad x\longmapsto 1+x\]
	which coincides with the exponential because the conditions on $r$ and $s$ ensure that 
	\[x^2/2!+x^3/3!+\dots \in p^{s}\O^+\quad \text{for any }x\in p^r\O^+\]
	by the usual estimate $v_p(x^n/n!)> nv_p(x)-\tfrac{n-1}{p-1}$. The analogue holds for linear algebraic groups.
\end{Remark}

\begin{proof}
  By the commutative diagram in \cref{p:general-exponential}, $\BCH$ defines on $\mathfrak g^+_{r}$ a rigid group structure isomorphic to $G_{r}$. It suffice to prove that this agrees with the additive group structure on the quotient $\mathfrak g^+_{r}/\mathfrak g^+_{s}$. This is guaranteed by the following lemma.
\end{proof}

\begin{Lemma}\label{l:estimate-on-BCH}
		For any $r> \alpha$, let $x,y\in \mathfrak g^+_{r}$. Then $\BCH(x,y)$ converges in $\mathfrak g^+_{r}$, and
		\[\BCH(x,y)\equiv x+y\bmod \mathfrak g^+_s\] 
		for any $r< s<2r-\alpha_0$. 
	\end{Lemma}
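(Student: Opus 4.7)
The plan is to expand
\[ \BCH(x,y) = x + y + \sum_{n \geq 2} c_n(x,y), \]
where $c_n(x,y)$ is the homogeneous Lie polynomial of total degree $n$ in $x, y$ with rational coefficients, and then to estimate each summand separately. The task reduces to showing that $c_n(x,y) \in \mg^+_s$ for every $n \geq 2$, which simultaneously yields the convergence statement inside $\mg^+_r$.

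Two estimates combine to give this. First, because $\mg^+ = \operatorname{Lie}(\mG)$ is the Lie algebra of a smooth formal group scheme over $K^+$, the Lie bracket restricts to a $K^+$-bilinear map $\mg^+ \times \mg^+ \to \mg^+$. Consequently every iterated bracket of total degree $n$ in $x, y \in \mg^+_r = p^r \m \mg^+$ lies in $p^{nr} \m^n \mg^+ \subseteq \mg^+_{nr}$. Second, I invoke the classical $p$-adic denominator bound for the BCH coefficients: the rational coefficients of $c_n$ have $p$-adic valuation at least $-(n-1)\alpha_0$ (cf.\ \cite{Bourbaki-Lie} or the estimates underlying \cite[Proposition~17.6]{Schneider_Lie}). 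Combining the two yields
\[ c_n(x,y) \in \mg^+_{nr - (n-1)\alpha_0} = \mg^+_{r + (n-1)(r - \alpha_0)}. \]

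Because $r > \alpha \geq \alpha_0$, the exponent $r + (n-1)(r - \alpha_0)$ grows linearly in $n$, so $\BCH(x,y)$ converges in $\mg^+_r$. Moreover, this exponent is nondecreasing in $n \geq 2$ and attains its minimum value $2r - \alpha_0$ at $n = 2$. Hence the assumption $s < 2r - \alpha_0$ forces $c_n(x,y) \in \mg^+_s$ for every $n \geq 2$, and summing the tail of the series gives the desired congruence.

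The main obstacle is the coefficient bound $v_p(c_n) \geq -(n-1)\alpha_0$ itself, which is classical but somewhat delicate: the naive estimate $v_p(c_n) \geq -v_p(n \cdot n!)$ extractable from Dynkin's formula is too weak. The sharper bound is precisely the input that made $\BCH$ converge on $\mg^\circ$ in the proof of \cref{p:general-exponential}, so for our purposes it can simply be imported from that earlier analysis, without introducing any new ingredient beyond what is already cited from \cite{Bourbaki-Lie} and \cite{Schneider_Lie}.
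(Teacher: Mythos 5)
Your proof is correct and follows essentially the same route as the paper: both decompose $\BCH$ into homogeneous components, import the coefficient estimate $\|H_n\|\leq|p|^{-(n-1)\alpha_0}$ from Schneider/Bourbaki, and combine it with the fact that an $n$-fold bracket of elements of $\mg^+_r$ lands in $\mg^+_{nr}$ to conclude $H_n(x,y)\in\mg^+_{nr-(n-1)\alpha_0}\subseteq\mg^+_s$ for all $n\geq 2$. The only (harmless) difference is that you spell out explicitly that $[\cdot,\cdot]$ is $K^+$-bilinear on $\mg^+$, which the paper leaves implicit.
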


\begin{proof}
		This follows from the estimates in \cite[Equation~(26) and Proposition~17.6]{Schneider_Lie}: we have $\BCH=\sum_{n\geq 1} H_n$, where $H_n(X,Y)$ is homogeneous of degree $n$, and 
		$\|H_n\|\leq |p|^{-(n-1)\alpha_0}$. Let us  define a valuation $v_p$ on $\mg$ via $v_p(z):=\sup\{\log_{|p|}|a|:a\in K \text{ s.t. }z\in a\cdot \mathfrak g^+\}$ for $z\in \mathfrak g$. Then since $x,y\in \mathfrak g_r^+$ satisfy $v_p(x),v_p(y)>r$,
		it follows that for $n\geq 2$, we have
		\[v_p(H_n(x,y))> nr-(n-1)\alpha_0= n(r-\alpha_0)+\alpha_0\geq 2r-\alpha_0\geq  s.\]
	Thus 
		\[ \BCH(x,y)=x+y+H_2(x,y)+\dots\in x+y+p^s\m\mathfrak g^+\]
		is of the desired form.
	\end{proof}

\subsection{$\boldsymbol{G}$-torsors on perfectoid spaces}
Given our technical preparations, we can now generalise the results of \cref{s:generalised-representations}. First,
by a generalisation of \cref{l:local-freeness--in-the-limit-in-the-v-topology}, small $G$-torsors on affinoid perfectoid $Z$ are trivial. 

\begin{Lemma}\label{l:G^+-torsors-on-perfectoids}
	
	Let $X$ be any diamond over $K$ with $H^1_v(X,\m\O^+/p^\epsilon\m)=0$ for any $\epsilon>0$, for example any affinoid perfectoid space, and let $k>\alpha$. Then a $G$-torsor $V$ on $X_v$ is trivial if and only if the associated $\overline{G}_{k}$-torsor $V_k$ on $X_v$ is trivial. In particular,
	\[ H^1_v(X,G_k)=1.\]
\end{Lemma}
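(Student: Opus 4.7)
The plan is to reduce the biconditional to the vanishing $H^1_v(X,G_k)=1$, and to establish the latter by inductively lifting along a tower $G_k\supseteq G_{k_1}\supseteq G_{k_2}\supseteq\cdots$ whose successive quotients are abelian $v$-sheaves to which the hypothesis applies. From the short exact sequence $1\to G_k\to G\to \overline G_k\to 1$ of \cref{l:ses-of-G_k}, a $G$-torsor $V$ has trivial associated $\overline G_k$-torsor $V_k$ if and only if $V$ admits a reduction of structure group to $G_k$; hence both implications of the biconditional (and the ``in particular'' statement) reduce to $H^1_v(X,G_k)=1$. To set up the tower, fix $0<\epsilon\le k-\alpha_0$ and let $k_n:=k+n\epsilon$, so that $k_n>\alpha$ and $k_{n+1}\le 2k_n-\alpha_0$ for all $n\ge 0$.

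By \cref{l:lifting-ses-for-general-G}, for each $n$ we have a short exact sequence of sheaves of groups on $X_v$
$$1\to A_n\to G_k/G_{k_{n+1}}\to G_k/G_{k_n}\to 1,$$
where $A_n:=G_{k_n}/G_{k_{n+1}}$ is the abelian subquotient isomorphic via $\exp$ to $\mg^+_{k_n}/\mg^+_{k_{n+1}}$. Multiplication by $p^{-k_n}$ identifies this abelian sheaf with $(\m\O^+/p^{\epsilon}\m)^d$ where $d=\dim G$, so the hypothesis gives $H^1_v(X,A_n)=0$ for every $n$. I then induct on $n$ to show $H^1_v(X,G_k/G_{k_n})=1$. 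The case $n=0$ is trivial, and the inductive step follows from the exact sequence of pointed sets
$$H^1_v(X,A_n)\to H^1_v(X,G_k/G_{k_{n+1}})\to H^1_v(X,G_k/G_{k_n})$$
associated to the displayed SES: the left term vanishes and the right is trivial by induction, so the middle is trivial.

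To conclude, invoke \cref{l:G^+-complete} to write $G_k=\varprojlim_n G_k/G_{k_n}$, so that for any $G_k$-torsor $V$ on $X_v$ one has $V=\varprojlim_n V_n$ with $V_n:=V/G_{k_n}$ (checked $v$-locally on a trivialization of $V$). Each $V_n$ is trivial by the previous step, hence admits a section over $X$. I build a compatible system $s_n\in V_n(X)$ by induction on $n$: given $s_n$, the fibre of $V_{n+1}\to V_n$ over $s_n$ is an $A_n$-torsor on $X$, which is trivial since $H^1_v(X,A_n)=0$, and thus admits a section $s_{n+1}$ above $s_n$. The resulting $s=(s_n)_n\in\varprojlim_n V_n(X)=V(X)$ trivializes $V$.

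The main subtlety is correctly identifying $A_n$ as $(\m\O^+/p^{\epsilon}\m)^d$ so that the hypothesis applies: the sheaf $\overline{\mg}^+_{s-r}$ appearing in \cref{l:lifting-ses-for-general-G} must be interpreted via the shift $\mg^+_r/\mg^+_s\xrightarrow{p^{-r}}\m\mg^+/\mg^+_{s-r}$ to match the sheaf in the hypothesis. Once this bookkeeping is handled, the inverse-limit step is routine, given repleteness of the $v$-site.
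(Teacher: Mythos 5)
Your proposal is correct and takes essentially the same approach as the paper: both hinge on the short exact sequences from Lemma~\ref{l:lifting-ses-for-general-G}, inductively lift trivializations through the tower of successive abelian quotients (which the hypothesis kills), and then pass to the inverse limit via \cref{l:G^+-complete}. The only difference is organizational: you reduce everything to $H^1_v(X,G_k)=1$ up front and build a compatible system of sections by hand, whereas the paper proves the biconditional directly via a Milnor/Mittag-Leffler exact sequence for $\varprojlim_s\overline{G}_s(X)$ and then deduces $H^1_v(X,G_k)=1$ from the long exact sequence for $1\to G_k\to G\to\overline{G}_k\to 1$; these are two phrasings of the same limiting argument.
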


\begin{proof}
	We set $r:=k>\alpha>\alpha_0$ and let $s\in \mathbb R$ be such that $r<s<2r-\alpha_0$.
	We then apply $H^1_v(X,-)$ to the short exact sequence of $v$-sheaves in Lemma~\ref{l:lifting-ses-for-general-G}. Since $\overline\mg_{s-r}^+=(\m\O^+/p^{s-r}\m)^{d}$, it follows from the assumption that $H^1_v(X,\overline\mg_{s-r}^+)=0$. This shows that if $V_k$ is trivial, then so is $V_s$. Replacing $k$ by $s$ and arguing inductively, this shows that $V_s$ is trivial for any $s>k$. The same long exact sequence shows inductively that any element in $\overline{G}_{k}(X)$ can be lifted to $\overline{G}_{s}(X)$ for any $s>k$.
	
	We then use that we have a Milnor exact sequence of pointed sets
	\[ 1\lra R^1\lim_s \overline{G}_{s}(X)\lra H^1_v(X,G)\lra \varprojlim_s H^1_v(X,\overline{G}_{s}).\]
	Since we have just seen that $(\overline{G}_{s}(X))_{s\in \N}$ is a Mittag-Leffler system, the first term vanishes. This shows that an element of $H^1_v(X,G)$ vanishes if and only if its image in $\varprojlim_s H^1_v(X,\overline{G}_{s})$ vanishes, which holds if and only if its image in $H^1_v(X,\overline{G}_{k})$ vanishes. This shows the first part.
	
	By \cref{l:G^+-complete}, we can also deduce that the map $G(X)\to \overline{G}_{k}(X)$ is surjective.
	The last sentence then follows from the long exact sequence of $H^0_{v}(X,-)$  associated to the short exact sequence of \cref{l:ses-of-G_k}.
\end{proof}

We can now prove the main theorem of this article, generalising Kedlaya--Liu's \cref{t:loc-free-O-modules} and
\cref{t:loc-free-O^+-modules}, which were the cases of $G=\GL_n$ and $G=\GL_n(\O^+)$. 

\begin{Theorem}\label{t:G-torsors-on-perfectoids}
	Let $X$ be a perfectoid space, and let $G$ be any rigid group over $K$. Then the categories of\, $G$-torsors on $X_{\et}$ and $G$-torsors on $X_{v}$ are equivalent.
\end{Theorem}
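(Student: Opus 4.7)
The plan is to establish the equivalence by checking full faithfulness and essential surjectivity of the pullback functor $\nu^{\ast}$ separately, invoking the reduction-of-structure-group result \cref{p:reduction-of-structure-group-to-open-subgroup} together with the good-reduction vanishing \cref{l:G^+-torsors-on-perfectoids}. Almost all the conceptual work has already been done in the preparations; the theorem itself should be a clean two-step combination.

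\textbf{Full faithfulness.} This is already \cref{p:fully-faithful-G-torsors-et-to-v}, which I would cite directly. The point is that both categories satisfy descent in their respective topologies, so it suffices to compare $\Hom$-sets between the trivial torsors, where they reduce to $G$-sections. Since $G$ is identified with its diamond $G^\diamondsuit$ in the setup, it is already a $v$-sheaf, so $\nu_\ast G = G$ on $X_\et$ and the comparison map is an isomorphism; full faithfulness then follows by étale-local trivialization.

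\textbf{Essential surjectivity.} This is the substantial direction. Let $V$ be a $G$-torsor on $X_v$. Choose any $k \in \log|K|$ with $k > \alpha$, so that by \cref{l:exp-vs-integral-subgroups} the subgroup $G_k \subseteq G$ is a normal rigid open subgroup of good reduction. By \cref{p:reduction-of-structure-group-to-open-subgroup}, there exist an étale cover $Y \to X$ and a $G_k$-torsor $W$ on $Y_v$ with a $G$-equivariant isomorphism $W \times^{G_k} G \cong V|_Y$. Since $X$ is perfectoid and étale morphisms from perfectoid spaces are perfectoid, we may refine $Y$ to be affinoid perfectoid without loss of generality. Then \cref{l:G^+-torsors-on-perfectoids} gives $H^1_v(Y, G_k) = 1$, so $W$ is trivial on $Y_v$, and hence so is $V|_Y$ as a $G$-torsor. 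This produces the required étale-local trivialization of $V$, which therefore descends to an étale $G$-torsor on $X$.

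The main obstacle, in the sense of where the real content lies, is not in this two-step argument but in its two inputs. The existence of an étale-local reduction to the small integral neighbourhood $G_k$ (which is not obvious in the non-commutative setting, where $G/G_k$ is only a sheaf of cosets) is guaranteed by the approximation property for $G/U$ proved in \cref{p:approx-property-for-G/U} and packaged in \cref{p:reduction-of-structure-group-to-open-subgroup}. The triviality of small $G_k$-torsors on affinoid perfectoids then rests on the successive short exact sequences $0 \to \overline{\mg}^+_{s-r} \to \overline G_s \to \overline G_r \to 1$ of \cref{l:lifting-ses-for-general-G}, combined with the almost vanishing of $H^1_v$ with coefficients in $\m\O^+/p^\epsilon\m$. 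The one point requiring minor care is that the cover $Y$ supplied by \cref{p:reduction-of-structure-group-to-open-subgroup} be taken affinoid perfectoid rather than merely sousperfectoid, so that \cref{l:G^+-torsors-on-perfectoids} applies; but this is automatic by refining étale-locally on the perfectoid base $X$.
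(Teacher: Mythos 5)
Your overall strategy coincides with the paper's: reduce the structure group to a small integral open subgroup via \cref{p:reduction-of-structure-group-to-open-subgroup}, then kill the resulting torsor on an affinoid perfectoid \'etale cover via \cref{l:G^+-torsors-on-perfectoids}. However, as written there is a gap in the essential surjectivity step: you invoke $G_k$ and the threshold $\alpha$ for an arbitrary rigid group $G$, but these are only defined in \cref{s:integral-G} after fixing a smooth formal model $\mathcal G$ of $G$ over $\Spf(K^+)$, i.e.\ under the standing assumption that $G$ has good reduction. For a general rigid group $G$ there is no $\mathcal G$, hence no $\mathfrak g^+$, no $\alpha$, and no $G_k$; \cref{l:exp-vs-integral-subgroups} does not produce them for you, it presupposes them.

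The fix is the paper's two-step reduction. First use \cref{c:neighbourhood-basis-good-reduction} to choose a rigid open subgroup $U \subseteq G$ of good reduction, and apply \cref{p:reduction-of-structure-group-to-open-subgroup} to reduce the structure group of $V$ to $U$ on an \'etale cover; this reduces the theorem to the case $G = U$ of good reduction, at which point $\alpha$ and the $G_k$ are well-defined. Then apply \cref{p:reduction-of-structure-group-to-open-subgroup} a second time to reduce further to $G_k$ with $k > \alpha$, and conclude with \cref{l:G^+-torsors-on-perfectoids} exactly as you do. With this adjustment the argument matches the paper's proof. One small remark in your favour: you are right that the refinement to an affinoid perfectoid cover is automatic over a perfectoid base, and your remark on full faithfulness via \cref{p:fully-faithful-G-torsors-et-to-v} is fine, though the paper phrases everything in terms of $R^1\nu_\ast$, which already encodes essential surjectivity with full faithfulness taken for granted from that proposition.
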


\begin{proof}
	By \cref{c:neighbourhood-basis-good-reduction}, there exists a rigid open subgroup $U\subseteq G$ which has good reduction. By \cref{p:reduction-of-structure-group-to-open-subgroup}, the map
	$R^1\nu_{\ast}U\to R^1\nu_{\ast}G$
	is surjective, so it suffices to prove that $R^1\nu_{\ast}U=1$. 
	We have thus reduced to the case that $G$ has good reduction. By the same argument, we may then further replace $G$ by the open subgroup $G_k$ for $k>\alpha$. Then by \cref{l:G^+-torsors-on-perfectoids}, we have $R^1\nu_{\ast}G_k=1$.
\end{proof}

\begin{Corollary}\label{c:G-torsor-is-diamond}
	Let $X$ be any adic space over $K$. Then the categories of\, $G$-torsors on $X_{\qproet}$ and  $X_{v}$ are equivalent. Moreover, any geometric $G$-torsor on $X_v$ is a diamond.
\end{Corollary}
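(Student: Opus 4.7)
The plan is to deduce the corollary by combining \Cref{t:G-torsors-on-perfectoids} with the existence of quasi-pro-étale perfectoid covers from \Cref{l:proet-Colmez-cover}. The morphism of sites $\mu\colon X_v \to X_\qproet$ gives by pullback a natural functor from $G$-torsors on $X_\qproet$ to $G$-torsors on $X_v$, analogous to the one in \Cref{p:fully-faithful-G-torsors-et-to-v}. Full faithfulness reduces to checking that for any $Y \in X_\qproet$ one has $G(Y_\qproet) = G(Y_v)$, i.e.\ that $G$ as a $v$-sheaf extends $G$ as a qproét sheaf; this is automatic since $G$ is representable (as a rigid space, hence as a locally spatial diamond), so it already satisfies $v$-descent.

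For essential surjectivity, let $E$ be a $v$-$G$-torsor on $X$. By \Cref{l:proet-Colmez-cover}.2 there is a quasi-pro-étale cover $\wt X \to X$ with $\wt X$ strictly totally disconnected, hence affinoid perfectoid. By \Cref{t:G-torsors-on-perfectoids}, $E|_{\wt X}$ is already an \'etale $G$-torsor on $\wt X$; but every \'etale cover of a strictly totally disconnected space splits, so any \'etale $G$-torsor on $\wt X$ is trivial. Thus $E|_{\wt X}$ is trivial, which exhibits $E$ as quasi-pro-\'etale-locally trivial. Since $E$ is in particular a $v$-sheaf with compatible descent data for the qproét cover $\wt X \to X$, it descends to a $G$-torsor on $X_\qproet$.

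For the diamond claim, let $E \to X$ be a geometric $G$-torsor in the $v$-topology. The same argument shows that there is an isomorphism of $v$-sheaves $E \times_X \wt X \cong G \times \wt X$ over $\wt X$. The map $E \times_X \wt X \to E$ is the base change along $E \to X$ of the quasi-pro-étale surjection $\wt X \to X$, hence is itself a quasi-pro-étale surjection of $v$-sheaves. Since $G$ is a (locally spatial) diamond, we may further choose a pro-étale surjection $Y_G \to G$ from a perfectoid space, and combining gives a quasi-pro-étale surjection $Y_G \times \wt X \to E$ from a perfectoid space. By the criterion for a $v$-sheaf to be a diamond (\cite[Proposition~11.3]{etale-cohomology-of-diamonds}), this shows that $E$ is a diamond.

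The main subtlety is verifying that the transition from a $v$-local trivialisation of $E$ (which is only part of the definition of a geometric torsor) to the qproét trivialisation just produced proceeds cleanly enough to assemble a quasi-pro-étale surjection to $E$ from a perfectoid space; here the key input is \Cref{t:G-torsors-on-perfectoids} applied on the strictly totally disconnected space $\wt X$, which replaces the a priori $v$-local trivialisation with a global one on $\wt X$, and the stability of quasi-pro-étale surjections of $v$-sheaves under base change.
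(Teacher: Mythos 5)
Your proposal is correct and essentially matches the paper's proof: both reduce to a quasi-pro-\'etale perfectoid cover from \cref{l:proet-Colmez-cover}, invoke \cref{t:G-torsors-on-perfectoids} to trivialize $E$ there, and then use the resulting quasi-pro-\'etale surjection onto $E$ from a (sous)perfectoid space to conclude it is a diamond. The only cosmetic difference is that the paper uses the pro-finite-\'etale Galois cover $X'$ from \cref{l:proet-Colmez-cover}.1 and stops at the sousperfectoid space $G\times X'$, whereas you use the strictly totally disconnected cover $\wt X$ from \cref{l:proet-Colmez-cover}.2 and further pass to a perfectoid cover $Y_G\times\wt X$ before citing the diamond criterion.
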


\begin{proof}
	The first part follows since by \cref{l:proet-Colmez-cover}, the space $X$ admits a quasi-pro-\'etale cover by a perfectoid space. For the second part,
	 let $E$ be a geometric $G$-torsor on $X_v$. The statement is \'etale-local on $X$, so we may assume that $X$ has a pro-finite-\'etale perfectoid Galois cover $X'\to X$ such that $E\times_XX'\cong G\times X'$. Then the projection $G\times X'\to E$ is a pro-finite-\'etale cover of $E$ by a sousperfectoid space. Hence $E$ is a diamond. 
\end{proof}


\newcommand{\etalchar}[1]{$^{#1}$}

\end{document}